\let\@internalcite\cite
\def\cite{\def\citeauthoryear##1##2{##1, ##2}\@internalcite}
\def\shortcite{\def\citeauthoryear##1{##2}\@internalcite}
\def\@biblabel#1{\def\citeauthoryear##1##2{##1, ##2}[#1]\hfill}
\newcommand{\R}{\mathbb{R}}
\newcommand{\quant}{\mathsf{Q}}
\newcommand{\alg}{\mathcal{A}}
\newcommand{\Xcal}{\mathcal{X}}
\newcommand{\norm}[1]{\left\lVert#1\right\rVert}
\DeclareMathOperator*{\argminB}{argmin}   
\newtheorem{example}{Example}
\newtheorem{theorem}{Theorem}
\newtheorem{lemma}{Lemma}
\newtheorem{proposition}[theorem]{Proposition}
\newtheorem{corollary}[theorem]{Corollary}
\newtheorem{definition}{Definition}
\newtheorem{remark}{Remark}
\newtheorem{assumption}{Assumption}
\title{Uncertainty quantification in metric spaces}
\author[1,2,3]{Gábor Lugosi}
\author[4,5]{Marcos Matabuena\footnote{mmatabuena@hsph.harvard.edu}}
\affil[1]{Department of Economics and Business, Pompeu Fabra University, Barcelona, Spain}
\affil[2]{ICREA, Pg. Llu\'is Companys 23, 08010 Barcelona, Spain}
\affil[3]{Barcelona Graduate School of Economics}
\affil[4]{Department of Biostatistics, Harvard University, Boston, MA 02115, USA}
\affil[5]{Universidad de Santiago de Compostela}
\begin{document}
	\maketitle
	
	\begin{abstract}
		This paper introduces a novel uncertainty quantification framework for regression models where the response takes values in a separable metric space, and the predictors are in a Euclidean space. The proposed algorithms can efficiently handle large datasets and are agnostic to the predictive base model used. Furthermore, the algorithms possess asymptotic consistency guarantees and, in some special homoscedastic cases, we provide non-asymptotic guarantees. To illustrate the effectiveness of the proposed uncertainty quantification framework, we use a linear regression model for metric responses (known as the global Fréchet model) in various clinical applications related to precision and digital medicine. The different clinical outcomes analyzed are represented as complex statistical objects, including multivariate Euclidean data, Laplacian graphs, and probability distributions.
	\end{abstract}
\noindent		\textbf{Keywords:} Uncertainity Quantification; Metric Spaces; Conformal Prediction; Fréchet Mean; Precision medicine applications.

	\section{Introduction}

	\subsection{Motivation and goal}
	
\noindent	The increasing use of statistical and machine learning algorithms as predictive tools is transforming  \cite{Banerji2023, HAMMOURI2023, rodriguez2022contributions}, and digital markets. Hence, it has become more critical than ever to conduct an uncertainty analysis to create trustworthy predictive models and validate their usefulness \cite{romano2019conformalized}. Data analysts tend to focus on pointwise estimation through the conditional mean between a response and a set of predictors, neglecting other crucial aspects of the conditional distribution between the involved random variables \cite{kneib2021rage}. In order to quantify the uncertainty of the point estimates, we need to estimate other characteristics of the conditional distribution beyond the conditional mean.

\noindent An innovative approach to tackle uncertainty quantification problems involves employing the conformal inference framework, first introduced by Gammerman, Vovk, and Vapnik in 1998 \cite{alex1998learning}. Building upon this foundation,  Vovk, Gammerman, and Shafer  have made significant contributions to the field \cite{vovk2005algorithmic}. Conformal inference allows one to construct prediction sets with non-asymptotic guarantees, setting it apart from other methods that only offer asymptotic guarantees. 

\noindent However, despite its attractive properties and advantages, conformal inference does have some limitations in different settings that include:

\begin{enumerate}
	\item \textbf{Computational Complexity:} The implementation of conformal inference methods can be computationally expensive, especially if data-splitting strategies to estimate the underlying regression model and prediction region (referred to as \emph{conformal-split} in the literature) are not considered \cite{vovk2018cross, solari2022multi}.
	
	\item \textbf{Conservative Intervals:} The method tends to produce conservative intervals if the regression function used is not well-calibrated with respect to the underlying model, or if the conformal inference method is mis-specified \cite{lu2023data} (e.g., using a homoscedastic model when the underlying conditional distribution function between the random response variable $Y$ and the random predictor $X$ is governed by heteroscedastic random errors).
	
	\item \textbf{Limited Applicability:} Conformal inference may have limited applicability, particularly in scenarios with multivariate responses or when the observed data is extracted from complex survey designs where the exchangeability hypothesis can be violated (see \cite{barber2022conformal}, who quantify the impact of this fact in terms of the total variation distance between distributions).
	
	\item \textbf{Asymptotic Guarantees:} In some special cases, other methods may provide stronger asymptotic guarantees compared to the algorithms derived for the conformal inference framework (see \cite{gyofi2020nearest} for a discussion).
\end{enumerate}

\noindent In the context of precision and digital medicine, clinical outcomes can take on complex statistical forms such as probability distributions or graphs \cite{rodriguez2022contributions}, and there is no general methodology available to perform uncertainty analysis in such settings. For instance, in the case of a glucose time series, a modern approach to summarizing the glucose profiles involves using a distributional representation of the time series, such as their quantile functions. As prior research has shown, these new representations \cite{ghosal2023multivariate} can capture information about glucose homeostasis metabolism that traditional diabetes biomarkers cannot measure \cite{doi:10.1177/0962280221998064,matabuena2022kernel, 10.1093/biostatistics/kxab041}.

\noindent This paper proposes a general method to quantify uncertainty in regression modeling for responses taking values in separable metric spaces \cite{AIHP_1948__10_4_215_0, petersen2019frechet, schotz2021frechet} and Euclidean predictors. The new algorithms can handle large datasets efficiently, work independently of the predictive base model fitted, and offer asymptotic consistency guarantees. We demonstrate the usefulness and applicability of the novel uncertainty quantification framework in practice through several examples in digital and precision medicine.

\noindent	Below, we introduce the notation and mathematical concepts to define the new uncertainty quantification framework.

	\subsection{Notation and problem definition}\label{sec:def}
	\noindent Let $(X,Y)\in \mathcal{X} \times \mathcal{Y}$ be a pair of random variables that play the role of the predictor and response variable in a regression model.  We assume that	$\mathcal{X}= \mathbb{R}^{p}$ and $\mathcal{Y}$ is a  separable metric space equipped with a distance $d_1$.  We assume that there exists $y\in \mathcal{Y}$ such that $\mathbb{E}(d_{1}^{2}(Y,y)) < \infty$. The regression function $m$ is defined as
	
	\begin{equation}
	m(x)= \argminB_{y\in \mathcal{Y}} \mathbb{E}(d_{1}^{2}(Y,y)|X=x),
	\end{equation}
	
	\noindent	where $x\in \mathbb{R}^{p}$.  In other words, $m$ is the conditional Fréchet mean \cite{petersen2019frechet}. For simplicity, we assume that the minimum in $(1)$ is achieved for each $x$ and moreover, the conditional Fréchet mean of $Y$ given $X=x$, is unique. We note here that one may also consider the conditional Fréchet median obtained by $\argminB_{y\in \mathcal{Y}}\mathbb{E}(d_{1}(Y,y)|X=x).$ However, this is a special case of our setup obtained by replacing the metric $d_{1}$, by $\sqrt{d_{1}}$ (which is also a metric).

	\noindent	 Suppose that $\mathcal{Y}$ is also equipped with another distance $d_{2}$. We may have $d_{2}= d_{1}$, but in some cases it is convenient to work with two distances.
	
	\noindent For $y\in \mathcal{Y}$ and $r\geq 0$, denote by $\mathcal{B}(y,r):= \{z\in \mathcal{Y};\hspace{0.1cm} d_{2}(y,z)\leq r \}$, the closed ball of center $y\in \mathcal{Y}$, and radius $r$.

	\begin{definition} \label{def:homoc} We say that the distribution of $(X,Y)$ is homoscedastic with respect to the regression function $m$ if there exists a function $\phi: [0,\infty)\to [0,1]$ such that for all $x\in \mathbb{R}^{p}$ and $r\geq 0$, 
		$$\mathbb{P}(Y\in \mathcal{B}(m(x), r)|X=x)= \phi(r).$$ 
	\end{definition}
	
	\noindent To motivate better our definition of homoscedasticity, let us consider the following examples. The first example is the most natural and commonly encountered in the statistical literature. In this case, the notion of homoscedasticity aligns with the traditional definition.
	
	\noindent However, the second example is non-trivial and serves to illustrate that even in a spaces without a vector-space structure, it is still possible to have statistical models that fall under the homoscedastic regime according to our definition.
	
	\begin{example}
		
	\noindent	
		\noindent	Suppose that $\mathcal{X}= \mathbb{R}^{p}$, $\mathcal{Y}= \mathbb{R}^{m},$ $d_{1}(\cdot,\cdot)=d_{2}(\cdot,\cdot)=\norm{\cdot-\cdot}$ (arbitrary norm in $\mathbb{R}^{m}$), and consider the regression model
		
		\begin{equation}
		Y= m(X)+\epsilon, 
		\end{equation}
		
		\noindent where, the random vector $\epsilon$, taking values in $\mathbb{R}^{m}$, is independent of $X$. 
		
		\noindent Obviously, $\mathbb{P}(d_{2}\left(Y,m\left(x\right)\right)\leq r |X=x)= \mathbb{P} \left( \norm{\epsilon}\leq r |X=x  \right)= \phi(r)$, due to the independence of $\epsilon$ and $X$.		
		
	\end{example}

\begin{example}\label{ejempldos}

	\noindent Consider $\mathcal{X} = \mathbb{R}^{p}$ and $\mathcal{Y} = \mathcal{W}_{2}(\mathbb{R})$, where $\mathcal{W}_{2}(\mathbb{R})$ denotes the 2-Wasserstein space (see \cite{MAJOR1978487, panaretos2020invitation}). More specifically,  we define $\mathcal{W}_{2}(\mathbb{R}) = \{ F \in \Pi(\mathbb{R})\}$, where $\Pi(\mathbb{R})$ is the set of distribution functions over $\mathbb{R}$ with a finite number of discontinuities.
	
\noindent	A natural metric for $\mathcal{W}_{2}(\mathbb{R})$ is $d^{2}_{\mathcal{W}_{2}}(F,G)$, is defined as 
	\begin{equation}
	d^{2}_{\mathcal{W}_{2}}(F,G) = \int_{0}^{1} (Q_{F}(t) - Q_{G}(t))^{2} dt.
	\end{equation}

\noindent	Here, $Q_{F}$ and $Q_{G}$ denote the quantile functions of the distribution functions $F$ and $G$, respectively.
	
\noindent Define the discrete-time stochastic process $Z(\cdot)$ as
\begin{equation*}
Z(j) = g(X) + \epsilon(j), \quad j = 1, \dots, n,
\end{equation*}
\noindent where $X$ is a random vector taking values in $\mathbb{R}^{p}$, $g: \mathbb{R}^{p} \to \mathbb{R}$ is a continuous function, and for each $t = 1, \dots, n$, $\epsilon(t) \sim \mathcal{N}(0, \sigma^{2}_{\epsilon})$, with $\epsilon(j) \perp \epsilon(j')$ for $j \neq j'$, where $\perp$ denotes the statistical independence between two random variables, and $\epsilon(1),\dots, \epsilon(n)$ are independent of $X$.
	
\noindent	Then, we define the element $F_{n} \in \mathcal{W}_{2}(\mathbb{R})$ as
	\begin{equation}
	F_{n}(t) = \frac{1}{n} \sum_{j=1}^{n} \mathbb{I}\{Z(j) \leq t\}.
	\end{equation}

	
	
	

\noindent	Similarly, we can define the $\rho$-th quantile related to $F_{n}$ as $Q_{n}(\rho) = \inf \{t: F_{n}(t) \geq \rho\}$, which has an explicit expression as $Q_{n}(\rho) = g(X) + \sum_{j: [j/n]\leq \rho} \epsilon_{(j)}$, where $\epsilon_{(j)}$ is the $j$-th ordered random error value from $\{\epsilon(j)\}_{j=1}^{n}$. The expectation of the quantile function can be easily derived as:
\begin{equation}
\mathbb{E}(Q_{n}(\rho)\mid X) = g(X) + \sigma_{\epsilon} h_{n}(\rho),
\end{equation}

\noindent where $h_{n}(\cdot)$ is a function that depends on the sample size $n$. To simplify the notation, we denote $Q_{n}= \left[F_{n}\right]^{-1}$. 

\noindent By the definition of the metric $d^{2}_{\mathcal{W}_{2}}(\cdot,\cdot)$, $\left[m(X)\right]^{-1}(\rho)= \mathbb{E}\left[Q_{n}(\rho)|X\right]$. To see this, we note that

\begin{equation}
m(x)= \argminB_{y\in \mathcal{W}_{2}(\mathbb{R})} \mathbb{E}(d^{2}_{\mathcal{W}_{2}}(F_{n},y)\mid X=x)=  \argminB_{y\in \mathcal{W}_{2}(\mathbb{R})} \mathbb{E}\left( \int_{0}^{1}\left(Q_{n}(t)-y^{-1}(t))^{2}dt\right)\mid X=x\right) = \mathbb{E}\left[Q_{n}\mid X=x\right]^{-1},
\end{equation}

\noindent where, in the final step, we leverage the property that the mean operator minimizes the aforementioned optimization problem in any Euclidean space.

\noindent Then,	the squared Wasserstein distance between the distribution function $F_{n}$ with respect to its conditional Fréchet mean $m(X)$ is calculated as:
	
	\begin{equation*}
	d^{2}_{\mathcal{W}_{2}}(F_{n},m(X)) = \int_{0}^{1}\left( g(X)+ \sum_{j: [j/n] \leq \rho} \epsilon_{(j)}-g(X)-\sigma_{\epsilon}h_{n}(\rho)\right)^{2}d\rho,
	\end{equation*}
	
\noindent	which does not depend on the value of $X$.

\end{example}

	\noindent We say that the distribution of $(X,Y)$ is heterocedastic with respect to the regression function $m$  when
	$$\mathbb{P}(Y\in  \mathcal{B}(m(x),r)|X=x)$$ 
	\noindent	 may depend on  $x$.
		
	\noindent	Given a random observation $X$, and confidence level $\alpha\in [0,1]$, our  goal is  to estimate a prediction region $C^{\alpha}(X)\subset \mathcal{Y}$ of the response variable $Y$  that  contains the response variable with probability at least $1-\alpha$. We  introduce the population version of the  prediction region.

	\begin{definition}
	\noindent		We define the oracle-prediction region as   
		\begin{equation} 
		C^{\alpha}(x):= \mathcal{B}(m(x),r(x)),
		\end{equation}
		\noindent where $r(x)$ is the smallest number $r$ such that $\mathbb{P}(Y\in \mathcal{B}(m(x),r)|X=x)\geq 1-\alpha$. 
	\end{definition}

	\noindent In order to estimate	$ C^{\alpha}(x)$,	suppose that we observe a random sample $\mathcal{D}_{n}=\{ (X_i,Y_i)\in \mathcal{X}\times \mathcal{Y}: i\in [n]:= \{1,2,\dots,n\}\}$ containing independent, identically distributed (i.i.d.) pairs drawn from the same distribution as $(X,Y)$.  In the rest of this paper, we  split  $\mathcal{D}_{n}$ in two disjoint subsets,  $\mathcal{D}_{n}=\mathcal{D}_{train} \cup \mathcal{D}_{test}$, in order to estimate the center and radius of the ball with two independent random samples.   We denote the set of indexes $[S_1]:= \{ i\in [n]:  \left(X_i,Y_i\right)\in   \mathcal{D}_{train} \}$, $[S_2]:= \{ i\in [n]:  (X_i,Y_i)\in   \mathcal{D}_{test} \}$, and we denote  $|\mathcal{D}_{train}|= n_1$ and $|\mathcal{D}_{test}|= n_2= n-n_1$.	
	
	\noindent	We propose estimators of the oracle-prediction region $C^{\alpha}(x)$ of the form $\widetilde{C}^{\alpha}(x)=\mathcal{B}(\widetilde{m}(x),\widetilde{r}_{\alpha}(x))$, where $\widetilde{m}(x)$ and $\widetilde{r}_{\alpha}(x)$ are estimators of $m(x)$ and $r_{\alpha}(x)$. For simplicity, we assume that the outcomes of the estimator $\widetilde{m}(\cdot;\mathcal{D}_{\text{train}})$ remain unaffected by the order in which the random elements of $\mathcal{D}_{\text{train}}$ are considered during the estimation process (i.e., the estimator is permutation-invariant). This technical requirement is commonly introduced in the context of conformal inference algorithms.

	\begin{definition}
	Consider a measurable mapping $C: \mathcal{X} \to 2^{\mathcal{Y}}$. We define the error of $C$ with respect to the  oracle-prediction region as
		
		\begin{equation*}
		\varepsilon(C,x)= \mathbb{P}(Y\in C(X)\triangle C^{\alpha}(X)\mid X=x), 
		\end{equation*}

		\noindent where $\triangle$ denotes the symmetric diference.

		\noindent	If $C= \widetilde{C}$, is constructed from $\mathcal{D}_{n}$, then  
		
		\begin{equation*}
		\varepsilon(\widetilde{C},x)= \mathbb{P}(Y\in \widetilde{C}(X)\triangle C^{\alpha}(X)\mid X=x,\mathcal{D}_{n}).	
		\end{equation*} 
		
		\noindent We are interested in the integrated error

		\begin{equation*}
		\mathbb{E}(\varepsilon(\widetilde{C},X)|\mathcal{D}_{n}).
		\end{equation*}

		\noindent We say that $\widetilde{C}$ is consistent if

		\begin{equation}
		\lim_{n\to \infty} \mathbb{E}(\varepsilon(\widetilde{C},X)|\mathcal{D}_{n})\to 0  \text{ in probability. }
		\end{equation}
	\end{definition}

	\subsection{Contributions}
	
	Now, we summarize the main contributions of this paper: 
	
	\begin{enumerate}
		\item We propose two uncertainty quantification algorithms, one for the homoscedastic case and another one for the general case.
		\begin{enumerate}
			\item  Homoscedastic case: We  propose a natural generalization of split conformal inference techniques \cite{vovk2018cross, solari2022multi} in the context of metric space responses. We obtain non-asymptotic guarantees of marginal coverage of the type $\mathbb{P}(Y\in\widetilde{C}^{\alpha}(X))\geq 1-\alpha$. In this setting, we also show that $\widetilde{C}$ is consistent,	under mild assumptions, which only require the consistency of the conditional Fréchet mean estimator, i.e., $\mathbb{E}(d_{2}(\widetilde{m}(X),m(X))| \mathcal{D}_{train})\to 0$ in probability as $|\mathcal{D}_{train}|\to \infty$.
			
			\item  Heteroscedastic case: We present a novel local algorithm based on the concept of $k$-nearest neighbors, initially proposed by Fix and Hodges \cite{fix1951discriminatory} and later extended by Cover \cite{cover1968estimation} (for a contemporary reference on the topic, see \cite{biau2015lectures}). While we do not provide non-asymptotic guarantees regarding the marginal coverage of the prediction region, we prove its consistency.

			\item The predictions regions are the generalization of the concept of conditioned quantiles for metric space responses. The prediction regions are defined by a center function $m$ and a specific radius $r$ corresponding to a chosen probability level, $\alpha\in [0,1]$. This extension builds upon recent advances in the theory of unconditional quantiles for metric spaces \cite{liu2022quantiles} and provides a natural generalization of the existing concepts.
			
		\end{enumerate}
  
Importantly, from a computational perspective, after estimating the regression function $m$,
 the uncertainty quantification algorithms are capable of handling problems involving millions of data points in just a few seconds.

	\item 	We introduce an  approach to discern between homoscedastic and heteroscedastic uncertainty quantification models using a global hypothesis testing criterion. Unlike classical approaches \cite{goldfeld1965some}, our methods take a general and non-parametric perspective, allowing for wider applicability. We draw upon the concept distance correlation, based on the energy distance initially introduced \cite{10.1214/009053607000000505, szekely2017energy}. Our criterion provides a powerful tool to assess the homoscedasticity or heteroscedasticity of the underlying distribution.

 \item The proposed uncertainty quantification framework has the potential to develop new models for complex statistical objects for different modeling tasks. In this paper, we focus on a \textit{local} \cite{https://doi.org/10.1111/biom.12254} and \textit{general} variable selection method designed specifically for responses in an arbitrary separable metric space. Other existing variable selection methods are limited to linear models and adopt a global perspective \cite{tucker2021variable}.
	
	\item We illustrate the potential of the proposed framework in real-world scenarios and demonstrate its performance in applications in the domain of digital and precision medicine \cite{tu2020era}. The different  applications involve probability distributions with the $2$-Wasserstein metric, fMRI data with Laplacian graphs, and multivariate Euclidean data. As application of our framework, we discuss an application that provides normative reference values \cite{wang2018hand} for glucose time series in healthy individuals using the recently proposed distributional glucose time series representations \cite{doi:10.1177/0962280221998064}. This new analysis has the potential to establish clearer characterizations as the glucose values evolve in the healthy population conditioned on patient characteristics, providing a formal basis for establishing personalized definition of diseases. For example, the derivation of reference values across healthy populations has gained increasing interest among  digital medical researchers, as demonstrated by the study \cite{Shapiro2023}. In their research, they defined normative values for a digital biomarker, namely, the "Pulse Oximetry value."

	\end{enumerate}

\subsection{Literature overview}
	
\subsubsection{Uncertainity quantification}\label{sec:summary}

\noindent In recent years, uncertainty quantification became an  active research area  \cite{geisser2017predictive, politis2015model}. The impact of uncertainty quantification on data-driven systems has led to a remarkable surge of interest in both applied and theoretical domains. These works delve into the profound implications of uncertainty quantification in statistical science and beyond such as in the biomedical field \cite{HAMMOURI2023, Banerji2023}.

\noindent Geisser's pioneering book develops a mathematical theory of prediction inference \cite{geisser2017predictive}. Building upon Geisser's foundations, Politis presented a comprehensive methodology that effectively harnesses resampling techniques \cite{politis2015model}. Additionally, the  book of Vovk, Gammmerman,  and Shafer \cite{vovk2005algorithmic} (see  recent reviews \cite{angelopoulos2021gentle,fontana2023conformal} here) 
has been influential.

\noindent One of the most widely used and robust frameworks for quantifying uncertainty in statistical and machine learning models is conformal inference \cite{shafer2008tutorial}. The central idea of conformal inference is rooted in the concept of exchangeability \cite{kuchibhotla2020exchangeability}. For simplicity, we assume that the random elements observed, $\mathcal{D}_{n}$,  are independent and identically distributed (i.i.d.).
\noindent Now, we present a general overview of conformal inference methods for regresion models with scalar responses. Consider the sequence $\mathcal{D}_{n} = \{(X_i, Y_{i})\}^{n}_{i=1}$ of i.i.d. random variables. Given a new i.i.d. random pair $(X, Y)$ with respect to $\mathcal{D}_{n}$, conformal prediction, as introduced by \cite{vovk2005algorithmic}, provides a family of algorithms for constructing prediction intervals independently of the regression algorithm used.

\noindent Fix any  regression algorithm 
\[\alg: \ \cup_{n\geq 0} \left(\Xcal\times \R\right)^n \ \rightarrow \
  \left\{\textnormal{measurable functions $\widetilde{m}: \Xcal\rightarrow\R$}\right\}, 
\] 
which maps a data set containing any number of pairs $(X_i,Y_i)$, to a fitted
regression function $\widetilde{m}$. The algorithm $\alg$ is required to treat
data points symmetrically, i.e.,

\begin{equation}\label{eqn:alg_symmetric}
\alg\big((x_{\pi(1)},y_{\pi(1)}),\dots,(x_{\pi(n)},y_{\pi(n)})\big) =
\alg\big((x_1,y_1),\dots,(x_n,y_n)\big)
\end{equation}
for all $n \geq 1$, all permutations $\pi$ on $[n]=\{1,\dots,n\}$, and all
$\{(x_i,y_i)\}_{i=1}^{n}$. Next, for each $y\in\R$, let \[
\widetilde{m}^{y} = \alg\big((X_1,Y_1),\dots,(X_n,Y_n),(X,y)\big)
\]
denote the trained model, fitted to the training data together with a
hypothesized test point $(X,y)$, and let  
\begin{equation}\label{eqn:R_y_i}
R^y_i = 
\begin{cases}|Y_i - \widetilde{m}^{y}(X_i)|, & i=1,\dots,n,\\ 
|y-\widetilde{m}^{y}(X)|, & i=n+1.
\end{cases}
\end{equation}
The prediction interval for $X$ is then defined as 
\begin{equation}\label{eqn:def_fullCP}
\widetilde{C}^{\alpha}(X;\mathcal{D}_{n}) =\left\{y \in\R \ : \ R^y_{n+1}\leq
  \quant_{1-\alpha}\left(\sum_{i=1}^{n+1} \tfrac{1}{n+1} \cdot \delta_{R^y_i}
  \right)\right\}, 
\end{equation}

\noindent  where $\quant_{1-\alpha}\left(\sum_{i=1}^{n+1} \tfrac{1}{n+1} \cdot \delta_{R^y_i}
  \right)$ denotes the quantile  of order $1-\alpha$ of the empirical distribution $\sum_{i=1}^{n+1} \tfrac{1}{n+1} \cdot \delta_{R^y_i}$.

\noindent The full conformal method is known to guarantee distribution-free
predictive coverage at the target level $1-\alpha$: 

\begin{theorem}[Full conformal prediction
  \cite{vovk2005algorithmic}]\label{thm:background_fullCP} 
If the data points $(X_1,Y_1),\dots,(X_n,Y_n),(X,Y)$ are i.i.d.\ (or
more generally, exchangeable), and the algorithm $\alg$ treats the input data
points symmetrically as in~\eqref{eqn:alg_symmetric}, then the full conformal
prediction set defined in~\eqref{eqn:def_fullCP} satisfies 
\[
\mathbb{P}(Y\in \widetilde{C}^{\alpha}(X; \mathcal{D}_{n})) \geq 1-\alpha.
\]
\end{theorem}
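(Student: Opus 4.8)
The plan is to run the classical exchangeability argument behind full conformal prediction, the only delicate point being the treatment of ties in the empirical quantile.

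First I would unwind the definition: by \eqref{eqn:def_fullCP}, the event $\{Y\in\widetilde{C}^{\alpha}(X;\mathcal{D}_{n})\}$ is precisely the event obtained by plugging the \emph{true} response $y=Y$ into the construction, namely $\{R^{Y}_{n+1}\le \quant_{1-\alpha}(\tfrac{1}{n+1}\sum_{i=1}^{n+1}\delta_{R^{Y}_{i}})\}$. Reading off the $(1-\alpha)$-quantile of the empirical distribution on $n+1$ atoms, this is the event that $R^{Y}_{n+1}$ is among the $k:=\lceil(1-\alpha)(n+1)\rceil$ smallest of the values $R^{Y}_{1},\dots,R^{Y}_{n+1}$, with ties resolved in the direction that makes the quantile convention consistent with this description. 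So it suffices to show $\mathbb{P}(\text{rank of }R^{Y}_{n+1}\le k)\ge k/(n+1)$, since $k/(n+1)\ge 1-\alpha$.

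Second, the structural input. When $y=Y$, the augmented sample $(X_1,Y_1),\dots,(X_n,Y_n),(X,Y)$ is i.i.d., hence exchangeable, and since $\alg$ is symmetric in the sense of \eqref{eqn:alg_symmetric}, the fitted function $\widetilde{m}^{Y}$ is an unordered function of these $n+1$ pairs. Therefore applying a permutation $\pi$ of the $n+1$ augmented points leaves $\widetilde{m}^{Y}$ unchanged and permutes the residuals accordingly, so the random vector $(R^{Y}_{1},\dots,R^{Y}_{n+1})$ is exchangeable. Now invoke the elementary fact that for exchangeable real random variables $R_1,\dots,R_{n+1}$, each coordinate — in particular the last — has probability at least $k/(n+1)$ of being among the $k$ smallest: in the tie-free case each rank is uniform on $\{1,\dots,n+1\}$; in general one passes to the vector of \emph{average} ranks, which is still exchangeable, observes that the number of coordinates with average rank $\le k$ is always at least $k$, takes expectations, and uses symmetry to conclude that coordinate $n+1$ carries at least a $k/(n+1)$ share of that mass. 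Combining, $\mathbb{P}(Y\in\widetilde{C}^{\alpha}(X;\mathcal{D}_n))\ge k/(n+1)\ge 1-\alpha$.

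The main obstacle is purely bookkeeping rather than conceptual: one must align the tie-breaking built into $\quant_{1-\alpha}$ with the average-rank formulation so that the set inclusion $\{Y\in\widetilde{C}^{\alpha}(X)\}\supseteq\{\text{rank of }R^{Y}_{n+1}\le k\}$ holds exactly (an off-by-one in the definition of $k$ or a wrong tie convention is the only thing that can break the bound). Once the conventions match, the exchangeability lemma closes the argument immediately, and nothing beyond i.i.d.\ (in fact exchangeability) of the data and permutation-invariance of $\alg$ is used — no assumption on the distribution of $Y$ or on the quality of $\alg$.
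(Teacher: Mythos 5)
Your argument is the standard exchangeability proof of full conformal validity, and it is correct. Note, though, that the paper does not prove this theorem at all: it states it as a background result and cites \cite{vovk2005algorithmic}, so there is nothing in the paper to compare against line by line. Your reconstruction matches the classical argument one finds in Vovk--Gammerman--Shafer and in the modern conformal literature (e.g., Lei et al., Barber et al.): unwind the membership event to $R^{Y}_{n+1}\le \quant_{1-\alpha}\bigl(\tfrac{1}{n+1}\sum_{i}\delta_{R^{Y}_{i}}\bigr)$, use permutation-invariance of $\alg$ plus exchangeability of the augmented sample to conclude that $(R^{Y}_{1},\dots,R^{Y}_{n+1})$ is exchangeable, and then apply the rank lemma with $k=\lceil(1-\alpha)(n+1)\rceil$. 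The only genuinely delicate point, as you flag, is ties; your average-rank remedy works, though the slightly cleaner route is to observe that for any realization the number of indices $i$ with $R^{Y}_{i}\le R^{Y}_{(k)}$ is at least $k$, take expectations of the indicator sum, and divide by $n+1$ using exchangeability. Either way the bound $\mathbb{P}(Y\in\widetilde{C}^{\alpha}(X;\mathcal{D}_n))\ge k/(n+1)\ge 1-\alpha$ follows, and your proof is complete and correct.
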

\noindent The same result holds  for split conformal methods, which involve estimating the regression function on $\mathcal{D}_{\text{train}}$ and the quantile on $\mathcal{D}_{\text{test}}$.

\noindent Conformal inference (both split and full) was
initially proposed in terms of ``nonconformity scores''
\smash{$\widehat{S}(X_i,Y_i)$}, where \smash{$\widehat{S}$} is a fitted function
that measure the extent to which a data point $(X_i,Y_i)$ is unusual relative to
a training data set $\mathcal{D}_{train}$. For simplicity, so far we have only presented the
most commonly used nonconformity score, which is the residual from the fitted
model
\begin{equation}\label{eqn:standard_nonconformity_score}
\widehat{S}(X_i,Y_i) := |Y_i - \widetilde{m}(X_i)|,
\end{equation}
\noindent \noindent \noindent where $\widetilde{m}$ is  estimator of the regression function, estimated using random elements from the first split, $\mathcal{D}_{train}$.

\noindent The non-asymptotic guarantees provided by Theorem~\ref{thm:background_fullCP} can be achieved under  more general  conditions. For instance, Vovk \cite{vovk2012conditional} established non-asymptotic guarantees with respect to the conditional coverage probability $\mathcal{D}_{n}$, employing the same hypotheses used in Theorem~\ref{thm:background_fullCP}. He proves inequalities of the form

\begin{equation}
\mathbb{P}(Y \in \widetilde{C}^{\alpha}(X; \mathcal{D}_{n}) \mid \mathcal{D}_{n})\geq 1-\alpha,
\end{equation}

\noindent while Barber et al. \cite{foygel2021limits} investigated the case of conditional coverage probability with respect to a specific point $x \in \mathcal{X}$, meaning

\begin{equation}\label{eqn:cond}
 \mathbb{P}(Y \in \widetilde{C}^{\alpha}(X; \mathcal{D}_{n}) \mid X=x)\geq 1-\alpha.
\end{equation}

\noindent However, only in some special cases, such as those involving a finite predictor space, is it possible to generalize the results of Theorem \ref{thm:background_fullCP}.

\noindent In the general i.i.d. context, and beyond the conformal inference framework, within the asymptotic regime, Györfi and Walk \cite{gyofi2020nearest} presented a kNN-based uncertainty quantification algorithm conditioned on both: i) a random sample $\mathcal{D}_{n}$, and ii) covariate characteristics. This algorithm possesses the property that 

\begin{equation*}
\lim_{n\to \infty} \mathbb{P}(Y \in \widetilde{C}^{\alpha}(X; \mathcal{D}_{n})| \mathcal{D}_{n}, X=x)= 1-\alpha \text{ in probability}.
\end{equation*}

\noindent The work of \cite{gyofi2020nearest} is particularly relevant as it provides optimal non-parametric rates for kNN-based uncertainty quantification.

\noindent Several authors  established non-asymptotic guarantees for the coverage outlined in equation \eqref{eqn:cond}. However, in general, this is impossible without incorporating strong assumptions about the joint distribution of $(X, Y)$ or possessing exact knowledge of the distribution \cite{foygel2021limits, gibbs2023conformal}.

\noindent \cite{lei2014distribution} introduced  conditions under which the  condition in ~\eqref{eqn:cond} can lead to the Lebesgue measure of the estimated prediction region $\widetilde{C}^{\alpha}(X; \mathcal{D}_{n})$ becoming unbounded under general hypotheses. More specifically, they prove that,

\begin{equation*}
\lim_{n \to \infty} \mathbb{E}[\text{Leb}(\widetilde{C}^{\alpha}(X; \mathcal{D}_{n}))] = \infty,
\end{equation*}

\noindent where $\text{Leb}$ denotes the Lebesgue measure.

	\noindent Conformal inference techniques have been applied to various regression settings, including the estimation of the conditional mean regression function \cite{lei2018distribution}, conditional quantiles \cite{https://doi.org/10.1002/sta4.261}, and different quantities that arise from conditional distribution functions \cite{singh2023distribution, chernozhukov2021distributional}. In recent years, multiple extensions of conformal techniques have emerged to handle counterfactual inference problems \cite{chernozhukov2021exact, https://doi.org/10.1111/rssb.12445, yin2022conformal, jin2023sensitivity}, heterogeneous policy effect \cite{cheng2022conformal}, reinforcement learning \cite{dietterichreinforcement}, federated learning \cite{lu2023federated}, outlier detection \cite{bates2023testing}, hyphotesis testing \cite{hu2023two}, robust optimization \cite{johnstone2021conformal},  multilevel structures \cite{fong2021conformal, dunn2022distribution}, missing data \cite{matabuena2022kernel, zaffran2023conformal}, and survival analysis problems \cite{https://doi.org/10.1111/rssb.12445, teng2021t}, as well as problems involving dependent data such as time series and spatial data \cite{chernozhukov2021exact,  xu2021conformal, sun2022conformal, xu2023conformal}.

	\noindent	Despite the significant progress in conformal inference methods, there has been relatively little work on handling multivariate \cite{johnstone2022exact, messoudi2022ellipsoidal} and functional data \cite{doi:10.1080/01621459.2012.751873,diquigiovanni2022conformal,matabuena2021covid, dietterich2022conformal,ghosal2023multivariate}. In the case of classification problems, the first contributions to handle multivariate responses have also been recently proposed (see, for example, \cite{cauchois2021knowing}).

\noindent  \cite{10.1093/imaiai/iaac017,wu2023bootstrap, das2022model}, provide asymptotic marginal guarantees and, in certain cases, asymptotic results for the conditional coverage. The key idea behind these approaches is the application of resampling techniques, originally proposed by Politis et al. \cite{politis1994large}, to residuals or other score measures. Notably, these methodologies are applicable regardless of the predictive algorithm being used, as emphasized by \cite{politis2015model}.
	
	\noindent	Bayesian methods are also an important framework to quantify uncertainty (see \cite{chhikara1982prediction}), which can also be integrated with conformal inference methods \cite{angelopoulos2021gentle, fong2021conformal, patel2023variational}. Gaussian response theory, including linear Gaussian and multivariate response regression models \cite{thombs1990bootstrap, doi:10.1080/00031305.2022.2087735}, as a particular case, is a classical and popular approach. However, the latter techniques generally introduce stronger parametric assumptions in statistical modeling or include the limitation or difficulty in selecting the appropriate prior distribution in Bayesian modeling \cite{gawlikowski2023survey}. The theory of tolerance regions gives another connection with the problem studied here \cite{fraser1956tolerance, hamada2004bayesian}, which was generalized for the multivariate case with the notion of depth bands (see  \cite{li2008multivariate}). However, a few conditional depth measures are available in the literature \cite{garcia2023causal}. Depth band measures for statistical objects that take values in metric spaces have recently been proposed \cite{geenens2021statistical, dubey2022depth, liu2022quantiles, virta2023spatial} but only in the unconditional case.

	\subsubsection{Statistical modeling in metric spaces}
	
\noindent	One of the most prominent applications of statistical modeling in metric spaces is in biomedical problems \cite{rodriguez2022contributions}. In personalized and digital medicine applications, it is increasingly common to measure patients' health conditions with complex statistical objects, such as curves and graphs, which allow recording patients' physiological functions and measuring the topological connectivity patterns of the brain at a high resolution level. For example, in recent work, the concept of "glucodensity"  \cite{doi:10.1177/0962280221998064} has been coined, which is a distributional representation of a patient's glucose profile that improves existing methodology in diabetes research \cite{matabuena2022kernel}. This representation is also helpful in obtaining better results with accelerometer data \cite{matabuena2021distributional,ghosal2021scalar,matabuena2022physical, ghosal2023multivariate}.
	
	\noindent From a methodological point of view, statistical regression analysis of response data in metric spaces is a novel research direction \cite{fan2021conditional,chen2021wasserstein,petersen2021wasserstein, zhou2021dynamic, dubey2022modeling, 10.3150/21-BEJ1410, 10.3150/21-BEJ1410, kurisumodel,chen2023sliced}. The first papers on hypothesis testing \cite{10.1214/20-AOP1504,dubey2019frechet,petersen2021wasserstein,fout2023fr}, variable selection \cite{tucker2021variable}, missing data \cite{matabuena2024personalized}, 
multilevel models \cite{matabuena2024multilevel,
bhattacharjee2023geodesic}, dimension-reduction \cite{zhang2022nonlinear},  semi-parametric regression models \cite{bhattacharjee2021single, ghosal2023predicting}, semi-supervised algorithms \cite{	qiu2024semisupervised}
 and non-parametric regression models \cite{schotz2021frechet,hanneke2022universally, bulte2023medoid, bhattacharjee2023nonlinear} have recently appeared.

	\section{Mathematical models}

\noindent In this section, we present a framework for uncertainty quantification, building upon the previous definition of the oracle prediction region (see Section \ref{sec:def}):

	\begin{equation}
	C^{\alpha}(x):= \mathcal{B}(m(x),r(x)).
	\end{equation}
	
	\noindent Recall that $r(x)$ is the smallest number $r$ such that $\mathbb{P}(Y\in \mathcal{B}(m(x),r)\mid X=x)\geq 1-\alpha$.
	
	\noindent The initial phase of our  uncertainty quantification methodology entails the estimation of the Fréchet regression function $m:\mathcal{X}\to \mathcal{Y}$.  This estimation process leverages the  geometry introduced by the distance function $d_{1}$.

\noindent In the second step, we determine the radius that governs the number of points encompassed within the prediction regions. This selection is made to ensure that the set includes at least $(1-\alpha)$ proportion of points from the dataset.

\noindent We emphasize that our method establishes for different confidence levels $\alpha \in [0,1]$, different level sets for the prediction regions problem. This approach allows us to associate the prediction regions with the notion of conditional quantiles for metric space responses, generalizing the existing marginal approaches \cite{liu2022quantiles}.

	\subsection{Homoscedastic case}\label{sec:homo}

\noindent Algorithm \ref{alg:metd1} outlines the core steps of our uncertainty quantification method for the homoscedastic case. The key idea behind our approach is to estimate the function \(m\) and the radius \(r\) by dividing the data into two independent datasets.

\noindent In the homoscedastic case, the distribution of $ d_{2}(Y, m(x))$ remains invariant regardless of the point \(x \in \mathcal{X}\). As a result, it is natural to which estimate the radius using the empirical distribution from the random sample  \(\mathcal{D}_{test}= \{d_{2}(Y_i, \widetilde{m}(X_i))\}_{i \in [S_2]}\).  Then, we can formulate a conformalized version of this algorithm to attain non-asymptotic guarantees of the form $\mathbb{P}(Y\in \widetilde{C}^{\alpha}(X;\mathcal{D}_{n}))\geq 1-\alpha$.

\noindent The core idea of estimating different quantities of the model in different splits is not new in the literature of conformal inference. This approach enhances the computational feasibility of the methods, providing non-asymptotic guarantees and facilitating theoretical asymptotic analysis. However, there may be some loss in statistical efficiency due to estimating model quantities in subsamples. These methods, known as split conformal methods, have been extensively studied (see, for example, \cite{vovk2018cross, solari2022multi}).

		\begin{algorithm}[ht!]
		\caption{Uncertainty quantification algorithm homoscedastic set-up}
		\begin{enumerate}
			\item Estimate the function $m(\cdot)$, by  $\widetilde{m}(\cdot)$ using the random sample $\mathcal{D}_{train}$.
			\item For all $i\in [S_2]$, evaluate $\widetilde{m}(X_i)$ and define $\widetilde{r}_i= d_{2}(Y_i, \widetilde{m}(X_i))$.
			\item Estimate the empirical distribution $\widetilde{G}^{*}(t)
			=   \frac{1}{n_2}  \sum_{i\in [S_2]} \mathbb{I}\{\widetilde{r}_i \leq t\}$ and denote by $\widetilde{q}_{1-\alpha}$ the empirical quantile of level $1-\alpha$ of the empirical distribution function  $\widetilde{G}^{*}(t)$.
			\item Return  $\widetilde{C}^{\alpha}(x;\mathcal{D}_{n})= \mathcal{B} (\widetilde{m}(x),\widetilde{q}_{1-\alpha})$. 
		\end{enumerate}\label{alg:metd1}
	\end{algorithm}

\noindent Observe that we have the marginal finite sample guarantee. 
	
	\begin{proposition}	\label{th:finite}	    
For any regression function \(m\) and estimator \(\widetilde{m}\), assuming the random elements $\{(X_i,Y_i) \}_{i=1}^{n}$ are i.i.d. with respect to a pair \((X,Y)\), Algorithm \ref{alg:metd1} satisfies:
		\begin{equation*}
	 \mathbb{P}(Y \in \widetilde{C}^{\alpha}(X; \mathcal{D}_{n}))\geq 1-\alpha.
		\end{equation*}
	\end{proposition}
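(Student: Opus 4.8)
The plan is to reduce the claim to the standard split-conformal coverage guarantee by conditioning on the training split. First I would condition on $\mathcal{D}_{train}$: then $\widetilde{m}=\widetilde{m}(\cdot\,;\mathcal{D}_{train})$ is a fixed measurable map, and because $(X_1,Y_1),\dots,(X_n,Y_n),(X,Y)$ are i.i.d., the calibration pairs $\{(X_i,Y_i)\}_{i\in[S_2]}$ together with the test pair $(X,Y)$ are, conditionally on $\mathcal{D}_{train}$, i.i.d.\ draws from the data distribution that are independent of $\mathcal{D}_{train}$. Hence the real-valued scores $\widetilde{r}_i=d_2(Y_i,\widetilde{m}(X_i))$, $i\in[S_2]$, together with $\widetilde{r}_{\mathrm{test}}:=d_2(Y,\widetilde{m}(X))$, form, given $\mathcal{D}_{train}$, an exchangeable family of $n_2+1$ random variables.

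The second step is to rewrite the coverage event in terms of these scores. By the construction of $\widetilde{C}^{\alpha}$ in Algorithm~\ref{alg:metd1} and the definition of the closed ball $\mathcal{B}(\cdot,\cdot)$ with respect to $d_2$,
\[
\{\,Y\in\widetilde{C}^{\alpha}(X;\mathcal{D}_n)\,\}=\{\,d_2(Y,\widetilde{m}(X))\le\widetilde{q}_{1-\alpha}\,\}=\{\,\widetilde{r}_{\mathrm{test}}\le\widetilde{q}_{1-\alpha}\,\},
\]
where $\widetilde{q}_{1-\alpha}$ is the empirical $(1-\alpha)$-quantile of $\widetilde{G}^{*}$, i.e.\ an order statistic of $\widetilde{r}_1,\dots,\widetilde{r}_{n_2}$. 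It therefore suffices to prove $\mathbb{P}(\widetilde{r}_{\mathrm{test}}\le\widetilde{q}_{1-\alpha}\mid\mathcal{D}_{train})\ge 1-\alpha$ and then take expectation over $\mathcal{D}_{train}$.

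For the conditional bound I would invoke the exchangeability lemma underlying split conformal prediction (the split analogue of Theorem~\ref{thm:background_fullCP}): for exchangeable real random variables $Z_1,\dots,Z_{N+1}$, writing $Z_{(k)}$ for the $k$-th smallest of $Z_1,\dots,Z_N$, one has $\mathbb{P}(Z_{N+1}\le Z_{(k)})\ge k/(N+1)$ for every $k\in\{1,\dots,N\}$ (ties only help and are handled in the usual way). Applying this with $N=n_2$, $Z_{N+1}=\widetilde{r}_{\mathrm{test}}$ and $k=\lceil(1-\alpha)(n_2+1)\rceil$ — with the convention $\widetilde{q}_{1-\alpha}=+\infty$, so that the ball is all of $\mathcal{Y}$, when $k>n_2$ — gives $\mathbb{P}(\widetilde{r}_{\mathrm{test}}\le\widetilde{q}_{1-\alpha}\mid\mathcal{D}_{train})\ge\lceil(1-\alpha)(n_2+1)\rceil/(n_2+1)\ge 1-\alpha$. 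Taking $\mathbb{E}[\,\cdot\mid\mathcal{D}_{train}]$ and then $\mathbb{E}$ over $\mathcal{D}_{train}$ completes the argument.

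There is no serious obstacle here: this is a split-conformal argument, and in particular the permutation-invariance of $\widetilde{m}$ assumed earlier plays no role (the test point never enters the fit), only the exchangeability of the calibration/test scores. The one point I would state carefully is the convention for the empirical quantile $\widetilde{q}_{1-\alpha}$: to land exactly at the nominal level one should take it to be the $\lceil(1-\alpha)(n_2+1)\rceil$-th smallest score (equivalently, append an atom at $+\infty$ to $\widetilde{G}^{*}$ before reading off the quantile), since the plain $n_2$-point quantile $\widetilde{r}_{(\lceil(1-\alpha)n_2\rceil)}$ only yields the bound $\lceil(1-\alpha)n_2\rceil/(n_2+1)$, which can fall strictly below $1-\alpha$.
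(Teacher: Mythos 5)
Your argument is correct and is, in substance, exactly what the paper's (extremely terse) proof is appealing to: the paper simply writes $\mathbb{E}[\mathbb{I}\{d_2(Y,\widetilde{m}(X))\le\widetilde{q}_{1-\alpha}\}]\ge 1-\alpha$ with no further justification, and the justification is precisely your split-conformal exchangeability argument — condition on $\mathcal{D}_{\text{train}}$, note that $\widetilde{r}_1,\dots,\widetilde{r}_{n_2},\widetilde{r}_{\text{test}}$ are then exchangeable, and apply the order-statistics lemma. You also make a point the paper leaves tacit but which matters: $\widetilde{q}_{1-\alpha}$ must be read as the $\lceil(1-\alpha)(n_2+1)\rceil$-th order statistic (equivalently, adjoining a $+\infty$ atom before taking the quantile), since the naive $n_2$-point empirical quantile only gives $\lceil(1-\alpha)n_2\rceil/(n_2+1)$, which can dip below $1-\alpha$; this reading is consistent with the paper's subsequent remark giving the two-sided bound $1-\alpha\le\mathbb{P}(Y\in\widetilde{C}^{\alpha}(X;\mathcal{D}_n))\le 1-\alpha+\tfrac{1}{n_2+1}$. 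So your write-up is a correct and more careful version of the paper's own proof, not a different route.
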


\begin{proof}    
\noindent This follows by observing that

\begin{equation*}
\begin{aligned}
    \mathbb{P}(Y \in \widetilde{C}^{\alpha}(X; \mathcal{D}_{n})) &= \mathbb{P}(Y \in \mathcal{B}(\widetilde{m}(X), \widetilde{q}_{1-\alpha})) \\
    &= \mathbb{E}\left[\mathbb{I}\{ d_{2}(Y,\widetilde{m}(X)) \leq \widetilde{q}_{1-\alpha} \}\right] \geq 1-\alpha.
\end{aligned}
\end{equation*}
\end{proof}
	
\begin{remark}
\noindent Proposition \ref{th:finite} holds in both the homoscedastic case  and the heteroscedastic case. However, when we apply Algorithm \ref{alg:metd1} to a heteroscedastic model, the resulting conditional coverage can substantially deviate from the true value. Under such circumstances, the algorithm may not be consistency.

\end{remark}

	\begin{remark}
		
\noindent	In Algorithm \ref{alg:metd1}, when handling discrete spaces, it becomes crucial to incorporate randomization strategies in the radius estimation. This ensures that the random variable $d_{2}(Y,m(X))$ remains continuous, and guarantees that for any two distinct $\widetilde{r}_{j}$ and $\widetilde{r}_{j'}$, the probability of them being equal is precisely zero (with probability one).

\noindent	This general strategy has been extensively explored, including \cite{kuchibhotla2020exchangeability} (see Definition $1$), and \cite{cauchois2021knowing}, which specifically focuses on discrete structures. The integration of randomization techniques in such scenarios ensures the continuity of the random variable and leads to:

	\begin{equation*}
1-\alpha + \frac{1}{n_{2}+1} \geq \mathbb{P}(Y \in \widetilde{C}^{\alpha}(X; \mathcal{D}_{n})) \geq 1-\alpha.
\end{equation*}

	\end{remark}

	\subsubsection{Statistical consistency theory}

	Next we introduce the technical assumptions that ensure the asymptotic consistency of the Algorithm \ref{alg:metd1}.
	
	\begin{assumption}
		Suppose that the following hold:
		\begin{enumerate}\label{1Smet}
			\item $n_2\to \infty.$
			\item $\widetilde{m}$ is a consistent estimator in the sense that $\lim_{n_{1}\to \infty}\mathbb{E}(d_{2}(\widetilde{m}(X),m(X))| \mathcal{D}_{train})\to 0$, in probability.
 \item  The population quantile \(q_{1-\alpha} = \inf\{t \in \mathbb{R} : G(t) = \mathbb{P}(d_{2}(Y, m(X)) \leq t) = 1-\alpha\}\) exists uniquely and is a continuity point of the function \(G(\cdot)\).
 \end{enumerate}
	\end{assumption}


	\begin{theorem}\label{th:conshom}
		Under Assumption \ref{1Smet}, the estimated prediction region $\widetilde{C}^{\alpha}(x;\mathcal{D}_{n})$ obtained using Algorithm \ref{alg:metd1} satisfies

		\begin{equation}
		\lim_{n\to \infty} \mathbb{E}(\varepsilon(\widetilde{C}^{\alpha}(X;\mathcal{D}_{n}))\to 0  \text{ in probability.}  
		\end{equation}
		
	\end{theorem}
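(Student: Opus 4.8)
The plan is to show that the symmetric-difference error between the estimated region $\widetilde{C}^{\alpha}(X;\mathcal{D}_n) = \mathcal{B}(\widetilde{m}(X),\widetilde{q}_{1-\alpha})$ and the oracle region $C^{\alpha}(X) = \mathcal{B}(m(X),r(X))$ vanishes by controlling two sources of discrepancy separately: (i) the center estimate $\widetilde{m}(X)$ versus $m(X)$, and (ii) the radius estimate $\widetilde{q}_{1-\alpha}$ versus the oracle radius. Under homoscedasticity (Definition \ref{def:homoc}), $r(x) \equiv q_{1-\alpha}$ does not depend on $x$, where $q_{1-\alpha}$ is the population quantile of $d_2(Y,m(X))$ appearing in Assumption \ref{1Smet}(3); this is the key simplification that makes the argument tractable. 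First I would write, for a point $z \in \mathcal{Y}$, that $z$ lies in the symmetric difference $\mathcal{B}(\widetilde{m}(X),\widetilde{q}_{1-\alpha}) \triangle \mathcal{B}(m(X),q_{1-\alpha})$ only if $d_2(z,m(X))$ falls in a small window around $q_{1-\alpha}$ — specifically, by the triangle inequality, $|d_2(z,\widetilde{m}(X)) - d_2(z,m(X))| \le d_2(\widetilde{m}(X),m(X))$, so membership in the two balls can disagree only when $d_2(z,m(X)) \in [\,q_{1-\alpha} - \eta,\, q_{1-\alpha} + \eta\,]$ with $\eta = d_2(\widetilde{m}(X),m(X)) + |\widetilde{q}_{1-\alpha} - q_{1-\alpha}|$.

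The main steps, in order, would be: (1) Bound $\mathbb{E}[\varepsilon(\widetilde{C}^\alpha(X;\mathcal{D}_n)) \mid \mathcal{D}_n]$ by $\mathbb{P}\big(d_2(Y,m(X)) \in [q_{1-\alpha}-\eta,\, q_{1-\alpha}+\eta] \,\big|\, \mathcal{D}_n\big)$ where $\eta$ is the (random, $\mathcal{D}_n$-measurable after a further conditioning on $X$) quantity above — this requires a small amount of care because $\eta$ depends on $X$ through $d_2(\widetilde{m}(X),m(X))$, so I would split into the event where $d_2(\widetilde{m}(X),m(X))$ is small and its complement. (2) Show $|\widetilde{q}_{1-\alpha} - q_{1-\alpha}| \to 0$ in probability: $\widetilde{q}_{1-\alpha}$ is the empirical $(1-\alpha)$-quantile of $\{d_2(Y_i,\widetilde{m}(X_i))\}_{i\in[S_2]}$. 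I would pass through the empirical quantile of the "oracle residuals" $\{d_2(Y_i,m(X_i))\}_{i\in[S_2]}$, which converges to $q_{1-\alpha}$ by the Glivenko–Cantelli theorem together with Assumption \ref{1Smet}(1) and (3) (uniqueness and continuity of $G$ at $q_{1-\alpha}$), and then show the perturbation from replacing $m$ by $\widetilde{m}$ shifts these residuals by at most $d_2(\widetilde{m}(X_i),m(X_i))$ uniformly, whose average is small by Assumption \ref{1Smet}(2) and Markov's inequality (this step uses the independence of $\mathcal{D}_{train}$ and $\mathcal{D}_{test}$ crucially, so that conditionally on $\mathcal{D}_{train}$ the test residuals are i.i.d.). (3) Combine: $\eta \to 0$ in probability (in an appropriate averaged sense over $X$), and then by continuity of $G$ at $q_{1-\alpha}$, $\mathbb{P}(d_2(Y,m(X)) \in [q_{1-\alpha}-\eta, q_{1-\alpha}+\eta]\mid \mathcal{D}_n) \to G(q_{1-\alpha}^+) - G(q_{1-\alpha}^-) = 0$.

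The main obstacle I anticipate is step (2) combined with the $X$-dependence of $\eta$ in step (1): one cannot simply treat $d_2(\widetilde{m}(X),m(X))$ as a fixed small number, since it is a random function of the fresh $X$, and the quantity that is controlled by Assumption \ref{1Smet}(2) is only its \emph{expectation}. The clean way around this is to introduce a deterministic threshold: on the event $A_n = \{\mathbb{E}[d_2(\widetilde{m}(X),m(X))\mid \mathcal{D}_{train}] \le \delta_n\}$ for a suitably slowly vanishing $\delta_n$ (which has probability $\to 1$ by Assumption \ref{1Smet}(2)), use Markov to say $\mathbb{P}(d_2(\widetilde{m}(X),m(X)) > \sqrt{\delta_n}\mid \mathcal{D}_n) \le \sqrt{\delta_n}$, absorb that small-probability event into the error bound, and on its complement bound $\eta \le \sqrt{\delta_n} + |\widetilde{q}_{1-\alpha}-q_{1-\alpha}|$, which is deterministic given $\mathcal{D}_n$. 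Then the continuity of $G$ at $q_{1-\alpha}$ delivers the conclusion. A secondary technical point is that $G$ need not be continuous everywhere, only at $q_{1-\alpha}$, so the Glivenko–Cantelli argument for quantile convergence should be invoked in its form that requires continuity only at the target quantile (or one argues directly that $\widetilde{G}^*(q_{1-\alpha}\pm\epsilon)$ straddles $1-\alpha$ for small $\epsilon$).
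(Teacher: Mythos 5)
Your proposal is correct and establishes the same conclusion, but via a genuinely different and arguably cleaner decomposition than the paper uses. The paper introduces two intermediate prediction regions, $\widetilde{C}_m^{\alpha}(x) = \mathcal{B}(m(x),\widetilde{q}_{1-\alpha})$ and $\widetilde{C}_q^{\alpha}(x) = \mathcal{B}(\widetilde{m}(x),q_{1-\alpha})$, then bounds the target symmetric difference by a chain of four symmetric differences (changing one of center/radius at a time) and analyzes each piece separately, tracking both $G(t)$ and the auxiliary function $G^{*}(t,x) = \mathbb{P}(d_{2}(Y,\widetilde{m}(X))\le t \mid X=x)$. You instead observe directly, via a single application of the triangle inequality, that a point $z$ can lie in $\mathcal{B}(\widetilde{m}(X),\widetilde{q}_{1-\alpha}) \triangle \mathcal{B}(m(X),q_{1-\alpha})$ only when $d_{2}(z,m(X))$ falls in a window $(q_{1-\alpha}-\eta,\,q_{1-\alpha}+\eta]$ of random half-width $\eta = d_{2}(\widetilde{m}(X),m(X)) + |\widetilde{q}_{1-\alpha}-q_{1-\alpha}|$, which collapses the whole argument to controlling the $G$-measure of a shrinking interval around $q_{1-\alpha}$ and invoking continuity of $G$ there. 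This avoids ever needing to reason about $G^{*}$ and makes the role of homoscedasticity (that $r(x)\equiv q_{1-\alpha}$ is constant) explicit and central. The paper's multi-case chain, by contrast, extends more mechanically to the heteroscedastic proof of Theorem~\ref{theorem:hetero}, which reuses the identical structure; your shortcut does not carry over as directly there since the target radius then depends on $x$. Both approaches hinge on the same two technical lemmas --- convergence $|\widetilde{q}_{1-\alpha}-q_{1-\alpha}|\to 0$ in probability (the paper's Corollary~\ref{theo:quantil}, which you re-derive via Glivenko--Cantelli plus a perturbation bound controlled by Assumption~\ref{1Smet}(2)) and continuity of $G$ at $q_{1-\alpha}$ --- and your handling of the randomness of $\eta$ through a deterministic Markov threshold is sound and indeed more explicit than the paper's treatment of the analogous point in its Cases~2 and~3.
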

	
	\begin{proof}
		See Appendix. 
	\end{proof}

\begin{assumption}\label{1S2met}
Suppose that for all \(t \in \mathbb{R}\), the distribution function of distances \(G(t, x) = \mathbb{P}(d_2(Y, m(x)) \leq t \mid X = x)\) is uniformly Lipschitz  with constant \(L\) in the sense that for all \(t, t' \in \mathbb{R}\) and for all \(x \in \mathcal{X}\), we have \(|G(t, x) - G(t', x)| \leq L |t - t'|\).
\end{assumption}

\begin{example}
\noindent	
Suppose that $\mathcal{X}= \mathbb{R}^{p}$, $\mathcal{Y}= \mathbb{R}^{m},$ $d_{1}(\cdot, \cdot)=d_{2}(\cdot,\cdot)=\norm{\cdot-\cdot}$ ($L^{2}$ norm), and consider the regression model
		
		\begin{equation}\label{eqn:gen}
		Y= m(X)+\epsilon, 
		\end{equation}
	\noindent	  where $\epsilon$ is a random vector taking values in $\mathbb{R}^{m}$, and independent from $X$. Let $f$ be the density function of the random variable $\epsilon$. If  the density function $f$ is bounded,  then, $G$ is uniformly Lipschitz.
  \end{example}
    
 \begin{remark}
Suppose that the assumptions of Example $3$ are satisfied. Consider any estimator \(\widetilde{m}\) of \(m\), and define the distribution of the distances:
\[
G^{*}(t, x) = \mathbb{P}(d_{2}(Y, \widetilde{m}(X)) \leq t \mid X = x).
\] 
We can express this probability in terms of the perturbation introduced by the estimator:
\[
G^{*}(t, x) = \mathbb{P}(\| \epsilon + (m(X) - \widetilde{m}(X)) \| \leq t \mid X = x).
\] 
Then for all $x$, the function \(G^{*}(t, x)\) is Lipschitz continuous with a constant \(L\).
\end{remark}

	\begin{proposition}\label{the:minimax1}
		Under Assumptions \ref{1Smet} and \ref{1S2met},  and assuming that $G^{*}(t,x)=\mathbb{P}(d_{2}(Y,\widetilde{m}(X))\leq t\mid X=x),$  is uniformly Lipschitz,  we can bound the conditional expected error of $\widetilde{C}$ given $\mathcal{D}_{n}$ as the sum of two terms: a function of the expected distance between $\widetilde{m}(X)$ and $m(X)$, conditional on the training set $\mathcal{D}_{train}$, and a function of the difference between the estimated and true $\alpha$-quantiles of the distribution of radius $r$, denoted by $\widetilde{q}_{1-\alpha}$ and ${q}_{1-\alpha}$, respectively. Specifically, we can write:
		
		\begin{equation*}
		\mathbb{E}(\varepsilon(\widetilde{C},X)|\mathcal{D}_{n})\leq C(\mathbb{E}(d_{2}(\widetilde{m}(X),m(X))|\mathcal{D}_{train}) + 4(|\widetilde{q}_{1-\alpha}-{q}_{1-\alpha}|)),
		\end{equation*}
		
		\noindent where $C$ is a positive constant depending  on the Lipschitz constants of the functions $G$ and $G^{*}$.
  \end{proposition}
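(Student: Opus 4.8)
The plan is to control the symmetric-difference probability $\mathbb{P}(Y \in \widetilde{C}(X) \triangle C^{\alpha}(X) \mid X=x, \mathcal{D}_n)$ pointwise and then integrate over $X$. Fix $x$ and recall $\widetilde{C}(x) = \mathcal{B}(\widetilde{m}(x), \widetilde{q}_{1-\alpha})$ while $C^{\alpha}(x) = \mathcal{B}(m(x), q_{1-\alpha})$ (using that in the homoscedastic case $r(x) \equiv q_{1-\alpha}$ is constant in $x$). The first step is a triangle-inequality reduction: introduce the intermediate ball $\mathcal{B}(m(x), \widetilde{q}_{1-\alpha})$ — same center as the oracle, same radius as the estimate — and bound
\begin{equation*}
\mathbb{P}(Y \in \widetilde{C}(x) \triangle C^{\alpha}(x) \mid \cdot) \leq \mathbb{P}(Y \in \mathcal{B}(\widetilde{m}(x),\widetilde{q}_{1-\alpha}) \triangle \mathcal{B}(m(x),\widetilde{q}_{1-\alpha}) \mid \cdot) + \mathbb{P}(Y \in \mathcal{B}(m(x),\widetilde{q}_{1-\alpha}) \triangle \mathcal{B}(m(x),q_{1-\alpha}) \mid \cdot),
\end{equation*}
which is legitimate because $A \triangle C \subseteq (A \triangle B) \cup (B \triangle C)$ and probabilities are subadditive.

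The second term is the easy one: the symmetric difference of two concentric balls $\mathcal{B}(m(x), \widetilde{q}_{1-\alpha})$ and $\mathcal{B}(m(x), q_{1-\alpha})$ is an annulus, so its conditional probability equals $|G(\widetilde{q}_{1-\alpha}, x) - G(q_{1-\alpha}, x)|$, which by Assumption \ref{1S2met} (uniform Lipschitz-ness of $G$) is at most $L |\widetilde{q}_{1-\alpha} - q_{1-\alpha}|$. For the first term I would show that the event $Y \in \mathcal{B}(\widetilde{m}(x), \widetilde{q}_{1-\alpha}) \triangle \mathcal{B}(m(x), \widetilde{q}_{1-\alpha})$ forces $d_2(Y, m(x))$ to lie within $d_2(\widetilde{m}(x), m(x))$ of the threshold $\widetilde{q}_{1-\alpha}$: indeed, if $Y$ is in one ball but not the other, then by the triangle inequality $|d_2(Y,m(x)) - \widetilde{q}_{1-\alpha}| \le d_2(\widetilde m(x), m(x))$, so $d_2(Y,m(x)) \in [\widetilde q_{1-\alpha} - \delta(x), \widetilde q_{1-\alpha}+\delta(x)]$ with $\delta(x) := d_2(\widetilde m(x), m(x))$. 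Hence this term is bounded by $G(\widetilde q_{1-\alpha} + \delta(x), x) - G(\widetilde q_{1-\alpha} - \delta(x), x) \le 2L\,\delta(x)$, again using Assumption \ref{1S2met} (or the remark's observation that $G^*$ is also Lipschitz, if one prefers to work with the distance to $\widetilde m$ directly). Actually, to match the claimed constant $4$ on the quantile term and a clean $C$ on the other, I would be slightly more careful about which distribution function ($G$ vs.\ $G^*$) is used where; a convenient route is to pass through $G^*$ for the center-mismatch piece and $G$ for the radius-mismatch piece, absorbing both Lipschitz constants into $C$.

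Putting the pieces together pointwise gives $\mathbb{P}(Y \in \widetilde{C}(x)\triangle C^{\alpha}(x)\mid X=x, \mathcal{D}_n) \le 2L\, d_2(\widetilde m(x), m(x)) + C' |\widetilde q_{1-\alpha} - q_{1-\alpha}|$, and the final step is simply to take expectation over $X$ conditional on $\mathcal{D}_n$: by the tower property and the fact that $\widetilde m$ and $\widetilde q_{1-\alpha}$ depend only on $\mathcal{D}_n$, the first term integrates to $2L\,\mathbb{E}(d_2(\widetilde m(X), m(X)) \mid \mathcal{D}_{train})$ and the second stays $C'|\widetilde q_{1-\alpha} - q_{1-\alpha}|$, yielding the stated bound with $C = \max(2L, C'/4)$ up to bookkeeping of constants. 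The only genuinely delicate point — the "main obstacle" — is the triangle-inequality argument showing that membership in exactly one of two non-concentric balls pins $d_2(Y,m(x))$ into a band of width $2\delta(x)$ around $\widetilde q_{1-\alpha}$, and then correctly matching it against whichever Lipschitz distribution function ($G$ or $G^*$) is available; everything downstream is routine subadditivity and Fubini.
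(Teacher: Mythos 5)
Your proof is correct and rests on the same core mechanism as the paper's: introduce the intermediate ball $\mathcal{B}(m(x), \widetilde{q}_{1-\alpha})$, use the triangle inequality to confine $d_2(Y,m(x))$ to a band of width $2\,d_2(\widetilde{m}(x),m(x))$ around $\widetilde{q}_{1-\alpha}$, and then invoke the Lipschitz hypotheses on $G$ and $G^{*}$. The only difference is that you decompose through a single intermediate ball into two terms, whereas the paper passes (somewhat redundantly) through both $\widetilde{C}^{\alpha}_m=\mathcal{B}(m(x),\widetilde{q}_{1-\alpha})$ and $\widetilde{C}^{\alpha}_q=\mathcal{B}(\widetilde{m}(x),q_{1-\alpha})$, producing four bounding terms; your slimmer decomposition yields the same (in fact a slightly tighter) bound.
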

		
	\begin{proof}
		See Appendix. 
	\end{proof}

\begin{example}
\noindent
Suppose that $\mathcal{X} = \mathbb{R}^p$, $\mathcal{Y} = \mathbb{R}^m$, and $d_1(\cdot, \cdot) = d_2(\cdot, \cdot) = \|\cdot - \cdot\|$ (the $L^{2}$ norm). Consider the regression model
\begin{equation}\label{reg:def}
Y = m(X) + \epsilon,
\end{equation}
\noindent where $\epsilon$ is a random vector taking values in $\mathbb{R}^m$ and is independent of $X$. Let $f$ denote the density function of $\epsilon$. In 
the multivariate scale-localization regression models defined in  \eqref{reg:def}, we can obtain the rates of Proposition \ref{the:minimax1}

	\begin{equation*}
		\mathbb{E}(\varepsilon(\widetilde{C},X)|\mathcal{D}_{n})\leq C(\mathbb{E}(d_{2}(\widetilde{m}(X),m(X))|\mathcal{D}_{train}) + 4(|\widetilde{q}_{1-\alpha}-{q}_{1-\alpha}|)).
		\end{equation*}

\end{example}

\begin{remark}
In the absence of covariates and with the Fréchet mean $m$ known, the rate of our estimation uniquely depends on the speed at which we estimate the quantiles of pseudoresiduals $r = d(Y, m(X))$. In this particular homoscedastic case, our bounds extend the optimality results established in \cite{gyofi2020nearest} for the univariate case and empirical distribution, yielding a rate of $O(\sqrt{\ln n /n})$.  In the case where the Fréchet mean is not known, but we can estimate, it  for example at the parametric rate $O(1 /\sqrt{n})$, our algorithm can exhibit fast rates.

\end{remark}

  
  \begin{remark}
\noindent More specific non-asymptotic results can be derived for homoscedastic cases and univariate responses, as demonstrated in Corollary 1 of \cite{foygel2021limits}, using the conformal model introduced in \cite{lei2018distribution}. In their work, the authors impose more stringent assumptions to bound the Lebesgue measure, including requirements such as the existence of a density function for the random error and symmetrical conditions, alongside stability criteria for the underlying mean regression functions.

\noindent In contrast, we do not aim to establish convergence under symmetrical conditions or assume the existence of a density function for the random error. While symmetrical conditions could be relevant in practical applications for obtaining interpretable and valuable regions, they are not imperative from a theoretical perspective within our framework. Moreover, assuming the existence of a density function for the random error is a very strong requirement in many real-world applications.
	\end{remark}

	\begin{remark}
In this research, we primarily focus on utilizing our methodology in the setting where $\mathcal{X}=\mathbb{R}^{p}$ is a finite-dimensional Euclidean space. However, it is worth noting that our methodology has the potential to be applied when $\mathcal{X}=\mathbb{H}$, where $\mathbb{H}$ is an arbitrary separable metric space with finite diameter under minimal theoretical conditions or a Hilbert space as the case of predictors that lie in a Reproducing Kernel Hilbert Spaces (see for example this representative regression algorithm \cite{bhattacharjee2023nonlinear}). For more details about the statistical models that accommodate such predictors see \cite{hanneke2022universally}
	\end{remark}

	\subsection{Heterocedastic case}\label{sec:hetero}

	\begin{algorithm}[ht!]
 \caption{Uncertainty quantification algorithm for heteroscedastic setup} \begin{enumerate}
 \item Estimate the function \(\widetilde{m}(\cdot)\) using the random sample \(\mathcal{D}_{\text{train}}\).
		
		\item For a given new point \(x \in \mathcal{X}\), let \(X_{(n_2,1)}(x), X_{(n_2,2)}(x), \dots, X_{(n_2,k)}(x)\) denote the \(k\)-nearest neighbors of \(x\) in increasing order based on the  distance \(d_{2}\). For each data point \(X_i\) in the test dataset \(\mathcal{D}_{\text{test}}\), compute the predicted value \(\widetilde{m}(X_i)\). Then, define the pseudo-residual \(\widetilde{r}_i\) as the distance between the true response value \(Y_i\) and the predicted value \(\widetilde{m}(X_i)\), i.e., \(\widetilde{r}_i = d_{2}(Y_i, \widetilde{m}(X_i))\). Denote the pseudo-residuals associated with the \(i\)-th ordered observation as \(\widetilde{r}_{(n_2,i)}(x)=d_{2}( Y_{(n_2,i)}, \widetilde{m}(x) )\).
		
		\item Estimate the empirical conditional distribution of order $1-\alpha$ by \(\widetilde{G}_{k}^{*}(x,t) = \frac{1}{k} \sum_{i=1}^{k} \mathbb{I}\{\widetilde{r}_{(n_2,i)}(x) \leq t\}\) and denote by \(\widetilde{q}_{1-\alpha}(x)\) the empirical quantile of the empirical distribution \(\widetilde{G}_{k}^{*}(x,t)\).
		
		\item Return the estimated prediction region as $\widetilde{C}^{\alpha}(x):=\widetilde{C}^{\alpha,k}(x) = \mathcal{B}(\widetilde{m}(x), \widetilde{q}_{1-\alpha}(x))$
	\end{enumerate}\label{algor:hete}	  
\end{algorithm}

\noindent	The homoscedastic assumption can be too restrictive in metric spaces that lack a linear structure, making a local approach more appropriate. In this paper, we adopt the $k$-nearest neighbors (kNN) regression algorithm \cite{fix1951discriminatory, biau2015lectures} for this purpose, as it is both computationally efficient and easy to implement.
	
\noindent	In heteroscedastic methods, given a point \(x\), we repeat the same steps as in the homoscedastic algorithm, but the radius estimation only considers data points in a neighborhood of \(x\), denoted as \(N_{k}(x) = \{i \in [S_2]: d_{2}(X_i, x) \leq d_{2(n_2,k)}(x)\}\). Here, \(d_{2(n_2,k)}(x)\) represents the distance between \(x\) and its \(k\)-th nearest neighbor in increasing order in  the test dataset \(\mathcal{D}_{\text{test}}\) using the  distance \(d_{2}\). Algorithm \ref{algor:hete} provides a detailed outline of this procedure.

\begin{remark}    
Although Algorithm \ref{algor:hete} can be consistent, it lacks finite sample guarantees. To address this limitation and propose a heteroscedastic algorithm with non-asymptotic performance guarantees, we suggest extending the conformal quantile methodology introduced by \cite{romano2019conformalized} to our context. Specifically, we aim to achieve the marginal property $\mathbb{P}(Y \in \widetilde{C}^{\alpha}(X;\mathcal{D}_{n})) \geq 1 - \alpha$. The core steps are outlined below:

\begin{enumerate}
    \item \textbf{Estimation of $m(\cdot)$:} Utilize the dataset $\mathcal{D}_{train}$ to estimate the regression function $m(\cdot)$.
    
    \item \textbf{Estimation of $r(\cdot)$:} Employ the random sample $\mathcal{D}_{test}$ and the pairs $(X_{i}, d_{2}(Y_{i}, \widetilde{m}(X_{i})))$ for each $i \in [S_2]$ to estimate the radius function $r(\cdot)$.
    
    \item \textbf{Calibration of $\widetilde{r}(x)$:} In an additional data-split $\mathcal{D}_{test2}$, we calibrate the radius $\widetilde{r}(x)$ to obtain the non-asymptotic guarantees using the scores $S_{i} = d_{2}(Y_{i}, \widetilde{m}(X_{i})) - \widetilde{r}(X_{i})$. The empirical quantile at level $\alpha$, denoted by $\widetilde{w}_{\alpha}$, is then used to define the final radius function as $\widehat{r}(x) = \widetilde{r}(x) + \widetilde{w}_{\alpha}$.
\end{enumerate}
	\end{remark}

	\subsubsection{Statistical Theory}
	
\noindent	The following technical assumptions are introduced to guarantee that the proposed heteroscedastic $k$NN-algorithm is asymptotically consistent.

\begin{assumption}\label{assu}
\begin{enumerate}
        \item $n_2 \to \infty$.
        \item $\widetilde{m}$ is a consistent estimator in the sense that $\mathbb{E}(d_{2}(\widetilde{m}(X), m(X))|\mathcal{D}_{\text{train}}) \to 0$  in probability as $n_1 \to \infty$.
        \item As $n_{2}\to \infty, k \to \infty$, and $\frac{k}{n_{2}} \to 0$.
       \item Except in a set of probability zero, for all \(x \in \mathcal{X}\), the pointwise population quantile \(q_{1-\alpha}(x) = \inf\{t \in \mathbb{R} : G(t , x) = \mathbb{P}(d_{2}(Y, m(x)) \leq t \mid X=x) = 1-\alpha\}\) exists uniquely and  is a continuity point of the function \(G(\cdot, x)\).
\item Except in a set of probability zero, for all \(x \in \mathcal{X}\), \(\mathbb{E}(d_{2}(Y, m(x))\mid X=x) < \infty\).
    \end{enumerate}
\end{assumption}



	\begin{theorem}\label{theorem:hetero}
		Under Assumption \ref{assu}, the uncertainty quantification estimator \(\widetilde{C}^{\alpha,k}(\cdot)\) obtained using Algorithm \ref{algor:hete} satisfies:
		\begin{equation*}
		\lim_{n\to \infty} \mathbb{E}(\varepsilon(\widetilde{C},X)|\mathcal{D}_{n}) \to 0. 
		\end{equation*}
	\end{theorem}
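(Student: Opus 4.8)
The plan is to reduce the integrated error $\mathbb{E}(\varepsilon(\widetilde C^{\alpha,k},X)\mid\mathcal D_n)$ to a handful of quantities that each vanish under Assumption \ref{assu}. Recall $\varepsilon(\widetilde C,x)=\mathbb P(Y\in\widetilde C^{\alpha,k}(X)\triangle C^\alpha(X)\mid X=x,\mathcal D_n)$ with $C^\alpha(x)=\mathcal B(m(x),q_{1-\alpha}(x))$ and $\widetilde C^{\alpha,k}(x)=\mathcal B(\widetilde m(x),\widetilde q_{1-\alpha}(x))$. First I would use the triangle inequality for $d_2$ to sandwich the symmetric difference: if $d_2(\widetilde m(x),m(x))\le\eta(x)$, then $\mathcal B(\widetilde m(x),\widetilde q_{1-\alpha}(x))\subseteq\mathcal B(m(x),\widetilde q_{1-\alpha}(x)+\eta(x))$ and conversely $\mathcal B(m(x),q_{1-\alpha}(x))\subseteq\mathcal B(\widetilde m(x),q_{1-\alpha}(x)+\eta(x))$, so the $\mathbb P(\cdot\mid X=x)$-measure of the symmetric difference is bounded by
\begin{equation*}
G\bigl(q_{1-\alpha}(x)+\widetilde q_{1-\alpha}(x)-m\text{-}\text{terms},x\bigr)-G\bigl(\min(\cdot),x\bigr),
\end{equation*}
i.e. by an expression of the form $\bigl|G(q_{1-\alpha}(x)+\eta(x),x)-G(q_{1-\alpha}(x),x)\bigr|+\bigl|G(\widetilde q_{1-\alpha}(x)+\eta(x),x)-G(\widetilde q_{1-\alpha}(x),x)\bigr|+\bigl|G(\widetilde q_{1-\alpha}(x),x)-G(q_{1-\alpha}(x),x)\bigr|$. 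So it suffices to control, in $L^1(P_X)$ conditionally on $\mathcal D_n$: (i) the center error $\eta(X)=d_2(\widetilde m(X),m(X))$, (ii) the quantile estimation error $|\widetilde q_{1-\alpha}(X)-q_{1-\alpha}(X)|$, and then to pass from these to the $G$-differences via continuity.

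The center term (i) goes to zero in probability directly by part 2 of Assumption \ref{assu}. For the quantile term (ii), I would argue that the $k$NN empirical conditional distribution $\widetilde G_k^*(x,\cdot)$ converges to $G(\cdot,x)$ at $P_X$-a.e.\ $x$. This is the crux and I expect it to be the main obstacle. The standard route is a bias–variance split: the $k$ nearest neighbors $X_{(n_2,i)}(x)$ of $x$ all lie within a ball $\mathcal B_{d_2}(x,R_{n_2,k}(x))$ whose radius $\to 0$ a.s.\ when $k/n_2\to0$ and $k\to\infty$ (Stone-type lemma, as in \cite{biau2015lectures}), so the \emph{conditional law of $Y$ given $X\in$ that shrinking ball} converges to the law of $Y$ given $X=x$ at Lebesgue-almost-every $x$; this handles the bias of $\widetilde G_k^*$. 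The variance of $\widetilde G_k^*(x,t)$ around its mean is $O(1/k)\to0$ since it is an average of $k$ (conditionally near-independent) indicators; a Hoeffding/DKW-type bound gives $\sup_t|\widetilde G_k^*(x,t)-\mathbb E[\widetilde G_k^*(x,t)\mid X_{(n_2,i)}(x)]|\to0$. A subtlety is that $\widetilde G_k^*$ is built from pseudo-residuals $\widetilde r_{(n_2,i)}(x)=d_2(Y_{(n_2,i)},\widetilde m(x))$ using the \emph{estimated} center, not $d_2(Y_{(n_2,i)},m(x))$; here I would absorb the discrepancy using $|d_2(y,\widetilde m(x))-d_2(y,m(x))|\le d_2(\widetilde m(x),m(x))$, which shifts the empirical distribution by at most $\eta(x)$ and is controlled by (i) again. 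Combining, $\widetilde G_k^*(x,\cdot)\Rightarrow G(\cdot,x)$, and since $q_{1-\alpha}(x)$ is a continuity point of $G(\cdot,x)$ (part 4) the corresponding quantiles converge, $\widetilde q_{1-\alpha}(x)\to q_{1-\alpha}(x)$ a.s.\ in $x$.

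Finally I would assemble everything. Pointwise in $x$ (off a null set), $\eta(x)$-type terms and $|\widetilde q_{1-\alpha}(x)-q_{1-\alpha}(x)|$ tend to $0$, and by continuity of $G(\cdot,x)$ at $q_{1-\alpha}(x)$ the $G$-difference bound from the first paragraph tends to $0$; hence $\varepsilon(\widetilde C^{\alpha,k},x)\to0$ for $P_X$-a.e.\ $x$. Since $\varepsilon(\widetilde C^{\alpha,k},x)\le1$ is bounded, the dominated convergence theorem gives $\mathbb E(\varepsilon(\widetilde C^{\alpha,k},X)\mid\mathcal D_n)\to0$. To get the convergence in probability statement rather than almost sure convergence, I would either run the whole argument along subsequences (every subsequence of $\mathcal D_n$ has a further subsequence along which the center-consistency in part 2 holds a.s., and then the deterministic $k$NN limit kicks in), or invoke part 5 of Assumption \ref{assu} ($\mathbb E(d_2(Y,m(x))\mid X=x)<\infty$ a.e.) to ensure the limiting conditional law is genuinely tight so that quantile convergence is not spoiled by mass escaping to infinity. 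The main obstacle, to reiterate, is the a.e.\ convergence of the $k$NN conditional empirical distribution of pseudo-residuals to the true conditional distribution $G(\cdot,x)$, in particular handling the interaction between the $k$NN smoothing and the plug-in error $\widetilde m$ vs.\ $m$; everything else is triangle inequalities and a dominated-convergence wrap-up.
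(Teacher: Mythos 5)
Your plan follows essentially the same route as the paper's own proof. You both (i) decompose the symmetric-difference error by triangle inequality into a ``center error'' piece controlled by $d_2(\widetilde m(x),m(x))$ and a ``quantile error'' piece controlled by $|\widetilde q_{1-\alpha}(x)-q_{1-\alpha}(x)|$, (ii) reduce both to pointwise convergence of the $k$NN conditional CDF estimator $\widetilde G_k^*(x,\cdot)\to G(\cdot,x)$ at the continuity point $q_{1-\alpha}(x)$, (iii) absorb the plug-in discrepancy between $d_2(Y,\widetilde m(\cdot))$ and $d_2(Y,m(\cdot))$ by the triangle inequality $|d_2(y,\widetilde m)-d_2(y,m)|\le d_2(\widetilde m,m)$, and (iv) close with boundedness of the error and integration against $P_X$.

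The one place where your plan and the paper genuinely diverge is the technical engine behind the $k$NN consistency in step (ii). The paper's Lemma~\ref{lemma:new2o} reduces estimation of $G(v,x)=\mathbb{E}[\mathbb{I}\{d_2(Y,m(X))\le v\}\mid X=x]$ to ordinary scalar $k$NN regression of the bounded pseudo-response $Y'=\mathbb{I}\{d_2(Y,m(X))\le v\}$ and then invokes Devroye's (1981) strong universal consistency theorem for $k$NN regression (Theorem~\ref{thm:aux}) as a black box. You instead sketch the underlying bias--variance proof directly: shrinking $k$NN radius via a Stone-type lemma plus a DKW/Hoeffding concentration bound for the empirical CDF of $k$ neighbors. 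The two are morally the same (Devroye's proof is a polished version of your sketch), and both are valid; the paper's reduction-to-scalar-regression trick has the advantage of importing the almost-everywhere and conditional-on-$\mathcal{D}_n$ statements for free, and of automatically handling the dependence among the $k$ neighbors that you flag as ``conditionally near-independent'' -- which is the one spot in your argument you would need to tighten (condition on the $X$-values first so the indicator $Y$-values become genuinely i.i.d.; the Lebesgue-differentiation step should also be stated with respect to $P_X$ rather than Lebesgue measure). Stylistically, the paper splits the symmetric difference through two explicit intermediate sets $\mathcal{B}(m(x),\widetilde q_{1-\alpha}(x))$ and $\mathcal{B}(\widetilde m(x),q_{1-\alpha}(x))$, giving four clean terms, whereas your containment chain is more compact; this is a cosmetic difference.
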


\begin{proof}
The proof is given in the Appendix.
\end{proof}

\begin{remark}
    We can generalize Proposition \ref{the:minimax1} using Assumption \ref{assu} for the kNN algorithm (and introducing another modified Lipchitz condition for the radius estimation $r=d_{2}(Y,m(X))$, see \cite{gyofi2020nearest}):
    \begin{equation*}
        \mathbb{E}(\varepsilon(\widetilde{C}, X) \mid \mathcal{D}_{n}) \leq C (\mathbb{E}(d_{2}(\widetilde{m}(X), m(X)) \mid \mathcal{D}_{\text{train}}) + 4\left( \mathbb{E}( |\widetilde{q}_{1-\alpha}(X) - {q}_{1-\alpha}(X)|)\right)).
    \end{equation*}
    \noindent

\noindent In the heterocedastic case, we must consider explicitly the impact of the conditional quantile $q_{1-\alpha}(x)$ for each $x \in \mathcal{X}$ in the rate. In a similar context, Györfi et al. \cite{gyofi2020nearest} propose the optimal rate for prediction intervals, which coincides when the conditional regression function \(m\) is known and is given by \(O\left(\frac{\ln n}{n^{\frac{1}{p+2}}}\right)\) for \(p \geq 1\), where $p$ denotes the dimension of the predictor $X$. It is important to note that this rate scales with the dimension of the covariate space \(p\), unlike the homoscedastic or non-covariate settings.

\end{remark}
	
	\subsection{Global Fréchet regression model}\label{sec:global}
\noindent	In this section, we introduce some background on regression modeling in metric spaces, which is essential for implementing our uncertainty quantification algorithms in practice. After motivating the need for models that handle random responses in metric spaces, particularly in clinical applications, the primary objective of this section is to introduce the global Fréchet regression model. This regression model in metric spaces will be employed across various analytical tasks.


\noindent The goal is to model the regression function
\[
	m(x)= \argminB_{y\in \mathcal{Y}} \mathbb{E}(d_{1}^{2}(Y,y)\mid X=x)
\]
so that its estimation becomes feasible.

\noindent In this section, we focus on a specific regression model designed for metric-space valued outcomes, known as the \emph{global Fréchet regression model} \cite{petersen2019frechet}.  

\noindent  
The global Fréchet regression model provides a natural extension of the standard linear regression model from Euclidean spaces to metric spaces. 

\begin{figure}[ht!]
	\centering
	\includegraphics[width=0.9\linewidth]{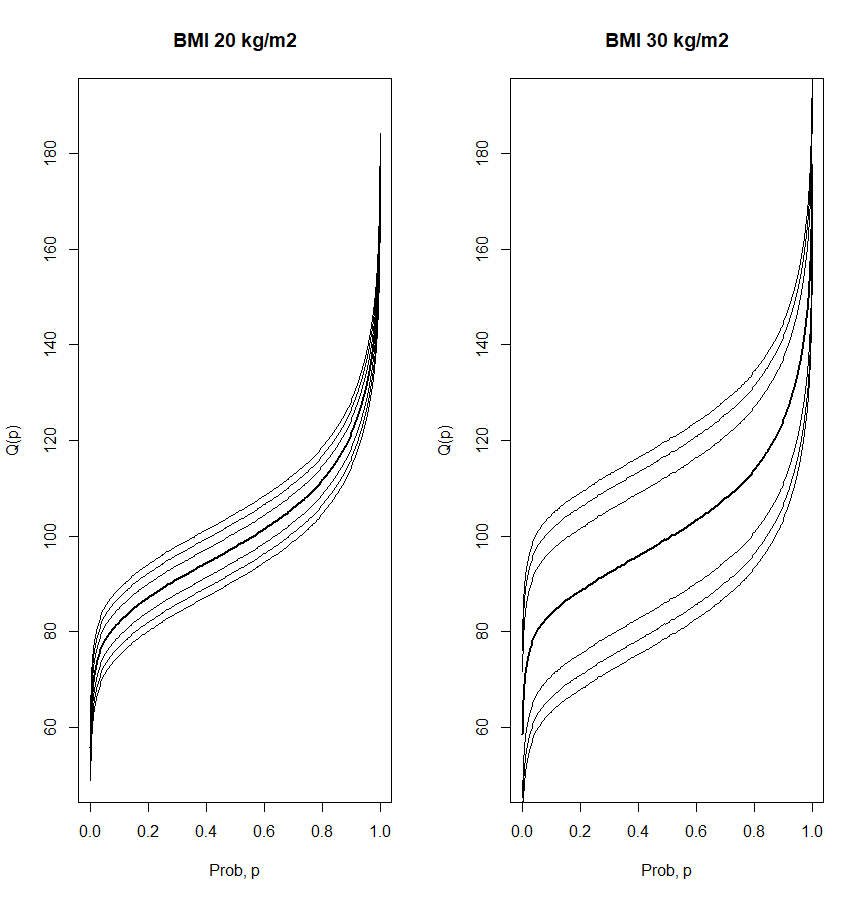}
	\caption{Uncertainty quantification from the quantile glucose representations for two non-diabetic patients with the same clinical characteristics, with the exception of body mass index (BMI). In patient (i), \(BMI = 20 \, \text{kg/m}^2\), and in patient (ii), \(BMI = 30 \, \text{kg/m}^2\) (more uncertainty).}
	\label{fig:bmiintro}
\end{figure}

\noindent Our interest in this model stems from its potential applications in the field of medical data analysis. As medical technology continues to advance, we expect to have access to increasing amounts of complex data about patients' health conditions.

\noindent To illustrate the significance of uncertainty quantification in this context, we turn our attention to the continuous glucose monitoring technology.  Consider two non-diabetic individuals who share identical clinical characteristics, except for their body mass index. Our primary focus is to predict their glucose quantile functions. As depicted in Figure \ref{fig:bmiintro}, both individuals exhibit the same conditional mean function (indicated by the black line). However, it is important to note that the uncertainty in the prediction curves increases with higher body mass.

\noindent In practical terms, if we were to use the conditional mean results to modify the diet based on changes in the glucose profiles of overweight patients, we would encounter technical difficulties. The patient responses in this subgroup turn out to be heterogeneous and individualized responses, as evidenced by the varying levels of uncertainty in the predictions. As a consequence, relying solely on prescriptive data-driven algorithms based on conditional mean regression function would prove inadequate for optimizing diet selection among these patients.





\noindent To effectively utilize uncertainty quantification algorithms, we 
consider the global Fréchet model, a generalization of linear regression to metric-space valued responses.

\noindent 
In particular, \cite{petersen2019frechet}
propose to model the regression function in the form
\begin{equation*}
m(x)=\underset{y \in \mathbb{R}}{\operatorname{argmin}} \mathbb{E}\left[\omega(x, X) d(Y,y)^{2}\right]~,
\end{equation*}
where the weight function $\omega$ is defined by
\begin{equation*}
\omega(x,z)=1+(z-\mu)^{\top} \Sigma^{-1}(x-\mu) 
\end{equation*}
with $\mu=\mathbb{E}(X)$ and $\Sigma=\operatorname{Cov}(X)$.
As \cite{petersen2019frechet} show, in the case of $\mathcal{Y}=\mathbb{R}$,
this model reduces to standard linear regression.





\noindent While in our applications we primarily focus on developing estimators in the global Fréchet regression model, it is important to emphasize that our uncertainty quantification algorithm can be applied to any regression technique, including random forests in metric spaces. It should be noted, however, that if the global Fréchet models do not align with the conditional Fréchet mean \( m \), the resulting estimators will be inconsistent. 


\noindent To estimate  the conditional mean function $m(x)$ under the
 global Fréchet model from a random sample $\mathcal{D}_n=\{(X_i,Y_i)\}_{i=1}^n$, we may solve the counterpart empirical problem

\begin{equation}\label{eqn:linear}
	\widetilde{m}(x)= \argminB_{y \in \mathcal{Y}} \frac{1}{n} \sum_{i=1}^{n} [\omega_{in}(x) d_{1}^{2}(y,Y_i)],
\end{equation}

\noindent where $\omega_{in}(x)= \left[ 1+(X_{i}-\overline{X})\widetilde{\Sigma}^{-1}(x-\overline{X})\right],$ with $\overline{X}= \frac{1}{n} \sum_{i=1}^{n} X_i$, and $\widetilde{\Sigma}= \frac{1}{n-1} \sum_{i=1}^{n} (X_i-\overline{X})(X_i-\overline{X})^{\top}$



	
    \subsection{Testing Homoscedastic and Heteroscedastic Nature of the Uncertainity Quantification Model}

In this section, we introduce rigorous criteria for testing whether the distribution $(X, Y)$ is homoscedastic.

		\noindent From a practical standpoint, model users are faced with a choice between employing the homoscedastic or heteroscedastic algorithm. In the former case, finite sample guarantees are available, whereas in the latter case, only consistency guarantees exist. If the underlying distributional model is not homoscedastic and we use such an algorithm, the estimated conditional predictions regions would be inconsistent. From a mathematical statistical perspective, the appropriate criterion for deciding which model to choose is to test the null hypothesis that, for all \(x \in \mathcal{X}\), and $r\geq 0$:
\begin{equation}
\mathbb{P}(Y \in \mathcal{B}(m(x),r) \mid X=x)
\end{equation}
\noindent is independent of  $x$.

\noindent Alternatively, we can define the problem as to test that the random variable of the pseudo-residuals \(r = d_{2}(Y, m(X))\) is independent of \(X\), i.e., $H_{0}: r \perp X$ versus $H_{a}: r \not\!\perp X$.

\noindent To achieve this, we first estimate the regression function \(m\) using the random elements of \(\mathcal{D}_{\text{train}}\), obtaining an estimator \(\widetilde{m}\). Subsequently, we construct a hypothesis test with the random sample \(\mathcal{D}_{\text{test}} = \{X_i, d_{2}(Y_i, \widetilde{m}(X_i))\}_{i \in [S_{2}]}\). Due to the structural hypothesis, as \(\widetilde{m}\) is a consistent estimator, meaning that \(\mathbb{E}(d_{2}(\widetilde{m}(X), m(X))|\mathcal{D}_{\text{train}}) \to 0\) in probability as \(|\mathcal{D}_{\text{train}}| \to \infty\), it is expected,  that hypothesis testing for homoscedasticity is consistent.

\noindent In this article, we propose employing an existing multivariate independence testing methodology—distance covariance or distance correlation (the standardized version) \cite{szekely2007measuring, corrdepend} — as a formal criterion for testing the homocedasticity of $(X,Y)$ with respect to the regression function $m$. This builds upon the original hypothesis testing framework of homoscedasticity from \cite{goldfeld1965some} for univariate responses. Our approach is methodologically valid for testing the homocedastic nature of the pair $(X,Y)$ when the predictor $X$ takes values in separable Hilbert spaces \cite{lyons2013distance}.

\noindent Let us start with the definition of the sample distance covariance for the Euclidean distance  \(\|\cdot-\cdot\|\).

\noindent For all \(i \in [S_{2}]\), consider the bivariate random element \((X_i, d_{2}(Y_i, \widetilde{m}(X_i)))\). 

\begin{definition}
     The squared sample distance covariance (a scalar) is  the arithmetic average of the products \(A_{j,k} \cdot B_{j,k}\):
\begin{align*}
\widetilde{\operatorname{dCov}}^{2}(X,r) = \frac{1}{n_{2}^{2}}\sum_{j\in  [S_{2}]}\sum_{k\in  [S_{2}]} A_{j,k} \cdot B_{j,k}, 
\end{align*}

\noindent where
\begin{align*}
A_{j,k} &= a_{j,k} - \overline{a}_{j\cdot} - \overline{a}_{\cdot k} + \overline{a}_{\cdot\cdot} &j, k \in [S_{2}],\\
B_{j,k} &= b_{j,k} - \overline{b}_{j\cdot} - \overline{b}_{\cdot k} + \overline{b}_{\cdot\cdot}, &j, k \in [S_{2}]
\end{align*}

\noindent with

\begin{align*}
a_{j,k} &= \|X_{j} - X_{k}\|, &j, k \in [S_{2}],\\
b_{j,k} &= \|d_{2}(Y_j, \widetilde{m}(X_j)) - d_{2}(Y_k, \widetilde{m}(X_k))\|, &j, k \in [S_{2}],
\end{align*}

\noindent  and \(\overline{a}_{j\cdot}\) is the \(j\)-th row mean, \(\overline{a}_{\cdot k}\) is the \(k\)-th column mean, and \(\overline{a}_{\cdot\cdot}\) is the grand mean of the distance matrix of the \(X\) sample. The notation is similar for the \(b\) values.  \(\|\cdot\|\) denotes the Euclidean norm. 
\end{definition}

\noindent 
The following consistency result enables us to develop a consistent hypothesis test to determine whether the random pair $(X, Y)$ exhibits homoscedasticity with respect to the function $m$.

 \begin{proposition}
Suppose that $\mathbb{E}(\|X\|) < \infty,$  $\mathbb{E}(r) < \infty$, and \(\mathbb{E}(d_{2}(\widetilde{m}(X), m(X))|\mathcal{D}_{\text{train}}) \to 0\) in probability as \(|\mathcal{D}_{\text{train}}| \to \infty\). Then

\begin{equation}
 \lim_{n_{1} \to \infty}     \lim_{n_{2}\to \infty} \widetilde{\operatorname{dCov}}^{2}(X,r)= \operatorname{dCov}^{2}(X,r),   
\end{equation}

\noindent where $\operatorname{dCov}(X,r)$ is the population definition of the  distance covariance introduced in  \cite{szekely2007measuring}.

\end{proposition}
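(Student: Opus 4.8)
\emph{Plan.} The natural route is to resolve the iterated limit one variable at a time. Fix first the training sample $\mathcal{D}_{train}$, so that $\widetilde{m}$ is a deterministic measurable map; since $\mathcal{D}_{test}$ is independent of $\mathcal{D}_{train}$, the pairs $\{(X_i,\widetilde{r}_i)\}_{i\in[S_2]}$ with $\widetilde{r}_i=d_2(Y_i,\widetilde m(X_i))$ are i.i.d.\ copies of the real pair $(X,\widetilde r)$, $\widetilde r:=d_2(Y,\widetilde m(X))$, and $\widetilde{\operatorname{dCov}}^2(X,r)$ as defined above is exactly the double-centered sample distance covariance $V$-statistic of this sample. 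The triangle inequality gives $|\widetilde r-r|=|d_2(Y,\widetilde m(X))-d_2(Y,m(X))|\le d_2(\widetilde m(X),m(X))$, hence $\mathbb{E}(\widetilde r\mid\mathcal{D}_{train})\le\mathbb{E}(r)+\mathbb{E}(d_2(\widetilde m(X),m(X))\mid\mathcal{D}_{train})<\infty$; with $\mathbb{E}(\|X\|)<\infty$, the strong consistency of the sample distance covariance \cite{szekely2007measuring} yields, almost surely conditionally on $\mathcal{D}_{train}$,
\[
\lim_{n_2\to\infty}\widetilde{\operatorname{dCov}}^2(X,r)=\operatorname{dCov}^2(X,\widetilde r).
\]

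It then remains to let $n_1\to\infty$ and show $\operatorname{dCov}^2(X,\widetilde r)\to\operatorname{dCov}^2(X,r)$ in probability. By the same inequality, $\mathbb{E}(|\widetilde r-r|\mid\mathcal{D}_{train})\le\mathbb{E}(d_2(\widetilde m(X),m(X))\mid\mathcal{D}_{train})\to 0$ in probability, so $\widetilde r\to r$ in $L^1$ conditionally, and in particular $(X,\widetilde r)\to(X,r)$ jointly in probability. I would then use continuity of the population distance covariance along this convergence: in the double-centering identity
\[
\operatorname{dCov}^2(X,Z)=\mathbb{E}(\|X-X'\|\,|Z-Z'|)+\mathbb{E}(\|X-X'\|)\,\mathbb{E}(|Z-Z'|)-2\,\mathbb{E}(\|X-X'\|\,|Z-Z''|),
\]
with $(X,Z),(X',Z'),(X'',Z'')$ i.i.d., each term with $Z=\widetilde r$ differs from the corresponding term with $Z=r$ by an expectation that, after bounding $\|X-X'\|\le\|X\|+\|X'\|$ and $\bigl||\widetilde r-\widetilde r'|-|r-r'|\bigr|\le|\widetilde r-r|+|\widetilde r'-r'|$, splits into cross terms that factorize (distinct, hence independent, sample indices) and reduce to $\mathbb{E}(\|X\|)\,\mathbb{E}(|\widetilde r-r|\mid\mathcal{D}_{train})\to 0$, plus a single ``diagonal'' term $\mathbb{E}(\|X\|\,d_2(\widetilde m(X),m(X))\mid\mathcal{D}_{train})$. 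Thus $\operatorname{dCov}^2(X,\widetilde r)\to\operatorname{dCov}^2(X,r)$ in probability once this diagonal term is shown to vanish, and together with the inner limit this proves the proposition.

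\emph{Main obstacle.} The delicate quantity is the diagonal term $\mathbb{E}(\|X\|\,d_2(\widetilde m(X),m(X))\mid\mathcal{D}_{train})$, which is not controlled by $\mathbb{E}(\|X\|)<\infty$ and $\mathbb{E}(d_2(\widetilde m(X),m(X))\mid\mathcal{D}_{train})\to 0$ on their own. I would dispatch it by truncation: for fixed $K>0$, $\mathbb{E}(\|X\|\,d_2(\widetilde m(X),m(X))\mathbb{I}\{\|X\|\le K\}\mid\mathcal{D}_{train})\le K\,\mathbb{E}(d_2(\widetilde m(X),m(X))\mid\mathcal{D}_{train})\to 0$, while the tail $\mathbb{E}(\|X\|\,d_2(\widetilde m(X),m(X))\mathbb{I}\{\|X\|>K\}\mid\mathcal{D}_{train})$ can be made uniformly small in $n_1$ by choosing $K$ large under a mild envelope condition on $d_2(\widetilde m(X),m(X))$ — satisfied automatically when $\mathcal{Y}$ has finite $d_2$-diameter (then $d_2(\widetilde m(X),m(X))$ is bounded and $\mathbb{E}(\|X\|)<\infty$ alone suffices), or when $\mathbb{E}(\|X\|^{1+\delta})<\infty$ for some $\delta>0$ together with a matching $L^r$-rate for $\widetilde m$. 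Alternatively, the outer limit can be run entirely through the Fourier representation $\operatorname{dCov}^2(X,Z)=(c_pc_1)^{-1}\int\int|\phi_{X,Z}(t,s)-\phi_X(t)\phi_Z(s)|^2\,\|t\|^{-(1+p)}|s|^{-2}\,dt\,ds$: the integrand converges pointwise by the joint weak convergence, and the estimates of \cite{szekely2007measuring} bound it, uniformly in $n_1$, by a fixed integrable majorant depending only on the (uniformly bounded) first moments $\mathbb{E}(\|X\|)$ and $\mathbb{E}(\widetilde r)$, so dominated convergence applies without any extra assumption.
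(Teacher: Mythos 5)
Your inner-limit step matches the paper's exactly: the paper's entire proof is a one-sentence citation of Theorem~2 of \cite{szekely2007measuring} (strong consistency of the sample distance covariance) together with the consistency hypothesis on $\widetilde m$, which is precisely how you handle $n_2\to\infty$ conditionally on $\mathcal{D}_{\text{train}}$. Where you go further is that you actually carry out the outer limit $n_1\to\infty$ --- the passage from $\operatorname{dCov}^{2}(X,\widetilde r)$ to $\operatorname{dCov}^{2}(X,r)$ --- which the paper leaves entirely implicit. Your $V$-statistic decomposition is the right tool, and in executing it you surface a genuine subtlety that the paper's terse proof passes over: the diagonal cross-moment $\mathbb{E}(\|X\|\,d_{2}(\widetilde m(X),m(X))\mid\mathcal{D}_{\text{train}})$ is not controlled by $\mathbb{E}\|X\|<\infty$ and $\mathbb{E}(d_{2}(\widetilde m(X),m(X))\mid\mathcal{D}_{\text{train}})\to 0$ separately. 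Your truncation argument makes the missing supplementary condition explicit (bounded $d_{2}$-diameter of $\mathcal{Y}$, a $(1+\delta)$-moment on $\|X\|$ paired with an $L^{r}$ rate for $\widetilde m$, or uniform integrability of the product along $n_1$); since none of these appear in the proposition's hypotheses, this is a real observation, not pedantry.

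One caveat on your own argument: be more careful with the closing claim that the Fourier-representation route avoids any extra assumption. Pointwise convergence of the weighted squared characteristic-function difference is clear once $(X,\widetilde r)\to(X,r)$ in distribution, but dominated convergence still requires an integrable majorant that is \emph{uniform over the laws of $\widetilde r$ as $n_1$ varies}, and producing one is essentially the same tail-control or uniform-integrability problem you already faced in the truncation step, not a way around it. The Fourier route is an alternative presentation of the same difficulty rather than a free lunch.
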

     
\begin{proof}
Theorem $2$ from \cite{szekely2007measuring} using the hypothesis \(\mathbb{E}(d_{2}(\widetilde{m}(X), m(X))|\mathcal{D}_{\text{train}}) \to 0\) in probability as \(|\mathcal{D}_{\text{train}}| \to \infty\).
\end{proof} 
 
\noindent Then,  the test statistic \(T_{n_2} = n_2 \widetilde{\operatorname{dCov}}^{2}(X,r)\) determines a statistical tests asymptotically valid to any deviation (alternative) from the null hyphotesis. For an implementation, see the \texttt{dcov.test} function in the \texttt{energy} package for R.

\noindent From a practical perspective, we provide several alternatives to calibrate the test statistics under the null hyphotesis and derive $p$-values, including permutation methods, resampling methods based on Efron's Naive bootstrap, or the Wild bootstrap \cite{szekely2007measuring, matabuena2022kernel, doi:10.1080/00031305.2023.2200512}. Each of these calibration options offers valuable means to fine tune the statistical tests, for diverse analysis requirements.

	\subsection{New model constructions:   variable selection methods for regression models in metric spaces}\label{sec:sel}

\begin{figure}[H]
	\centering
	\includegraphics[width=0.9\linewidth]{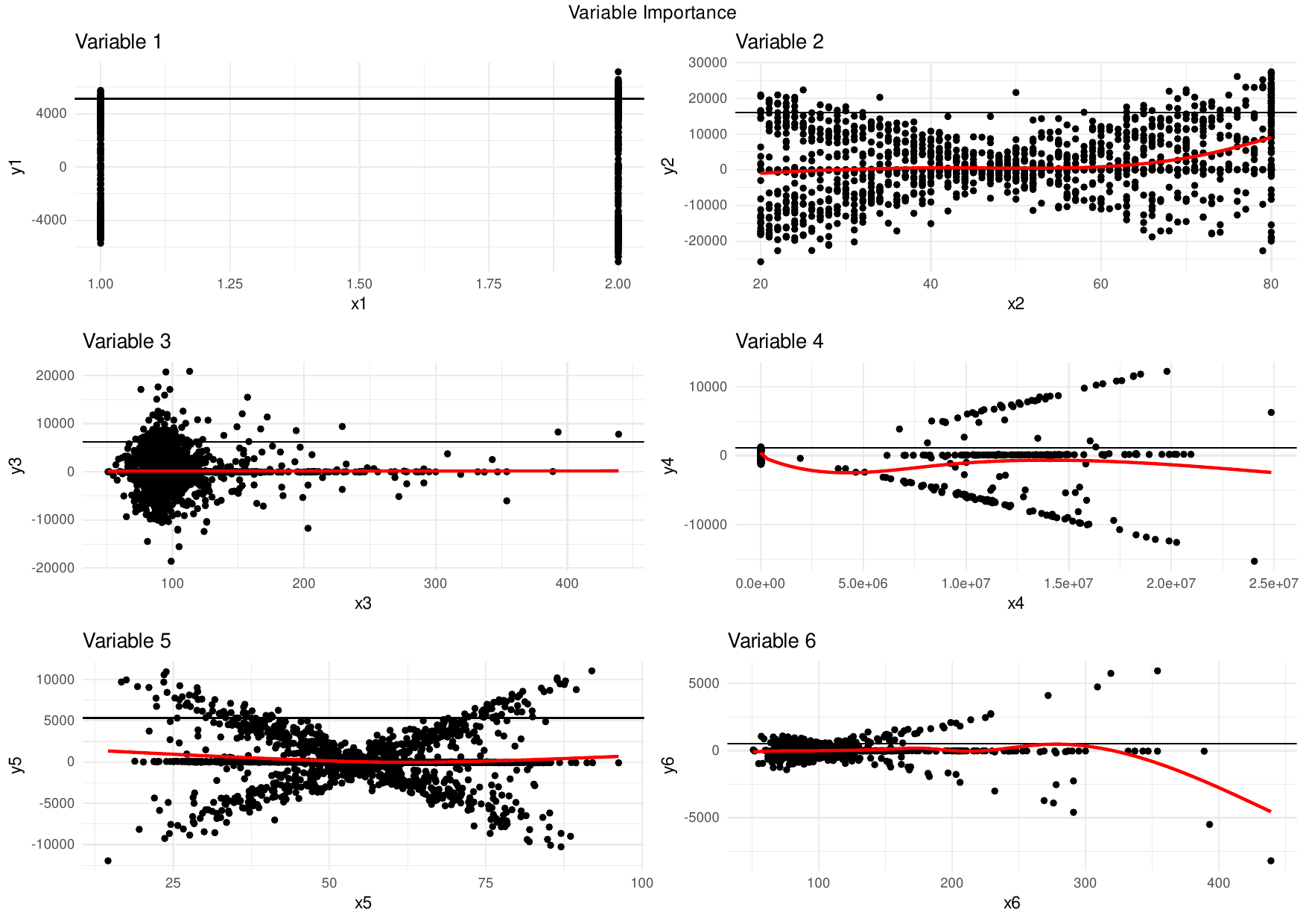}
	\caption{An example of the new local variable  model, including six predictors. The first predictor is a binary variable representing the individual's sex, and the remaining variables are continuous biomarkers. Based on the results obtained, all variables except for variables 3 and 5 were found to be important globally in this example.}
	\label{fig:sel}
\end{figure}
  
\noindent In this section, we present a  variable selection model that can be applied to any arbitrary regression models operating within metric spaces. Before we begin, it is important to clarify that we utilize uncertainty quantification algorithms applied over a transformed random variable \(W\) on the real line, then we can apply over traditional conformal inference techniques. The critical  modeling step is to combine the diferent steps of our uncertainty algorithms to create a variable selection algorithm using scalar methods.

\noindent  For simplicity, we consider that the underlying regression function \( m \) is defined through the global Fr\'echet Regression model

\begin{equation}\label{eqn:global}
	m(x)=  \argminB_{y \in \mathcal{Y}} \mathbb{E}\left[\omega\left(x,X\right) d_{1}^{2}(Y,y) \right],
\end{equation}

\noindent where $\Sigma$ is the covariance matrix of $X$, $\mu= \mathbb{E}(X)$, $\omega(x,X)=  \left[1+(X-\mu)\Sigma^{-1}(x-\mu)\right]$.

\noindent Let \(m(\cdot)\) denote the regression function with the $p$- covariates and \(m_{-j}(\cdot)\) denote the regression function obtained by excluding the \(j\)th variable from the model. To evaluate the relevance of the \(j\)th predictor, we introduce the random variable \(W(X,Y)= d_{1}^{2}(Y,m_{-j}(X))-d_{1}^{2}(Y,m(X))\) such that $\mathbb{E}(W(X,Y))\leq 0$. From a theoretical point of view, under the null hypothesis, \(H_0: j \in \{1,\dots,p\}\) is an irrelevant predictor, the random variable \(W(X,Y)\) is equal to zero with probability one. In order to test this null hypothesis, we construct a univariate prediction interval following the methodology established in Sections \ref{sec:homo} and \ref{sec:hetero} respectively (for $\mathcal{Y}=\mathbb{R}$), denoted as \(\widetilde{C}^{\alpha}(X)\), where \(\mathbb{P}(W(X,Y) \in \widetilde{C}^{\alpha}(X)) \geq 1-\alpha\), and we check if this interval contains only positive values. This hypothesis can be tested locally and globally for a pre-specified confidence level \(\alpha \in [0,1]\).

\noindent The new variable selection method offers several significant advantages over existing variable selection methods in metric spaces \cite{tucker2021variable}. Firstly, it provides a local measure of the importance of the $j$th variable, as demonstrated in Figure \ref{fig:sel}. Unlike its competitors, our method can be applied in more general settings beyond linear models, including additive or semiparametric models \cite{lin2023additive}, and it effectively handles categorical predictors (with two or more levels as group-Lasso models \cite{wang2008note}). Notably, the existing variable selection methods for metric space responses \cite{tucker2021variable} inform about the global importance of the $j$th predictor, while our approach is local, being more flexible and informative about the contribution of a predictor in a specific region of the prediction space.

\noindent Secondly, our method does not necessitate variable standardization, making it more practical and straightforward to use. Additionally, it provides a \(p\)-value that reflects the strength of each variable, enabling an assessment of their significance.

\noindent To obtain a global \(p\)-value indicating the significance of each variable, we introduce an additional notation. We define the functions \(\widetilde{W}_{j}(x,y)= d_{1}^{2}(y,\widetilde{m}_{-j}(x))-d^{2}_{1}(y,\widetilde{m}(x))\) and consider the median or integrated median computed from the random elements of the sample \(\mathcal{D}_{\text{test}}\). For instance, we calculate \(\widetilde{w}_{j}= \frac{1}{|\mathcal{D}_{\text{test}}|} \sum_{i\in \mathcal{D}_{\text{test}}} \widetilde{W}_{j}(X_i,Y_i)\) and employ a \(t\)-test to evaluate the null hypothesis \(H_{0}:w_{j}\leq 0\) against \(H_{a}:w_{j} > 0\). Alternatively, a Wilcoxon test can be used, which tends to be more robust in the presence of outliers. In this paper, we employ a Wilcoxon test to assess the significance of each variable and apply a Bonferroni correction to adjust the confidence level for multiple comparisons. Along this paper, we implement this variable selection procedure with the homoscedastic algorithm defined in Section \ref{sec:homo}.
	
	\section{Simulation study}

	\noindent We conducted a comprehensive simulation study to rigorously evaluate the empirical performance of the proposed uncertainty quantification framework with finite samples. The study encompassed a wide range of scenarios and diverse data structures.

	\subsection{Multidimensional Euclidean data}
	
	\subsubsection{Homoscedastic case}
	
\noindent To evaluate the methods performance under homoscedastic conditions and controlled settings, we conducted a simulation study based on the classical multivariate Gaussian linear regression model. Specifically, we consider a $p$-dimensional Gaussian predictor variable $X = (X_1, \dots, X_p)^\top \sim \mathcal{N}((0, \dots, 0)^\top, \Sigma)$, where $(\Sigma)_{ij} = \rho \in [0,1]$ for all $i\neq j$ and $(\Sigma)_{ii} = 1$ for all $i=1,\dots,p$. Additionally, we consider an $s$-dimensional random response variable $Y = (Y_1, \dots, Y_s)^\top \in \mathbb{R}^{s}$, modeled by the functional relationship:

\begin{equation}
Y = m(X) + \epsilon = X^\top \beta + \epsilon,
\end{equation}

\noindent where $\beta$ is a $p \times s$ matrix of coefficients with all entries equal to 1. The random error $\epsilon = (\epsilon_1, \dots, \epsilon_s)^\top \sim \mathcal{N}(0, \mathrm{diag}(1, \dots, 1))$ is assumed to be independent from $X$ and satisfies $\mathbb{E}(\epsilon|X) = 0$.

\noindent To assess the performance of our methods under various scenarios, we conducted a comprehensive simulation study with multiple cases. We explored different configurations for the values of $s$ and $p$, as well as various sample sizes, confidence levels, and correlation structures for the covariance matrix $\Sigma$, governed by the parameter $r\in [0,1]$. Specifically, we considered $s \in \{1,2,5\}$ and $p \in \{1,2,5,10\}$, while the sample size $n$ varied between $500$ and $5000$ data points. The confidence levels $\alpha$ ranged from $0.05$ to $0.5$, and we set correlation parameters $r$ equal to $0, 0.2, 0.4$, and $0.6$. Here, we only show the results in an intermediate range with $s=2$, $p=0.5$, $r=0.2$, and $n \in \{500,1000,5000\}$.

\noindent To ensure the robustness of our findings, we employed a training set of size $|\mathcal{D}_{train}| = n/2$ and a test set of size $|\mathcal{D}_{test}| = n - |\mathcal{D}_{train}|$. Each scenario was repeated $500$ times, and we evaluated the finite marginal coverage in an additional test set $|\mathcal{D}_{test_{2}}| = 5000$ drawn from the same probability law.

\noindent For estimation purposes, we obtained estimates of the regression function \(m\) using a multivariate response linear regression model with the standard mean square criteria \cite{seber2003linear}. Subsequently, we applied Algorithm \ref{alg:metd1}, as defined in Section \ref{sec:homo}, to generate prediction regions. The results for various confidence levels and sample sizes are illustrated in Figure \ref{fig:simul1}. It is evident that the algorithm consistently maintains the desired marginal coverage, and as the sample size increases, the variability decreases. This observation indicates the good performance of our algorithm across diverse scenarios, in accordance with its non-asymptotic guarantees and theoretical consistency properties.

	\begin{figure}[ht!]
	\centering
	\includegraphics[width=0.9\linewidth]{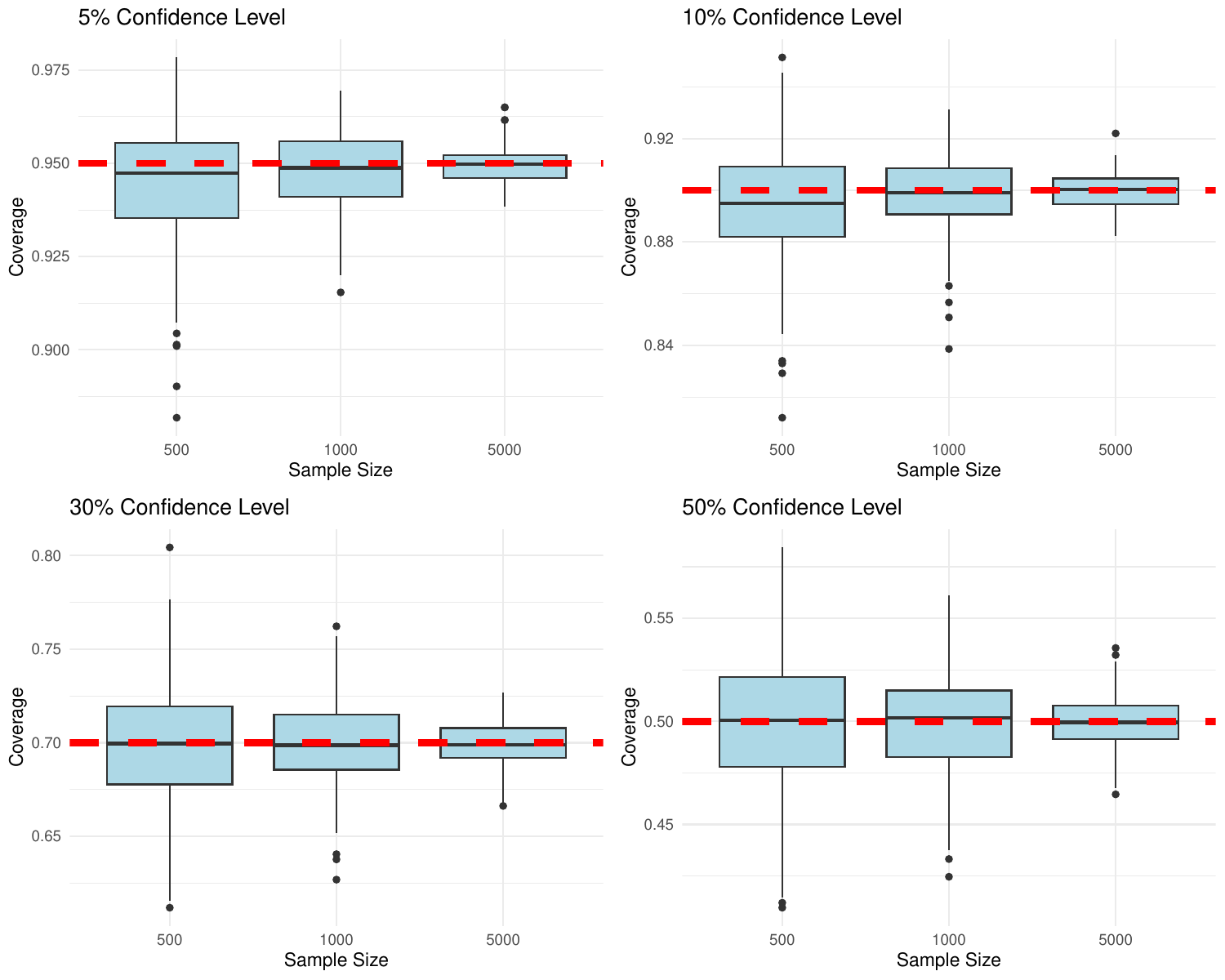}
	\caption{The figure shows the distribution of the marginal coverage $\mathbf{P}(Y\in \widetilde{C}^{\alpha}(X))$ in the homoscedastic case across different sample sizes and confidence levels. The results of our analysis provide insights into how the marginal coverage varies as a function of sample size and confidence level.}
	\label{fig:simul1}
\end{figure}

	\subsubsection{Heterocedastic case}
	
\noindent To account for heteroscedasticity in the simulation scenarios, we modify the previous simulation scheme and incorporate an additional term $\sigma: \mathbb{R}^{p}\to \mathbb{R}$ that represents the heteroscedastic term in the model. Specifically, we set the random error term to be $\sigma(X)\epsilon$, where $\sigma(x) = \norm{x}_2$ is a function that maps each feature vector to a scalar value corresponding to their norm. The overall generative model we consider is:

\begin{equation}
Y = m(X)+\sigma(X)\epsilon = X^{\intercal}\beta + \norm{X}_2\epsilon,
\end{equation}

\noindent To estimate the regression function, we use linear regression without incorporating the heteroscedastic covariance structure in the model estimation. Although this estimator is not the most efficient in the sense of the Gauss-Markov theorem \cite{harville1976extension}, which deals with unbiased and minimum variance characters, the estimator remains consistent.

\noindent In this case, we use the algorithm presented in Section \ref{sec:hetero}, denoted as Algorithm \ref{alg:hete}, and utilize the k-nearest neighbor (kNN) algorithm with varying values of $k$: $k=10, 20, 50,$ and $100$. We evaluate the marginal coverage of the resulting predictions to assess the accuracy of the model. The results of our analysis are presented in Figure \ref{fig:simul2}, following a format similar to the previous example. This figure offers  insights into our methods performance across various values of $k$ and assists in determining the optimal choice of $k$. The selection of this optimal value is guided by specific uncertainty quantification results, which are tailored to the characteristics of the dataset and the coverage achieved during the analysis of the results.

\noindent The analysis of empirical results highlights the sensitivity of the method's performance to the choice of the parameter $k$, especially when dealing with small confidence levels and situations where multiple neighbors are required to approximate extreme quantiles. Across various dimensions of the response variable and predictors, we see that the method maintains stable performance as the sample size increases. However, it is important to acknowledge that in the heteroscedastic case, tuning the $k$ parameter on a case-by-case basis may be necessary. We are well aware, both from empirical evidence regarding the use of k-nearest neighbors (kNN) for estimating conditional means and theoretical results, that the choice of the smoothing parameter $k$ is of utmost importance in achieving estimators with good empirical properties (see \cite{zhang2018novel}). 


\noindent Lastly, we must note that at a median level corresponding to $\alpha$ parameter of $0.5$, the marginal coverage is consistent across different situations examined, demonstrating the robustness of the method in detecting prediction regions that contain half of the sample.

\begin{figure}[H]
  \centering
  \includegraphics[width=0.90\textwidth]{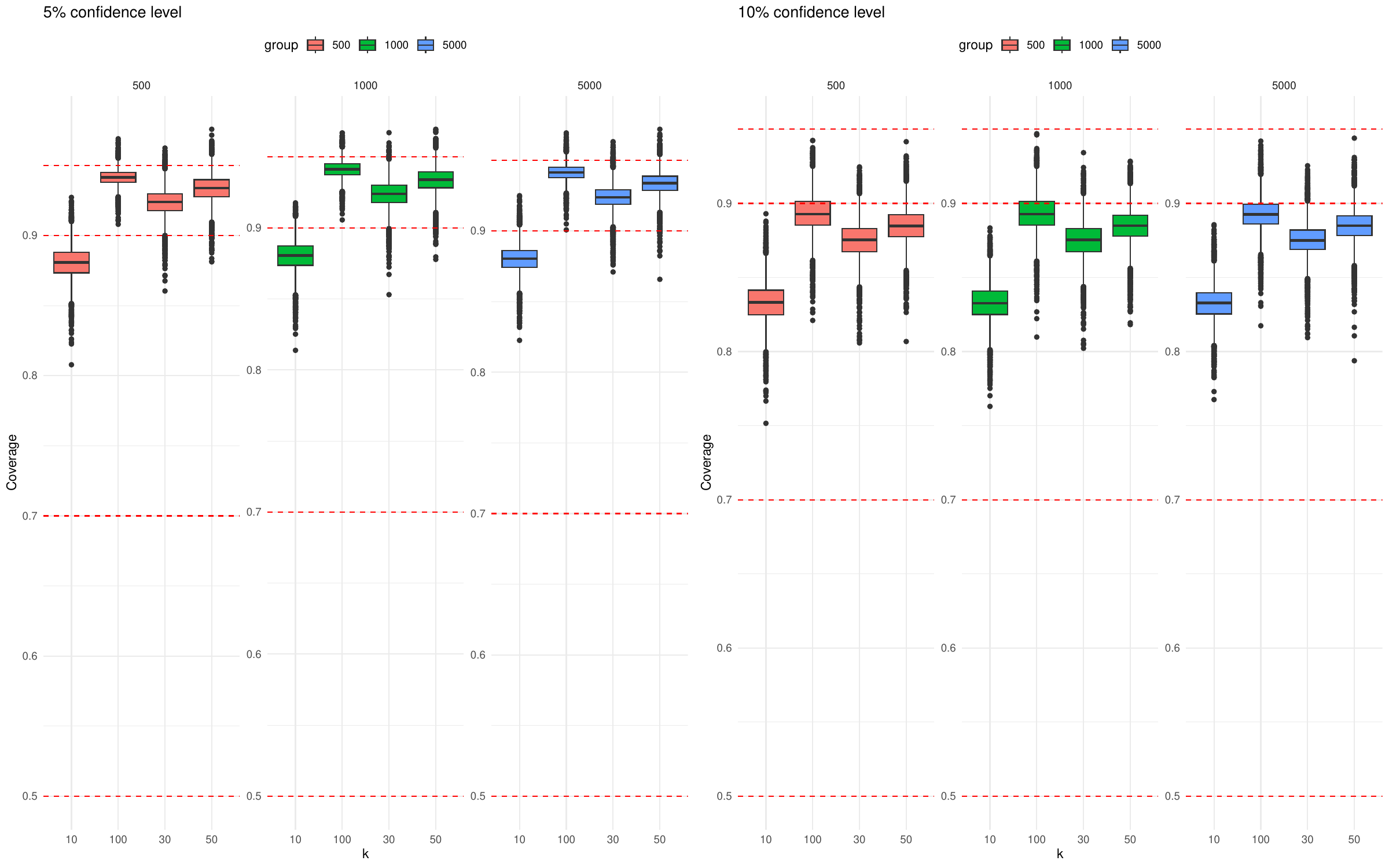}
  \label{fig:combined_boxplots}
  
  \vspace{1em} 
  
  \includegraphics[width=0.90\textwidth]{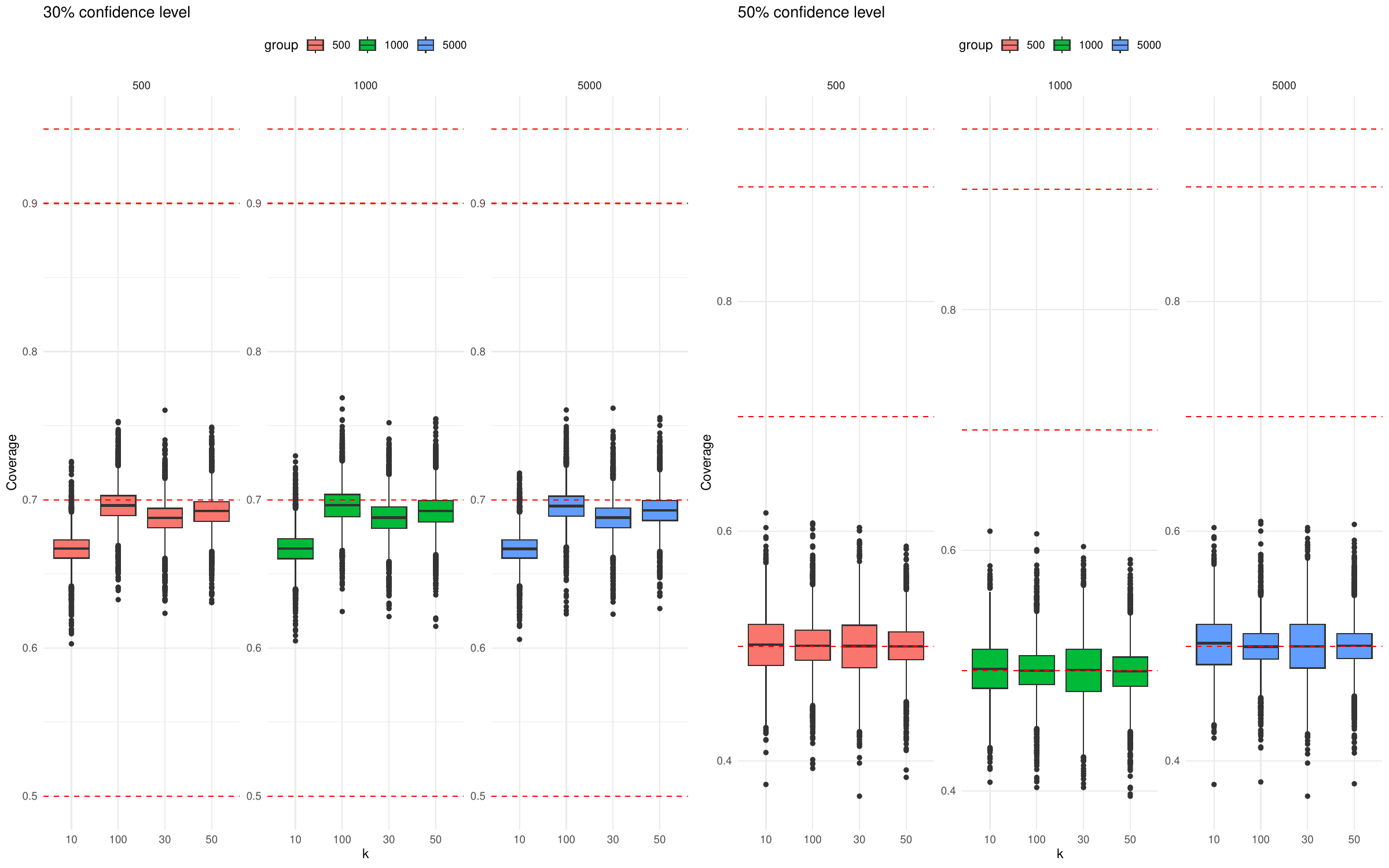}
  \label{fig:individual_boxplots}
  
  \caption{The figure illustrates how the distribution of the marginal coverage $\mathbf{P}(Y\in \widetilde{C}^{\alpha}(X)$ in the heterocedastic case varies across different sample sizes and confidence levels as the number of neighbors changes. Our results provide insights into the behavior of the marginal coverage and the impact of the number of neighbors on the performance of our method.}
  \label{fig:simul2}
\end{figure}

	\subsection{Examples of probability distribution}

	\subsubsection{Uncertainty quantification}
	
	 \noindent Let us revisit Example \ref{ejempldos} from the introduction, , where we develop a model to represent observations drawn from a latent continuous process by utilizing their distribution function. Formally, we define the spaces \(\mathcal{X} = \mathbb{R}^p\) and \(\mathcal{Y} = \mathcal{W}_2(\mathbb{R})\). Here, \(\mathcal{W}_2(\mathbb{R})\) represents the \(2\)-Wasserstein space, equipped with the \(2\)-Wasserstein metric \(d^{2}_{\mathcal{W}_2}(F,G) = \int_{0}^1 (Q_F(t)-Q_G(t))^2 dt\).  \(Q_F\) and \(Q_G\) denote the quantile functions of the distribution functions \(F\) and \(G\), respectively.

\noindent For our specific simulation purposes, we suppose that the underlying generative model is defined:

\[Z(j) = 100 + 5 \cdot \sum_{l=1}^{p} X_l + \epsilon(j), \quad j = 1, \dots, n.\]

\noindent Let

\begin{equation}
\widetilde{F}(\rho) = \frac{1}{n} \sum_{j=1}^{n} \mathbb{I}\{Z(j) \leq \rho\}.
\end{equation}

\noindent  We set \(n = 300\). The remaining parameters are established as in the previous simulations related to the multivariate Euclidean data, with \(p=5\) and \(n_{i}\) ranging from \(500\) to \(5000\). We use a training set of size \(|\mathcal{D}_{\text{train}}| = n/2\) and a test set of size \(|\mathcal{D}_{\text{test}}| = n - |\mathcal{D}_{\text{train}}|\), and repeat each scenario \(500\) times, evaluating the finite marginal coverage at additional test points \(|\mathcal{D}_{\text{test2}}| = 5000\) using the same probability law as the pair \((X,Z)\).

\noindent For estimation purposes, we utilize the Global-Fréchet model equipped with the \(2\)-Wasserstein distance to predict the quantile response variable. We consider the Euclidean distance, denoted as \(d_2\), between the predicted quantiles and the observed estimated quantile function. The confidence level \(\alpha\) is fixed with values varying from \(0.05\) to \(0.5\). It is important to note that in our example, we use estimated quantile functions from a time series as the response variable, introducing additional sources of uncertainty and making it challenging to obtain exact marginal control. Despite this, the underlying model follows a homoscedastic definition introduced in Section \ref{sec:def}. The results, presented in figure \ref{fig:simul3}, confirm this hypothesis and indicate that the obtained coverage is anti-conservative (below the nominal value) but very close to the real nominal values. Again, we  note that at a median level corresponding to \(\alpha=0.5\), the marginal coverage is consistent across different situations examined, demonstrating the robustness of the method in detecting prediction regions that contain half of the sample.
 

	\begin{figure}[H]
		\centering
		\includegraphics[width=0.9\linewidth]{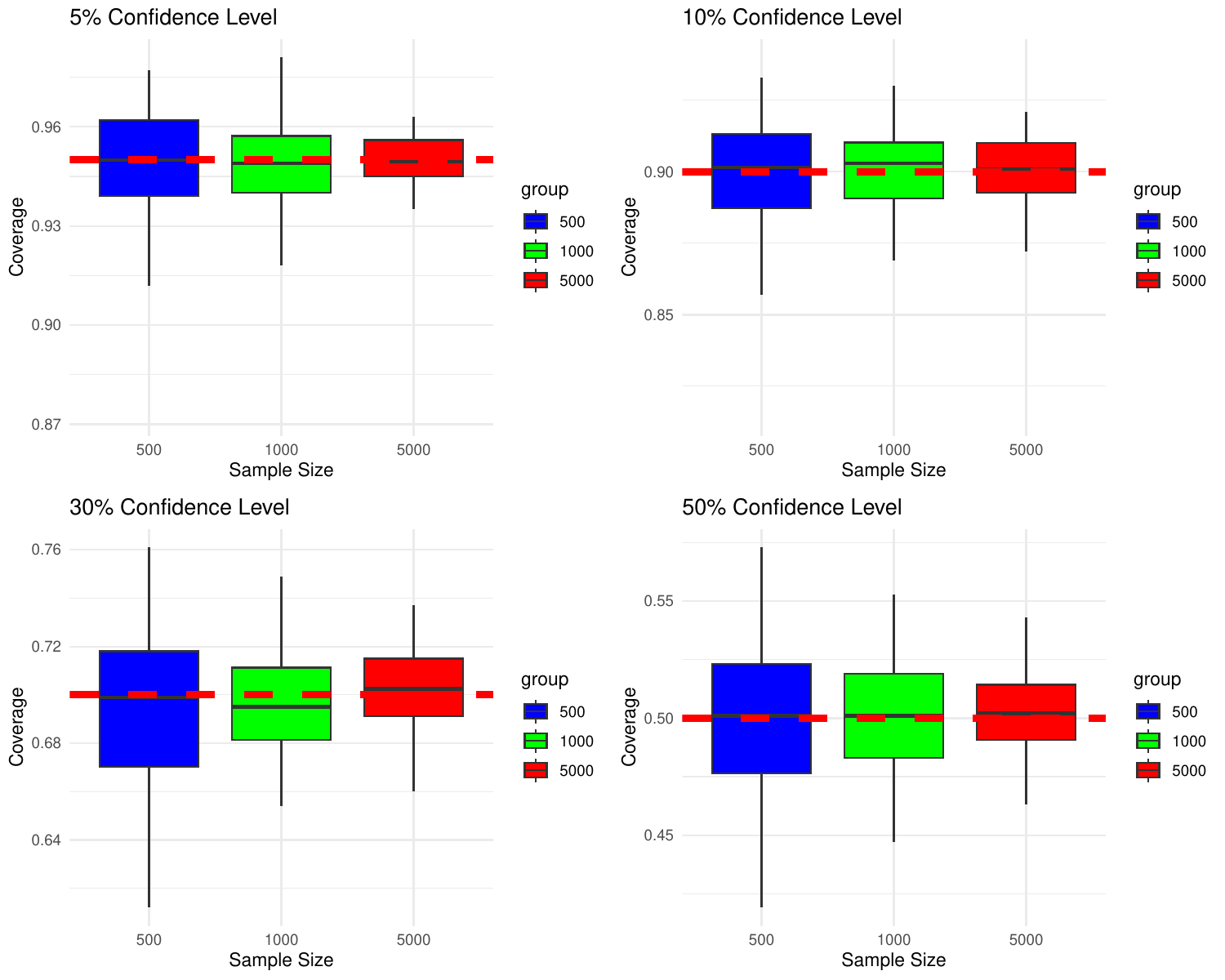}
		\caption{The figure presents the distribution of the marginal coverage across different confidence levels and sample sizes in the distributional data analysis example. The analysis utilized estimated quantile functions from a time series as the response variable, introducing additional sources of variability and making it challenging to obtain exact marginal control, despite the underlying regression model being homoscedastic. Nonetheless, the results indicate that the method is effective in obtaining conservative coverage levels that are very close to the true values.}
		\label{fig:simul3}
	\end{figure}

	\subsubsection{Variable selection}
\noindent Continuing with the previous example, we now consider the case where the number of variables that appear in the  generative model is equal to $p_{\text{true}}=1$ (the first covariate for simplicity) across all scenarios. For each $j=1,\dots,n$, the specific generative model is given by

\[
Z(j) = 100 + 5 \cdot X_{1} + \epsilon(j).
\]

\noindent To identify the most relevant variable for the model, we employ the variable selection strategies outlined in Section \ref{sec:sel}, with a confidence level of $\alpha = 0.05$. For this purpose, we utilize the Global-Fréchet regression model as a base regression model to predict the empirical quantile function.

\noindent An important point in the variable selection modeling strategy, highlighted in Section \ref{sec:sel}, is that we are testing simultaneously $p$-hypotheses, and we must address the statistical problems of multiple comparisons. In this paper, we implement the Bonferroni method for multiple comparisons. Table \ref{table:resultssel} displays the results across different sample sizes, demonstrating the statistical consistency of the variable selection approach as the sample size increases in this example.

\begin{table}[htbp]
    \centering
    \resizebox{\textwidth}{!}{%
        \begin{tabular}{@{}lccc@{}}
            \toprule
            & \textbf{Detect True Variables (\%)} & \textbf{\begin{tabular}[c]{@{}c@{}}One Variable\\ False Positive (\%)\end{tabular}} & \textbf{\begin{tabular}[c]{@{}c@{}}Two or More Variables\\ False Positive (\%)\end{tabular}} \\
            \midrule
            \textbf{$n=500$}  & 100 & 68 & 26 \\
            \textbf{$n=1000$} & 100 & 29 & 4  \\
            \textbf{$n=5000$} & 100 & 3  & 1  \\
            \bottomrule
        \end{tabular}%
    }
    \caption{Results of Simulations for Variable Selection Approach}
    \label{table:resultssel}
\end{table}
	
	\section{Data analysis examples}
	
In this section, we demonstrate the wide-ranging utility and adaptability of our algorithm across different data structures and statistical modeling scenarios through a series of real-world examples. Our emphasis is on showcasing the algorithm's applicability in modern medical contexts.

\noindent
The initial example explores situations lacking predictors, utilizing insights from shape analysis to characterize Parkinson's disease using digital biomarkers. This contrasts with the next four examples, which examine pairs \((X, Y) \in \mathcal{X} \times \mathcal{Y}\), offering a broader perspective on the algorithm's performance in regression analyses. 

\subsection{Shape Analysis}

\subsubsection{Motivation}

Shape analysis is a research area that requires novel statistical methods in metric spaces \cite{dryden2016statistical,steyer2023elastic} and has broad applications in the biomedical sciences. This approach is particularly useful when analyzing random objects based on their geometrical properties. Here, we are motivated to analyze the shapes of handwritten samples from patients with a neurological disorder versus those from a control group of healthy individuals. The scientific problem we aim to address in this section is to define uncertainty prediction regions for healthy individuals at some confidence level $\alpha \in (0,1)$, using handwriting trajectories. Assuming that individuals with the neurological disorder exhibit behaviors that deviate significantly from those expected of the control group in their writing shapes. We use these prediction regions to identify patients who differ markedly from the control group and to detect and characterizate outlier behavior.

\subsubsection{Data Description}

This study employs data from \cite{isenkul2014improved}, comprising $n_{1}=15$ control subjects and $n_{2}=30$ individuals diagnosed with Parkinson's disease, to analyze handwriting shapes captured through digital tests. Our objective is to establish expected predictions regions at different confidence levels $\alpha$ based on the control group's data. For this purpose, first we estimate the empirical  Fréchet mean using Euclidean distance $d^{2}_{1}(\cdot,\cdot)= \|\cdot-\cdot \|^{2}$. Subsequently, we adopt $d_{2}(\cdot,\cdot)= \|\cdot-\cdot \|_{\infty}$ to graphically represent the tolerance regions for various individuals across confidence levels, utilizing the convex hull of extreme points for visualization purposes.

\subsubsection{Results}

Figure \ref{fig:firma} illustrates the handwriting trajectories alongside tolerance regions for confidence levels $\alpha=0.5$ and $\alpha=0.7$, including those from individuals with Parkinson's disease (yellow color). For individuals affected by Parkinson's, employing tolerance regions at $\alpha=0.5$ enables distinguishing over 75\% of the patients by examining whether their handwriting shape falls within or outside these predefined regions. These findings are encouraging, showcasing the potential of our uncertainty quantification approach in validating novel digital biomarkers for contemporary medical applications, such as analyzing handwriting patterns in the data from the clinical study examined.

\begin{figure}[ht!]
    \centering
    \includegraphics[width=0.8\textwidth]{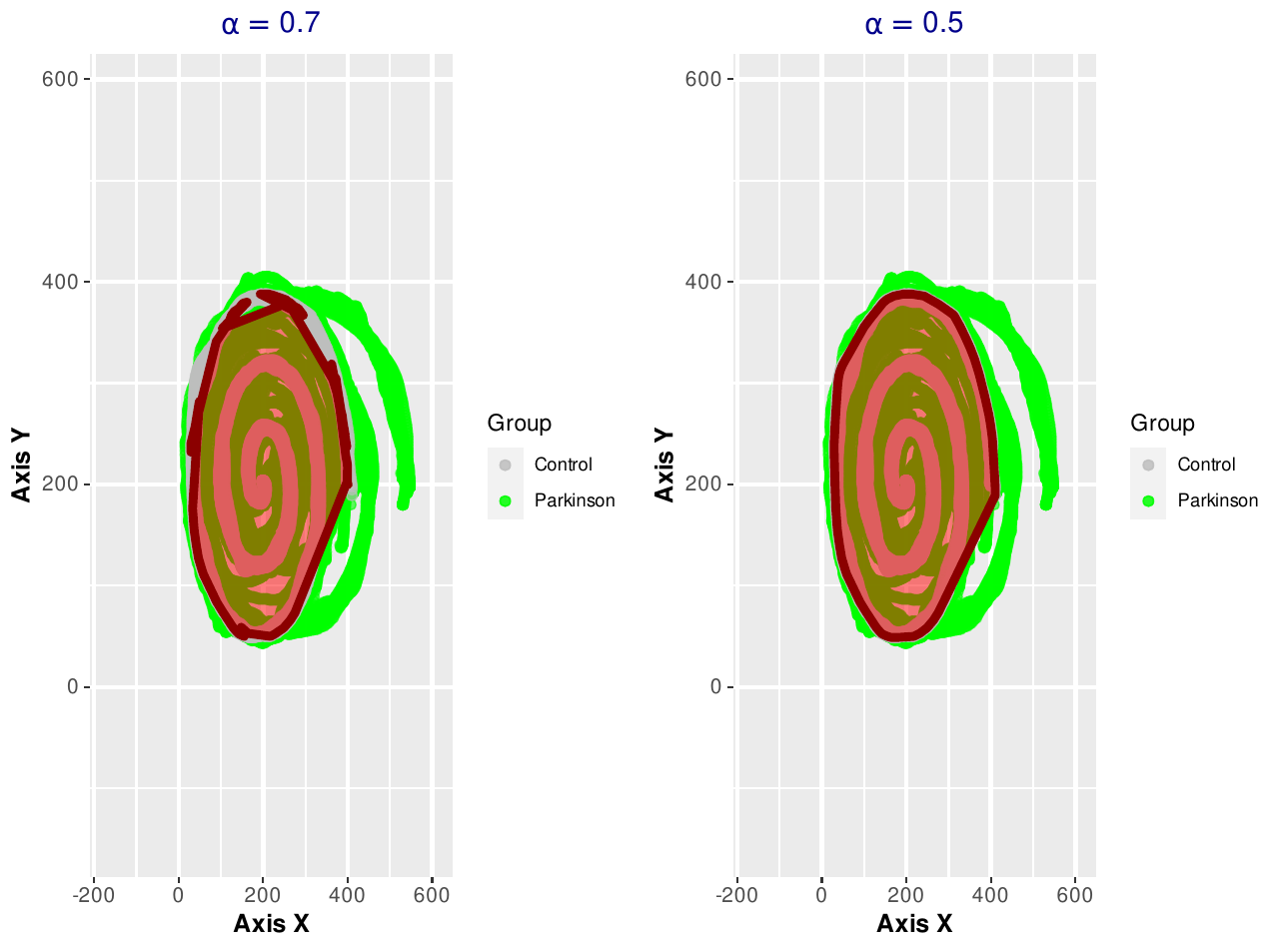}
\caption{Tolerance regions in expectations for various confidence levels in the written test using control individuals as a reference. The plots depict the raw trajectories of both control and Parkinson's patients. We do not introduce covariates in the derivation of prediction regions for the Fréchet mean.}
    \label{fig:firma}
\end{figure}

\subsection{Multidimensional Euclidean data}

\subsubsection{Motivation}

\noindent Multivariate Euclidean data, denoted as $\mathcal{Y} = \mathbb{R}^{m}$ $(m>1)$, is increasingly prevalent in clinical outcome analysis, particularly in the integration of multiple biomarkers to enhance diagnostic accuracy for various diseases. This approach is exemplified in diabetes research, where at least two biomarkers -- Fasting Plasma Glucose (FPG) and Glycosylated Hemoglobin (A1C) -- are considered in the diagnosis of the disease. In this simple $2-$dimensional example, providing naive uncertainty quantification measures is far from trivial due to the lack of a natural order in $\mathbb{R}^{2}$. Moreover, bivariate quantile estimation methods can depend on a specific notion of order tailored to each problem, as discussed in \cite{Klein2020}.

\noindent This section aims to illustrate our uncertainty quantification framework, which can accommodate any distance \(d_2\) to define the prediction region \(C^{\alpha}(\cdot)\), and overcome technical difficulties associated with defining a multivariate quantile.  To emphasize the mathematical intuition behind our methods for defining prediction regions using balls, we consider the standard Euclidean distance \(d_2^2(x, y) = \sum_{i=1}^{m}(x_i - y_i)^2\) for all \(x, y \in \mathbb{R}^m\). Continuing with the discussion on diabetes, we focus on a scientific application in this field.

\noindent To demonstrate the  application of our methodology, we consider the mentioned biomarkers: FPG and A1C as a response of the regression models that in this case will be a multivariate--response linear model. The goal is to create a predictive models to estimate the continuous outcomed used to diagnostic and monitoring the progression of diabetes mellitus disease.

\noindent Uncertainty analysis is crucial in clinical prediction models, as it helps understand the advantages and disadvantages of these models. In cases where there is significant uncertainty in the regression model $m$, it may be necessary to construct a more complex model incorporating new patient characteristic attributes. Alternatively, considering more sophisticated regression models, such as neural networks, might enhance the predictive capacity of the models.

	 
\subsubsection{Data Description}

\noindent We conducted an extensive study using a comprehensive dataset from the National Health and Nutrition Examination Survey
 (NHANES) cohort (see more details \cite{matabuena2022handgrip}), comprising a substantial number of individuals ($n = 58,972$).  For our analysis, we utilized predictor and response variables described in Table \ref{table:pacientes}, and applied a linear multivariate regression model to estimate the function $m$. We considered $k = 200$ as the number of neighbors in the kNN method.

\noindent Our study aimed to address two primary objectives: firstly, to demonstrate the remarkable computational efficiency of our algorithm, and secondly, to underscore the importance of quantifying uncertainty in standard disease risk scores. In an ideal clinical context, the primary clinical goal would be to establish a simple and cost-effective risk score that could accurately capture patients' glucose conditions without relying on traditional diabetes diagnostic variables, and stratify the risk of diabetes to promote new public health interventions \cite{matabuena2024deep}. With uncertainty quantification, we can evaluate whether our regression model $m$ can be useful for such clinical purposes.

\subsubsection{Results}

\noindent The implementation of our algorithm in the NHANES dataset in the \textbf{R} software exhibited remarkable efficiency, with execution times of just a few seconds.

\noindent Figure \ref{fig:diabetesnhanes} presents the results to apply the uncertainity quantification region in three patients. We
observe individual uncertainity differences when we predict the A1C and FPG biomarkers
 based on individual clinical characteristics. Patient $(C)$, characterized as young and healthy (see Table \ref{table:pacientes}), is predicted to be in the non-diabetes range with low uncertainty. In contrast, patient $(A)$, who is older, has a high waist circumference, and elevated systolic pressure, is predicted to fall within the diabetes and prediabetes range, despite exhibiting glucose profiles characteristic of a non-diabetic healthy individual. These outcomes underscore the need for more sophisticated variables to accurately predict the true glycemic patient status, as also evidenced by the marginal $R^{2}$ for each variable that takes values equal to $0.12$ and $0.08$, respectively for the A1C and FPG.

\noindent In separate research \cite{matabuena2024deep},
  we provide evidence that we need the incorporate at least one diabetes biomarker that directly measures the glucose of the individuals, such as FPG or A1C, is indispensable for building reliable predictive models. Essentially, this highlights that predicting Diabetes Mellitus is impossible without considering specific glucose measurements or proxy variables for them.

	\begin{figure}[ht!]
		\centering
		\includegraphics[width=0.9\linewidth]{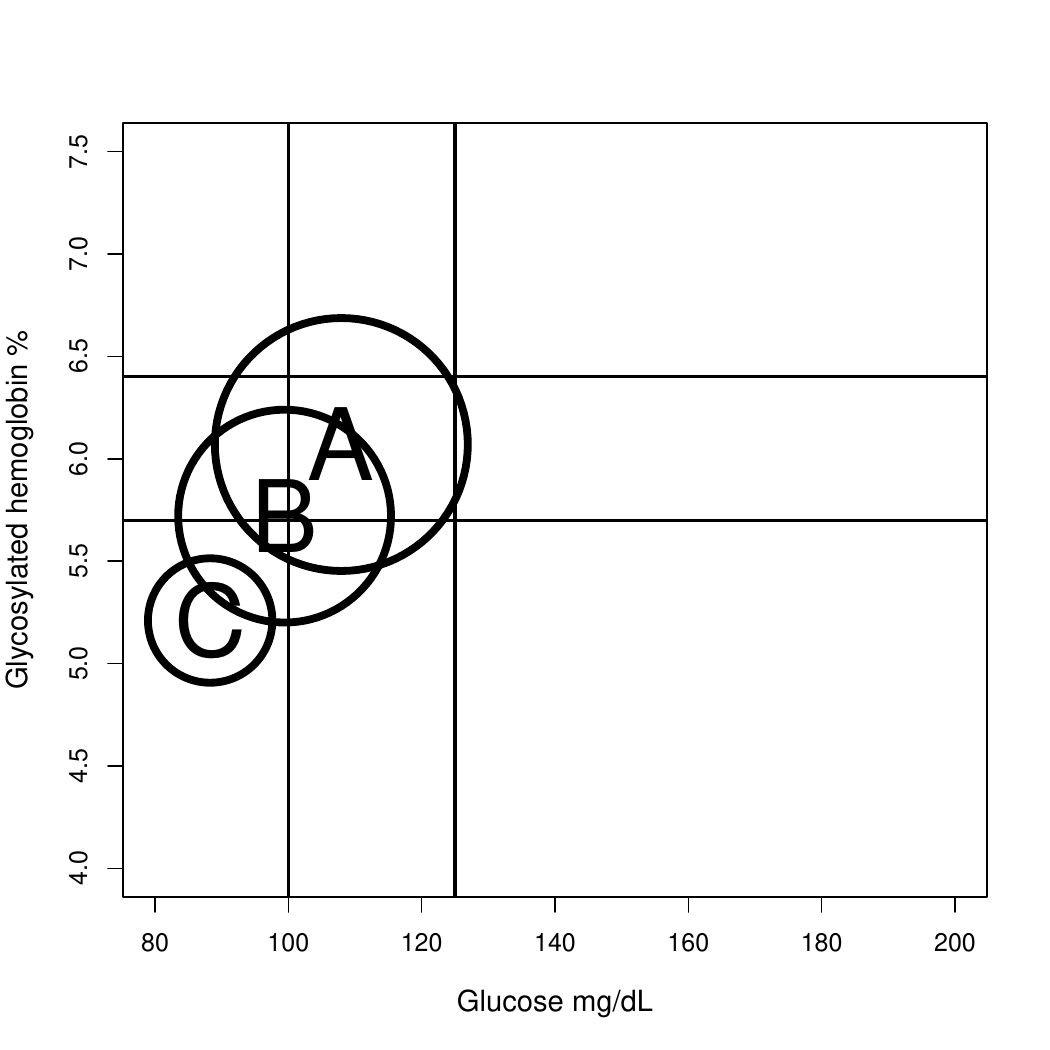}
\caption{Prediction regions for three patients based on the geometry of the Euclidean distance. The black lines in the plot indicate the transition between prediabetes and diabetes status. Patient (C) is normoglycemic, while the other two patients, (A) and (B), exhibit oscillations between two glycemic statuses. Patient (B) is characterized as normoglycemic and diabetic, while patient A shows diabetes and prediabetes. The plot highlights the varying glycemic patterns according to patient characteristics.}
		\label{fig:diabetesnhanes}
	\end{figure}

	\begin{table}[ht]
    \centering\
    \scalebox{0.8}{
    \small 
    \begin{tabular}{cccccccc}
        \toprule
        \textbf{ID} & \textbf{Age} & \textbf{Waist} & \textbf{Systolic Pressure} & \textbf{Diastolic Pressure} & \textbf{Triglycerides} & \textbf{Glucose} & \textbf{Glycosylated Hemoglobin} \\
        \midrule
        45424 (A) & 76 & 114.30 & 144 & 66 & 64 & 91.00 & 5.10 \\
        31858 (B) & 55 & 107.70 & 128 & 70 & 72 & 98.00 & 7.60 \\
        28159 (C) & 23 & 84.60 & 124 & 64 & 147 & 81.00 & 5.40 \\
        \bottomrule
    \end{tabular}}
    \caption{Characteristics of three patients in a real-life example where the response variable is multivariate (bivariate) Euclidean data.}
    \label{table:pacientes}
\end{table}
	
	\subsection{Distributional Representation of Biosensor Time Series}

\subsubsection{Motivation}

\begin{figure}[ht!]
        \includegraphics[width=\linewidth]{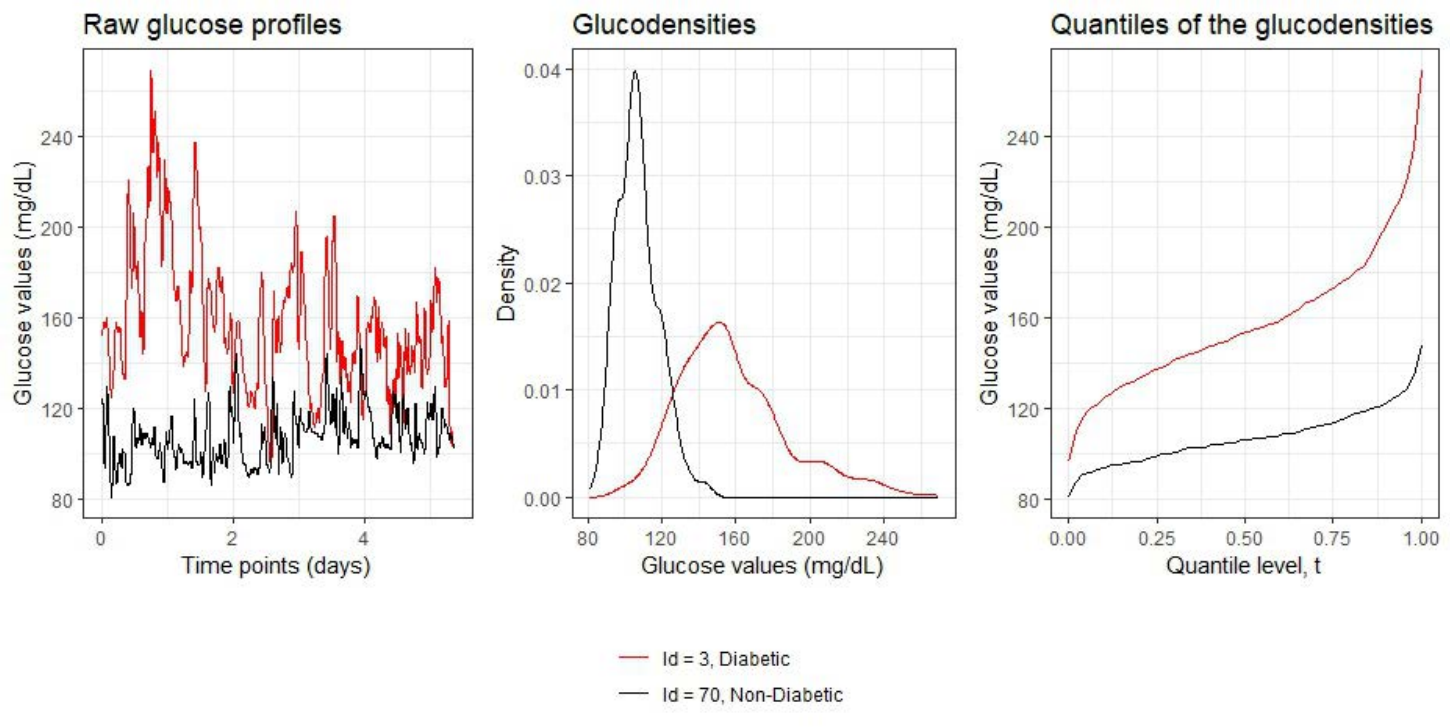}
    \caption{The figure presents the process of transformation CGM time series of two individuals in their marginal density functions and marginal quantile functions.}
    \label{fig:glucosetrans}
\end{figure}

\noindent The field of distributional data analysis has seen significant growth  within data--science communities \cite{ klein2023distributional, matabuena2022kernel,pmlr-v162-meunier22b}, largely due to its critical role in understanding  the marginal distributional characteristics of biological time series in regression modeling. We investigate the probability distributions within continuous glucose monitor (CGM) time series data \cite{doi:10.1177/0962280221998064}, vital for discerning distributional patterns like hypoglycemia,  and hyperglycemia in diabetes research. Figure \ref{fig:glucosetrans} illustrates such mathematical transformation in the case of a one non-diabetic and diabetic individual

\noindent We focus on analyzing  biological outcomes defined in the  space $\mathcal{Y}= \mathcal{W}_{2}(T)
$, comprising univariate continuous probability distributions on the compact support $T = [40, 400] \subset \mathbb{R}$. This work emphasizes "distributional representation" through quantile functions, aiming to enrich our understanding of glucose profiles' impacts from various factors beyond CGM simple summary as  CGM mean and -- standard deviation values estimated with the raw  glucose time series.

\noindent Given a random sample of healthy individuals, the goal of our analysis is to provide the reference quantile glucose values with our uncertainty quantification algorithm across different age groups. The determination of this reference profile can have various applications, such as detecting individuals at high risk of diabetes mellitus, as well as determining threshold criteria considering the latest advancements about glucose homeostasis provided by a CGM monitor. We  note that the glucose values at least, in average glucose levels increase with the age, and according to this biological behaviour, we must move to a medicine  based on personalized criteria. The use of CGM one distributional representations allows to examine this biological  phenomenon in all, low, middle and high-glucose concentrations.

\subsubsection{Data Description}

Data from \cite{shah2019continuous}, which collected glucose values at $5$-minute intervals over several days from non-diabetic individuals in everyday settings, was considered in our analysis. This data was originally collected to establish normative glucose ranges with simple summaries as CGM mean values by age. Extending this foundation, we adopt the distributional perspective of CGM data as our regression model's response, utilizing the $2$-Wasserstein distance (equivalently, quantile representation) in Global Fréchet regression. For analysis and visualization, we apply the supreme distance $d_2(\cdot,\cdot)= \norm{\cdot-\cdot}_{\infty}$, setting the confidence level at $\alpha=0.2$ and the kNN smoothing parameter to $k=30$.

\subsubsection{Results}

The application of our uncertainty quantification algorithm  shows a variation in uncertainty levels across different age groups, as outlined in Figure \ref{fig:glucoseref}. However, 
the analysis reveals a consistent conditional mean of glucose across ages but a marked increase in the uncertainty of quantile glucose values with the increase of the  age of patients. These clinical findings enhance our capability to classify and monitor health in non-diabetics through CGM. This approach to defining disease states offers a pathway towards personalized medicine, as evidenced by its application in our research on personalized sarcopenia definitions \cite{matabuena2022handgrip}, where the response variable was finite-dimensional outcome ($\mathcal{Y}= \mathbb{R}^{2}$).

\begin{figure}[H]
    \centering
    \begin{minipage}[b]{0.45\textwidth}
        \includegraphics[width=\linewidth]{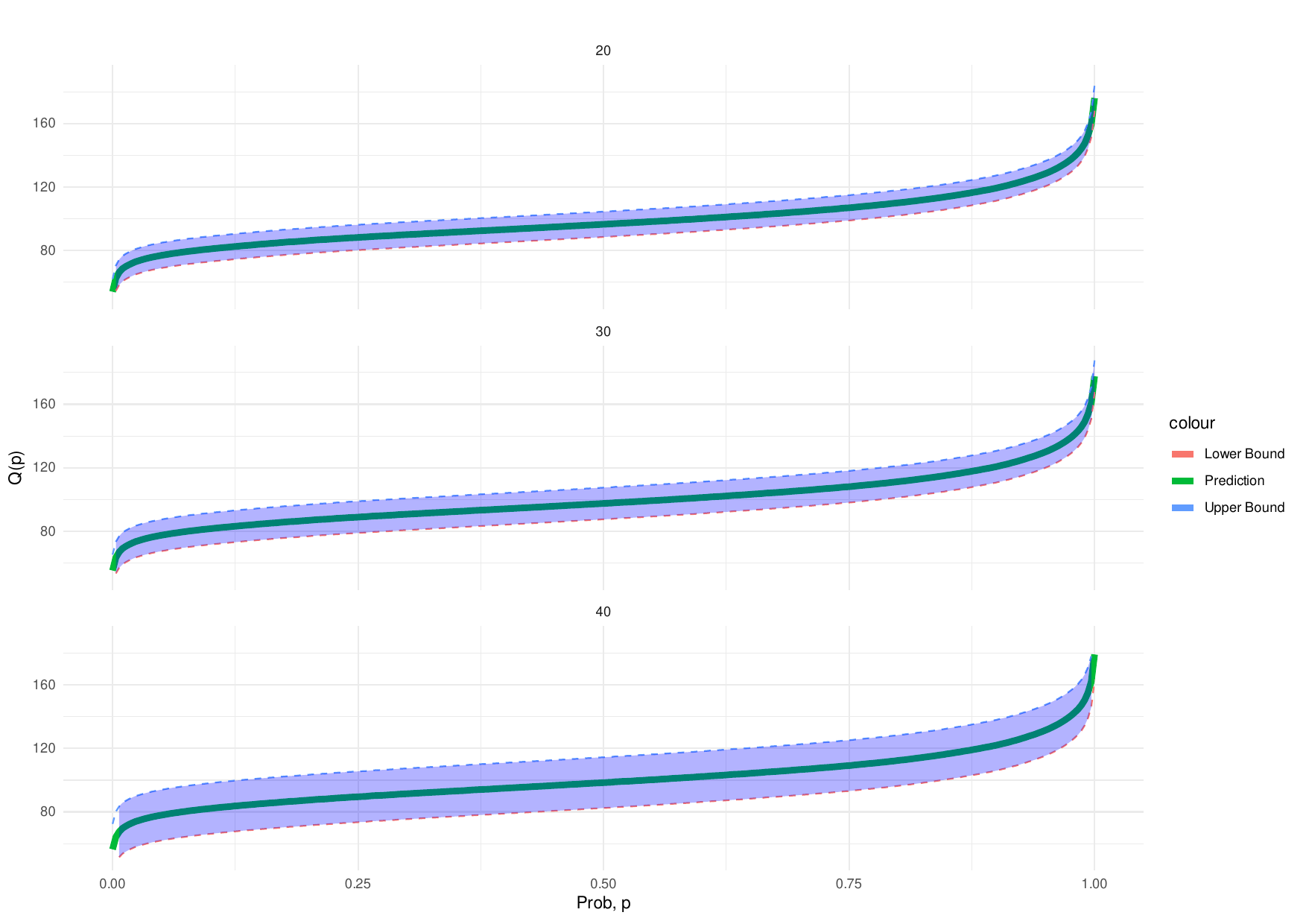}
        \caption*{(a) Middle-age individuals}
    \end{minipage}
    \hfill
    \begin{minipage}[b]{0.45\textwidth}
        \includegraphics[width=\linewidth]{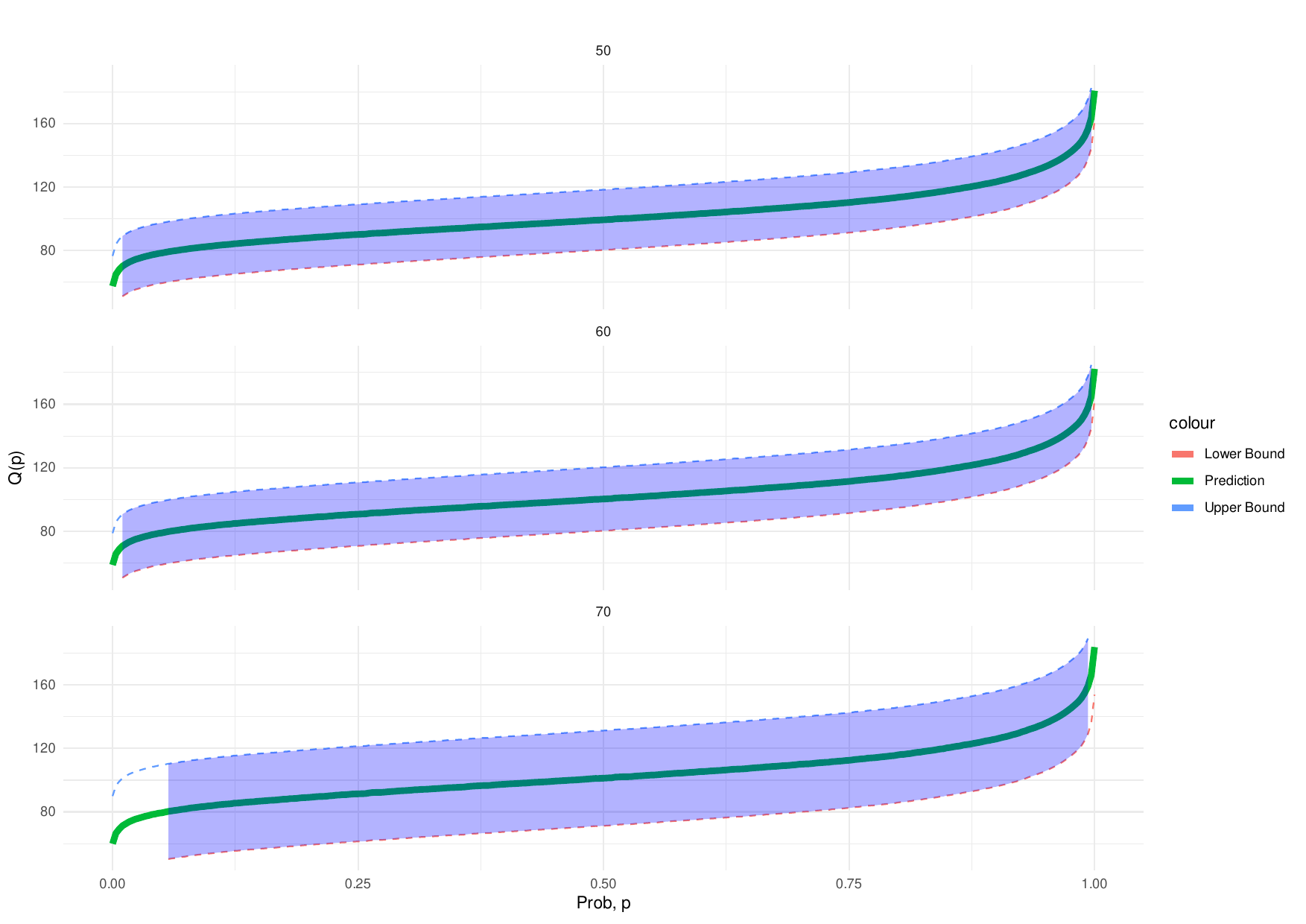}
        \caption*{(b) Elderly individuals}
    \end{minipage}
    \caption{The figure presents the normative continuous glucose monitoring (CGM) distributional values across different age groups, with a confidence level \(\alpha=0.2\). Our analysis indicates that while the conditional mean values remain similar across different age groups, there is an increase in uncertainty as age increases.}
    \label{fig:glucoseref}
\end{figure}



	\subsection{Neuroimage Applications: Laplacian Graphs}

\subsubsection{Motivation}

Neuroimaging represents  important field that is motivated by the development of  functional and metric space statistical  methods in order to improve or udnerstanding  the behaviour of the human brain
\cite{dubey2020functional,dai2021modeling, bhattacharjee2021concurrent, chen2021modeling}. Predominantly, functional magnetic resonance imaging (fMRI) serves as the benchmark for analyzing brain structures \cite{wang2021statistical,chen2021modeling,pervaiz2020optimising}.

\noindent Summarizing fMRI data's unique brain structure signatures involves generating individual profiles. These profiles estimate intra- and inter-regional cerebral connections through both continuous (correlation matrices) and discrete (graph structures) data representations \cite{wang2021statistical}.

\noindent
There is a growing interest in applying uncertainty quantification techniques within neuroimaging. Yet, these methods predominantly employ univariate approaches, highlighting a need for more comprehensive models \cite{ernsting2023groupdifferences}.

\noindent The goal of this analysis is to highlight the versatility of our algorithm to quantify the uncertainty over graphs. For each patient, a graph is avalaible that measures the connectivity between different brain regions.

\subsubsection{fMRI represention with Laplacian graphs}

We begin by examining the continuous domain of Pearson correlation matrices $\mathcal{Y}$ with dimension $r$, utilizing the Frobenius metric $d_{\text{FRO}}(A,B)= \sqrt{\sum_{i,j=1}^{r}(A_{ij}-B_{ij})^{2}}$. This approach extends to the network space, characterized by $r$ nodes and employing the same metric for Laplacian matrices. To align graph spaces with their Laplacian representations, certain technical prerequisites are necessary.

\noindent Consider a network $G_{m}= (V,E)$ with nodes $V= \{v_1,\dots, v_r\}$ and edge weights $E= \{w_{ij}: w_{ij}\geq 0; i,j=1,\dots,r\}$. We assume that $G_m$ is a simple graph, devoid of self-loops or multiple edges and  edge weights  bounded  $0\leq w_{ij}\leq w$.

\noindent \noindent We define the graph Laplacian as the matrix $L=\left(l_{ij}\right)$, where
\[
l_{ij}= 
\begin{cases}
-w_{ij}, & \text{if } i \neq j \\
\sum_{k \neq i} w_{ik}, & \text{if } i = j
\end{cases}
\]
for all indices $i, j = 1, \ldots, r$. This definition leads to the characterization of the space of networks through the associated space of graph Laplacians, defined as:
\[
\mathcal{L}_r = \left\{ L = \left(l_{ij}\right) : L = L^{\mathrm{T}}, L \mathbf{1}_r = \mathbf{0}_r, -W \leq l_{ij} \leq 0 \text{ for } i \neq j \right\},
\]

\noindent where $\mathbf{1}_{r}$ and $\mathbf{0}_{r}$ denote the $r$-vectors of ones and zeroes, respectively.





\subsubsection{Data Description}

Our study leverages an uncertainty quantification framework to analyze neuroimaging data, focusing on fMRI datasets from schizophrenia patients ($n=60$) and a control group ($n=45$) obtained from \cite{relion2019network}.

\noindent We utilized Laplacian graphs constructed from the data, following the preprocessing steps from \cite{relion2019network}. In our analysis, we consider a unique predictor $X\in \{0,1\}$, where $X=1$ indicates the schizophrenia of the individual.

\subsubsection{Results}

Utilizing Global Fréchet regression in the space of Laplacian networks $\mathcal{Y} = (\mathcal{L}_r, d_{\text{FRO}})$, which is equipped with the Frobenius distance applied to graph Laplacians, we estimated the regression function $m$. Subsequently, we applied the heteroscedastic uncertainty quantification algorithm with a fixed confidence level of $\alpha = 0.2$ and a smoothing parameter $k = 20$. Figure \ref{fig:neuro2} illustrates these conditional mean estimations along with their uncertainty intervals. The analysis reveals significant uncertainty, suggesting the potential utility of incorporating additional predictors to reduced the large uncertainity observed in the regression model.
	
	\begin{figure}[ht!]
		\centering
		\includegraphics[width=0.9\linewidth]{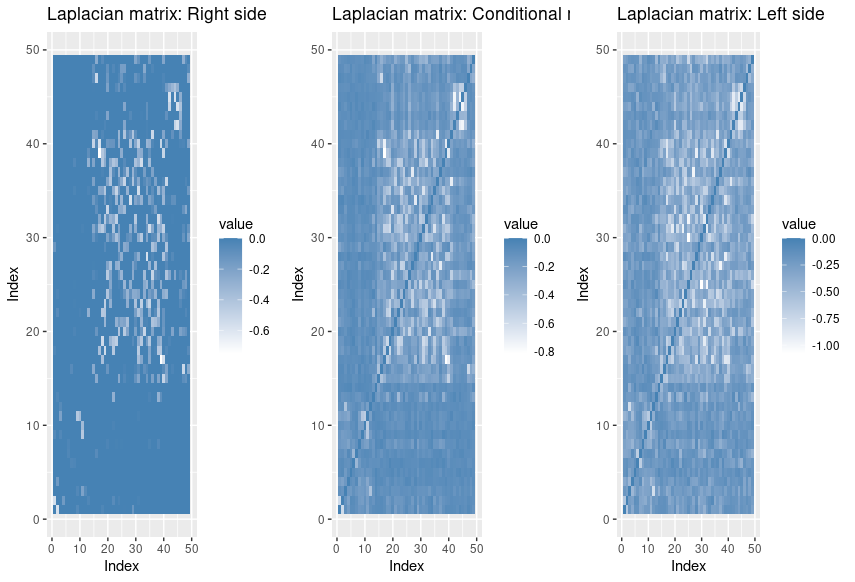}
		\caption{Left: Conditional mean function. Center: Left end of the uncertainty set. Right: Right end of the uncertainty set. Uncertainty analysis from a patient with schizophrenia using as a confidence level $20$ percent.}
		\label{fig:neuro2}
	\end{figure}

	\subsection{Variable selection in physical activity example}

\noindent	Physical exercise is widely recognized as an effective non-pharmacological intervention for a broad range of diseases, as well as a means of mitigating physiological decline with age and improving overall health \cite{harridge2017physical}. Therefore, understanding the patient attributes that contribute to variations in physical activity levels is of great scientific interest and can inform the development of public health policies aimed at reducing disparities in physical activity across different  subpopulations. In this study, we illustrate our variable selection methods for identifying relevant variables that are associated with physical activity levels of an individual.

	\subsubsection{Data Description}
	
	\noindent This study used data from the combined 2011-2014 NHANES cohort, which involved a comprehensive array of standardized health examinations and medical tests. These evaluations covered biochemical, dietary, physical activity, and anthropometric assessments. To measure physical activity levels at high resolution, the study utilized the ActiGraph GT3X+ accelerometer (Actigraph, Pensacola, FL) worn on the non-dominant wrist for seven full and two partial days among a subsample of participants. Further details regarding the specific protocols for collecting NHANES variables can be accessed on the official website at \url{https://www.cdc.gov/nchs/nhanes/index.htm}.

\noindent In our investigation, we adopt a functional perspective to summarize physical activity profiles using distributional quantile representations. These representations are equipped with the $2$ Wasserstein distance ($\mathcal{Y}= \mathcal{W}_{2}([a,b])$), as originally proposed in \cite{matabuena2021distributional}. Our predictor variables include gender, age, waist circumference, glucose levels, and a scalar summary of energetic expenditure, total activity count, and diet health score. We posited that the biochemical variable glucose would exhibit a weak association with physical activity levels, while the summary physical activity variables and diet would display a stronger relationship. Moreover, we anticipated that the remaining variables would demonstrate moderate connections with daily physical activity and energy expenditure.
	
	\subsubsection{Results}

\noindent The global importance of relevant and non-relevant variables for predicting distributional physical activity representations is summarized in Table \ref{table:nhanesacel}. Initially, based on our hypothesis, we expected glucose to be non-relevant, while waist circunference would exhibit a statistical association with the response. However, the results of the new variable selection method surprised us, indicating that waist circunference is not a significant variable. This unexpected outcome can be attributed to the fact that the effect of waist is already captured directly by the total activity count (TAC), as both variables are correlated.

\noindent To visually illustrate the local importance of the analyzed variables, we present Figure \ref{fig:sel}. The figure depicts the intensity of importance for each value of the predictor on the y-axis. Notably, in our application, the importance of the variables increases with higher values. This trend is particularly pronounced for the TAC variable, where larger TAC values correspond to higher variable importance.

\noindent Both Table \ref{table:nhanesacel} and Figure \ref{fig:sel} offer valuable insights into the relevance and importance of the variables in predicting distributional physical activity representations. These findings contribute to understanding the individual characteristics that impact physical activity profiles.

 \begin{table}[ht!]
    \centering
    \begin{tabular}{|c|c|c|c|}
        \hline
        \textbf{Variable No.} & \textbf{Variable Name} & \textbf{Selected} & \textbf{Raw p-value} \\
        \hline
        1 & \textbf{Gender} & TRUE & $0.003$ \\
        2 & \textbf{Age} & TRUE & $<0.001$ \\
        3 & \textbf{Waist} & FALSE & $<0.26$ \\
        4 & \textbf{TAC} & TRUE & $<0.001$ \\
        5 & \textbf{Healthy Eating Index} & TRUE & $<0.001$ \\
        6 & \textbf{Glucose Values} & FALSE & $<0.04$ \\
        \hline
    \end{tabular}
    \caption{Summary of results from applying a new variable selection method to predict distributional physical activity representations using data from the NHANES 2011-2014 dataset. Variable No. indicates the variable number in the Figure \ref{fig:sel}.}
    \label{table:nhanesacel}
\end{table}	
	\section{Discussion}
	
\noindent This paper introduces a novel data analysis framework designed to quantify uncertainty in regression models with random responses taking values in metric spaces. Our approach demonstrates  versatility, applicable to a wide range of regression algorithms, and operates under minimal theoretical conditions. Our method provides non-asymptotic guarantees in homoscedastic cases. Furthermore, this novel uncertainty quantification framework extends the notion of conditional quantile function for metric space responses \cite{liu2022quantiles}.

\noindent In the context of homoscedasticity, our approach represents a natural generalization of split conformal methods by incorporating the distance function $d_2$.  as a conformity score. For the heteroscedastic case, we draw upon recent work by Györfi and Walk on univariate responses \cite{gyofi2020nearest}, extending their methodology to metric spaces. Through extensive simulations and theoretical analysis, we validate the effectiveness of our method and highlight its properties in various settings.

\noindent We acknowledge that in the heteroscedastic case, the selection of model hyperparameters, specifically $d_2$, and $k$, smoothing parameter,
 can significantly impact the quality of final estimator results. To address this issue effectively, we propose exploring stability selection techniques \cite{https://doi.org/10.1111/j.1467-9868.2010.00740.x} and other resampling techniques based on bootstrap for the kNN method, as proposed in the literature \cite{hall2008choice, samworth2012optimal}. Investigating these approaches in future research is anticipated to offer valuable solutions to the tuning parameter problem.

\noindent The effectiveness of our approach is demonstrated through different relevant medical examples. For example, we present a novel application in diabetes, proposing normative distributional glucose values across different age groups using the notion of glucodensity \cite{doi:10.1177/0962280221998064}.  
Currently, the use of CGM devices in diabetes research is of increasing interest \cite{KESHET2023758}, and our applications have contributed to this direction in human health research.

\noindent Moreover,  we have developed a variable selection method for metric space responses. This method is computationally efficienct. Importantly, our approach integrates into other semiparametric models, such as the partial single index Fréchet model \cite{ghosal2023predicting}, distinguishing it from existing variable selection methods in metric spaces that are limited to the global Fréchet regression model \cite{tucker2021variable}.

\noindent Future research directions for our method are geared towards expanding its applicability and advancing its capabilities in handling diverse data scenarios and structures. One crucial aspect is extending the uncertainty quantification framework to effectively handle temporally and spatially correlated data, enabling us to tackle more complex real-world situations. Additional relevant direction could include  the accommodation of non-Euclidean predictors \cite{cohen2022metric}.

	\section*{Acknowledgments} 
	This research was  partially supported by US Horizon 2020 research and innovation
	program under grant agreement No. 952215 (TAILOR Connectivity Fund). Gábor Lugosi acknowledges the support of Ayudas Fundación BBVA a Proyectos de Investigación Cient\'ifica 2021 and the Spanish Ministry of Economy and Competitiveness, Grant PGC2018-101643-B-I00 and FEDER, EU.

	\bibliographystyle{apalike}
	\bibliography{manuscript}

	\appendix

	\newpage

	\section{Theory}

	\subsection{Homoscedastic case}
	
	\subsubsection{Statistical consistency}
	
	\begin{proposition}
    Assume that Assumption \ref{1Smet} is satisfied. Then
    \begin{equation}
        \frac{1}{n_2} \sum_{i \in [S_2]} \left| d_{2}(Y_i, m(X_i)) - d_{2}(Y_i, \widetilde{m}(X_i)) \right| = o_{p}(1).
    \end{equation}

 \noindent     Furthermore, for any  continuity point $v\in \mathbb{R}$ of $G$ where $G(v) = \mathbb{P}(d_{2}(Y, m(X)) \leq v)$, we have:
    \begin{equation}
        |\widetilde{G}^{*}(v) - G(v)| = o_{p}(1),
    \end{equation}
    where $\widetilde{G}^{*}(v) = \frac{1}{n_2} \sum_{i \in [S_2]} \mathbb{I}\{d_{2}(Y_i, \widetilde{m}(X_i)) \leq v\}$.
\end{proposition}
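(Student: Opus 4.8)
The plan is to prove the two claims in sequence, since the second follows from the first together with a standard Glivenko--Cantelli-type argument. For the first claim, the key tool is the reverse triangle inequality for the metric $d_2$: for every $i \in [S_2]$,
\begin{equation*}
\left| d_{2}(Y_i, m(X_i)) - d_{2}(Y_i, \widetilde{m}(X_i)) \right| \leq d_{2}(m(X_i), \widetilde{m}(X_i)).
\end{equation*}
Averaging over $i \in [S_2]$ gives
\begin{equation*}
\frac{1}{n_2} \sum_{i \in [S_2]} \left| d_{2}(Y_i, m(X_i)) - d_{2}(Y_i, \widetilde{m}(X_i)) \right| \leq \frac{1}{n_2} \sum_{i \in [S_2]} d_{2}(m(X_i), \widetilde{m}(X_i)).
\end{equation*}
Now I condition on $\mathcal{D}_{train}$, so $\widetilde{m}$ is a fixed (measurable) function and the points $\{X_i\}_{i \in [S_2]}$ are i.i.d.\ and independent of $\mathcal{D}_{train}$. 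Taking expectations conditional on $\mathcal{D}_{train}$, each term has mean $\mathbb{E}(d_{2}(\widetilde{m}(X), m(X)) \mid \mathcal{D}_{train})$, which by Assumption \ref{1Smet}(2) tends to $0$ in probability as $n_1 \to \infty$. An application of Markov's inequality (conditionally on $\mathcal{D}_{train}$) then shows the average on the right-hand side is $o_p(1)$; since it dominates the nonnegative quantity of interest, that quantity is $o_p(1)$ as well. One should be slightly careful about the joint limit in $n_1, n_2$, but since the conditional-mean bound does not depend on $n_2$, the conclusion is unaffected.

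For the second claim, fix a continuity point $v$ of $G$. Introduce the intermediate empirical process built from the \emph{true} residuals,
\begin{equation*}
\widehat{G}(v) = \frac{1}{n_2} \sum_{i \in [S_2]} \mathbb{I}\{d_{2}(Y_i, m(X_i)) \leq v\},
\end{equation*}
and split $|\widetilde{G}^{*}(v) - G(v)| \leq |\widetilde{G}^{*}(v) - \widehat{G}(v)| + |\widehat{G}(v) - G(v)|$. The second term is $o_p(1)$ by the law of large numbers, since $\widehat{G}(v)$ is an average of i.i.d.\ Bernoulli variables with mean $G(v)$. For the first term, I bound it by a ``near-tie'' count: for any $\eta > 0$,
\begin{equation*}
|\widetilde{G}^{*}(v) - \widehat{G}(v)| \leq \frac{1}{n_2} \sum_{i \in [S_2]} \mathbb{I}\{|d_{2}(Y_i, m(X_i)) - v| \leq \eta\} + \frac{1}{n_2} \sum_{i \in [S_2]} \mathbb{I}\{|d_{2}(Y_i, m(X_i)) - d_{2}(Y_i, \widetilde{m}(X_i))| > \eta\},
\end{equation*}
because if an index contributes to $|\widetilde{G}^{*}(v) - \widehat{G}(v)|$ it either has its true residual within $\eta$ of $v$, or its perturbation exceeds $\eta$. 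The second sum is at most $\eta^{-1}$ times the average from the first claim, hence $o_p(1)$ for fixed $\eta$ by Markov. The first sum converges in probability to $\mathbb{P}(|d_2(Y, m(X)) - v| \leq \eta) = G(v+\eta) - G((v-\eta)^{-})$, which, by continuity of $G$ at $v$, can be made arbitrarily small by choosing $\eta$ small. Combining, for every $\epsilon > 0$ one first picks $\eta$ small enough to control the first sum's limit, then lets $n \to \infty$; this yields $|\widetilde{G}^{*}(v) - \widehat{G}(v)| = o_p(1)$, and hence $|\widetilde{G}^{*}(v) - G(v)| = o_p(1)$.

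The main obstacle is the first term of the second claim: $\widetilde{G}^{*}$ counts indicators of perturbed residuals, and a small perturbation near the threshold $v$ can flip an indicator, so one cannot directly bound $|\widetilde{G}^{*}(v) - \widehat{G}(v)|$ by the $L^1$ distance between residuals. The resolution is the two-parameter ``sandwiching'' above that trades off a near-threshold probability (controlled by continuity of $G$) against a large-deviation event for the perturbation (controlled by the first claim via Markov); getting the order of quantifiers right — choose $\eta$ first, then send $n \to \infty$ — is the delicate point. Everything else is routine: the reverse triangle inequality, Markov's inequality, and the weak law of large numbers.
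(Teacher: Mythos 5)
Your proof is correct and takes essentially the same route as the paper: the reverse triangle inequality plus the law of large numbers for the first claim, and for the second claim a decomposition into a near-threshold count (controlled by continuity of $G$ at $v$) and a large-perturbation count (controlled by the first claim via Markov), with the standard order of limits (choose $\eta$ first, then send $n\to\infty$). The paper phrases the second part with an explicit index set $A_\gamma$ and a sandwich $G(v-\gamma)\leq\cdot\leq G(v+\gamma)$, but that is the same two-parameter argument; your version is if anything slightly cleaner in spelling out why $|A_\gamma|/n_2=o_p(1)$.
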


	\begin{proof}
\noindent		For the first result, observe that
		\begin{equation*}
		\frac{1}{n_2} \sum_{i\in [S_2]} |d_{2}(Y_i, m(X_i))-d_{2}(Y_i,\widetilde{m}(X_i))| \leq \frac{1}{n_2} \sum_{i\in [S_2]} |d_{2}(m(X_i),\widetilde{m}(X_i))|.
		\end{equation*}
\noindent		By Assumption \ref{1Smet} and the law of large numbers, we have
		\begin{equation*}
		\frac{1}{n_2} \sum_{i\in [S_2]} |d_{2}(Y_i, m(X_i))-d_{2}(Y_i, \widetilde{m}(X_i))| = o_{p}(1).
		\end{equation*}
		
\noindent		For the second part, fix $v \in \mathbb{R}$ as a continuity point. Define $\widetilde{G}_{m}^{*}(v)= \frac{1}{n_2} \sum_{i\in [S_2]} \mathbb{I}\{d_{2}(Y_i, m(X_i)) \leq v\}$ and the random quantity $R_{T}(v)= |\widetilde{G}^{*}_{m}(v)-G(v)|$, satisfying, by the law of large numbers, $R_{T}(v)= o_{p}(1)$.
		
\noindent		For any fixed \(\gamma > 0\), define the set \(A_{\gamma} = \{i \in [S_2] : |d_{2}(Y_i, \widetilde{m}(X_i)) - d_{2}(Y_i, m(X_i))| \geq \gamma \}\). Then,
		\begin{align*}
		& n_2\left[\widetilde{G}_{m}^{*}(v)-\widetilde{G}^{*}(v)\right] \\
		& \leq  \left| \sum_{i\in A_{\gamma}}  (\mathbb{I}\{d_{2}(Y_i, \widetilde{m}(X_i))\leq v\}-\mathbb{I}\{d_{2}(Y_i,m(X_i))\leq v\})\right| + \left| \sum_{i\in A_{\gamma}^{c}} (\mathbb{I}\{d_{2}(Y_i, \widetilde{m}(X_i)\leq v))\}-\mathbb{I}\{d_{2}(Y_i, m(X_i))\leq v\})\right| \\
		& \leq |A_{\gamma}| + \left|\sum_{i\in A_{\gamma}^{c}} (\mathbb{I}\{d_{2}(Y_i, \widetilde{m}(X_i))\leq v\}-\mathbb{I}\{d_{2}(Y_i, m(X_i))\leq v\})\right|.
		\end{align*}
\noindent  For $i\in A_{\gamma}^{c}$, $d_{2}(Y_i, m(X_i))-\gamma \leq d_{2}(Y_i, \widetilde{m}(X_i)) \leq d_{2}(Y_i, m(X_i))+\gamma$. Therefore,
		\begin{equation*}
		\sum_{i\in A_{\gamma}^{c}} \mathbb{I}\{d_{2}(Y_i, m(X_i))\leq v-\gamma\} \leq \sum_{i\in A_{\gamma}^{c}} \mathbb{I}\{d_{2}(Y_i, \widetilde{m}(X_i))\leq v\} \leq \sum_{i\in A_{\gamma}^{c}} \mathbb{I}\{d_{2}(Y_i, m(X_i))\leq v+\gamma\}.
		\end{equation*}
	\noindent	For any continuity point $v \in \mathbb{R}^{+}$ and any $\epsilon_{v} > 0$, there exists $\delta_{v} > 0$ such that for any $z \in (v - \delta_{v}, v + \delta_{v})$, $|G(z) - G(v)| < \epsilon_{v}$. Then, 
	
\begin{align*}
& n_2 \left| \widetilde{G}^{*}(v) - \widetilde{G}^{*}_{m}(v) \right| \\
&\leq  |A_{\gamma}|+ \left| \sum_{i\in A_{\gamma}^{c}} \mathbb{I}\{d_{2}(Y_i, \widetilde{m}(X_i)) \leq v \} - \sum_{i\in A_{\gamma}^{c}} \mathbb{I}\{d_{2}(Y_i, m(X_i)) \leq v \} \right| \\
&\leq  |A_{\gamma}|+  \left| n_2 \left[ \widetilde{G}_{m}^{*}(v + \gamma) - \widetilde{G}_{m}^{*}(v - \gamma)\right] - \left( \sum_{i\in A_{\gamma}} \mathbb{I}\{d_{2}(Y_i, m(X_i)) \leq v + \gamma \} - \sum_{i\in A_{\delta}} \mathbb{I}\{d_{2}(Y_i, m(X_i)) \leq v - \gamma\} \right) \right| \\
& \leq |A_{\gamma}|+ n_2 \left( (G(v + \gamma)) - G(v - \gamma) + R_{T}(v+\gamma)+ R_{T}(v-\gamma) \right) + |A_{\gamma}| \quad \text{(triangle inequality)} \\
& \leq  n_2 (2\epsilon_{v} + R_{T}(v+\gamma)+ R_{T}(v-\gamma)) + 2|A_{\gamma}|.
\end{align*}

\noindent First letting $n_{2}\to \infty$ and then $\gamma \to 0,$ we obtain

\begin{align*}
    |\widetilde{G}^{*}(v) - G(v)| \leq |\widetilde{G}^{*}(v) - \widetilde{G}_{m}(v)| + R_{T}(v) \leq o_{p}(1).
\end{align*}

	\end{proof}

\begin{corollary}\label{theo:quantil}
Assume that Assumption  \ref{1Smet} is satisfied.	For a fixed $\alpha \in (0,1)$, let $q_{1-\alpha} := \inf\{t \in \mathbb{R} : G(t) \geq 1-\alpha\}$, and suppose that $q_{1-\alpha}$ exists uniquely and is a continuity point of $G(\cdot)$. For any $\epsilon>0,$ as $n_1 \to \infty$, and,  $n_2 \to \infty$, we have
	\[
	\lim_{n_{2}\to \infty} \mathbb{P}\left(\left| \widetilde{q}^{m}_{1-\alpha} - q_{1-\alpha}\right|\geq \epsilon\right) = 1,
	\]
	and

	\[
	\lim_{n_{2}\to \infty} \mathbb{P}\left(\left| \widetilde{q}_{1-\alpha}  -  q_{1-\alpha}\right|\geq \epsilon\right) = 1,
	\]

 \noindent where  $\widetilde{q}^{m}_{1-\alpha}$ is  the empirical quantile from the empirical distribution
\[
\widetilde{G}_{m}(t) = \frac{1}{n_2} \sum_{i \in [S_2]} \mathbb{I}\{d_{2}(Y_i, m(X_i)) \leq t\},
\]

\noindent and $\widetilde{q}_{1-\alpha}$  is  the empirical quantile from the empirical distribution

\[
\widetilde{G}^{*}(t) = \frac{1}{n_2} \sum_{i \in [S_2]} \mathbb{I}\{d_{2}(Y_i, \widetilde{m}(X_i)) \leq t\}
.\]

\end{corollary}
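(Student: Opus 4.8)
The displayed identities as printed cannot hold — they would assert that the empirical quantiles stay bounded away from $q_{1-\alpha}$ with probability approaching one, which contradicts the purpose of the corollary — so I read them as containing a sign typo: the intended conclusion is the \emph{consistency} of the empirical quantiles, namely $\mathbb{P}(|\widetilde{q}^{m}_{1-\alpha}-q_{1-\alpha}|\geq\epsilon)\to 0$ and $\mathbb{P}(|\widetilde{q}_{1-\alpha}-q_{1-\alpha}|\geq\epsilon)\to 0$ as $n_2\to\infty$ (equivalently, the probabilities of the complementary events tend to $1$). The plan is to derive this from the preceding Proposition by the classical device of inverting pointwise convergence of distribution functions into convergence of quantiles, carried out at continuity points.

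First I would note that the monotone function $G$ has at most countably many discontinuities, hence its continuity points are dense in $\mathbb{R}$. Fixing $\epsilon>0$, I would use this density, together with the hypothesis that $q_{1-\alpha}$ is the \emph{unique} solution of $G(t)=1-\alpha$ and a continuity point of $G$, to pick continuity points $v_-\in(q_{1-\alpha}-\epsilon,q_{1-\alpha})$ and $v_+\in(q_{1-\alpha},q_{1-\alpha}+\epsilon)$. From $q_{1-\alpha}=\inf\{t:G(t)\geq 1-\alpha\}$ we get $G(v_-)<1-\alpha$, and monotonicity together with uniqueness forces $G(v_+)>1-\alpha$ (were $G(v_+)=1-\alpha$, then $v_+$ would be a second solution of $G(t)=1-\alpha$). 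Set $\eta:=\min(1-\alpha-G(v_-),\,G(v_+)-(1-\alpha))>0$.

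Next I would invoke the preceding Proposition, which yields $|\widetilde{G}^{*}(v)-G(v)|=o_{p}(1)$ at every continuity point $v$ of $G$; the analogous statement $|\widetilde{G}_m(v)-G(v)|=o_{p}(1)$ follows from the law of large numbers applied to the i.i.d.\ variables $\{d_{2}(Y_i,m(X_i))\}_{i\in[S_2]}$ (and is in fact already contained in the proof of that Proposition, via the quantity $R_T(v)$). Consequently the event $E_{n_2}:=\{|\widetilde{G}^{*}(v_-)-G(v_-)|<\eta\}\cap\{|\widetilde{G}^{*}(v_+)-G(v_+)|<\eta\}$ has probability tending to $1$. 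On $E_{n_2}$ one has $\widetilde{G}^{*}(v_-)<1-\alpha\le\widetilde{G}^{*}(v_+)$, so the empirical quantile $\widetilde{q}_{1-\alpha}=\inf\{t:\widetilde{G}^{*}(t)\geq 1-\alpha\}$ lies in $(v_-,v_+]$, whence $|\widetilde{q}_{1-\alpha}-q_{1-\alpha}|<\epsilon$. This gives $\mathbb{P}(|\widetilde{q}_{1-\alpha}-q_{1-\alpha}|\geq\epsilon)\to 0$, and the identical argument with $\widetilde{G}_m$ in place of $\widetilde{G}^{*}$ handles $\widetilde{q}^{m}_{1-\alpha}$.

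There is no genuine analytic difficulty here: the substantive content — that $\widetilde{G}^{*}$ is close to $G$ pointwise even though it is built from the estimated center $\widetilde{m}$ rather than $m$ — has already been established in the preceding Proposition, so this corollary is essentially a deterministic bookkeeping consequence of it. The only points requiring a little care are ensuring that continuity points can be chosen strictly on both sides of $q_{1-\alpha}$ inside the $\epsilon$-window (density of continuity points together with the assumption that $q_{1-\alpha}$ is a continuity point) and correctly handling the ``$\geq$ versus $>$'' in the definition of the empirical quantile near $v_+$, which is precisely why one works at a continuity point $v_+>q_{1-\alpha}$ rather than evaluating at $q_{1-\alpha}$ itself.
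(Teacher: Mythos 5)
Your proof is correct, and you were right to read the displayed limits as a typo for quantile consistency (probability of the $\epsilon$-deviation tending to $0$, equivalently the complementary event tending to $1$). Your route is in the same spirit as the paper's, which simply asserts that both claims follow from the law of large numbers because $\widetilde{G}^{*}(q_{1-\alpha})=G(q_{1-\alpha})+o_{p}(1)$ and $\widetilde{G}^{*}_{m}(q_{1-\alpha})=G(q_{1-\alpha})+o_{p}(1)$; but your version is actually the more complete one, since convergence of the (estimated) distribution functions at the single point $q_{1-\alpha}$ does not by itself pin down the quantile --- the standard two-sided sandwich you carry out, choosing continuity points $v_{-}<q_{1-\alpha}<v_{+}$ inside the $\epsilon$-window, using uniqueness plus continuity to get $G(v_{-})<1-\alpha<G(v_{+})$, and then invoking the preceding proposition (and the LLN for $\widetilde{G}_{m}$) at $v_{\pm}$, is exactly the missing bookkeeping that turns pointwise CDF consistency into consistency of $\widetilde{q}_{1-\alpha}$ and $\widetilde{q}^{m}_{1-\alpha}$. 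In short: same key ingredient as the paper (the pointwise convergence established in the preceding proposition), with the quantile-inversion step made explicit rather than left implicit.
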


\begin{proof}
	Both results are consequences of the  law of large numbers, since $\widetilde{G}^{*}(q_{1-\alpha}) = G(q_{1-\alpha}) + o_{p}(1)$ and $\widetilde{G}_{m}^{*}(q_{1-\alpha}) = G(q_{1-\alpha}) + o_{p}(1)$.
\end{proof}

	\subsection{Proof of Theorem \ref{th:conshom}}


 \noindent The goal of this section is to show:

\[
\int_{\mathcal{X}} \mathbb{P}(Y \in C^{\alpha}(x) \triangle \widetilde{C}^{\alpha}(x) \mid X=x, \mathcal{D}_{n}) P_{X}(dx) = o_{p}(1),
\]

\noindent where $C^{\alpha}(x)$ and $\widetilde{C}^{\alpha}(x)$ are defined in Section \ref{sec:homo}.

	\begin{proof}
		
\noindent For a fixed \(\alpha \in (0,1)\), let \(q_{1-\alpha}\) denote the corresponding quantile of $d_{2}(Y,m(X))$, and suppose that \(q_{1-\alpha}\) is a continuity point for the function \(G(\cdot)\). Define the sets \(\widetilde{C}_{m}^{\alpha}(x) = \mathcal{B}(m(x), \widetilde{q}_{1-\alpha})\) and \(\widetilde{C}_{q}^{\alpha}(x) = \mathcal{B}(\widetilde{m}(x), q_{1-\alpha})\), considering a fixed \(x \in \mathcal{X}\). Define also the function $G^{*}(t,x)= \mathbb{P}(d_{2}(Y, \widetilde{m}(X))\leq t \mid  X=x)$.
Given the properties of the metric induced by the symmetric difference of two sets, it holds that
		
	\begin{align*}
    & \mathbb{P}(Y \in C^{\alpha}(x) \triangle \widetilde{C}^{\alpha}(x) \mid X=x, \mathcal{D}_{n}) \\
    & \leq  \mathbb{P}(Y \in C^{\alpha}(x) \triangle \widetilde{C}_m^{\alpha}(x) \mid X=x, \mathcal{D}_{n}) + \mathbb{P}(Y \in \widetilde{C}_m^{\alpha}(x) \triangle \widetilde{C}^{\alpha}(x) \mid X=x, \mathcal{D}_{n}) \\
    & +  \mathbb{P}(Y \in \widetilde{C}^{\alpha}(x) \triangle \widetilde{C}_q^{\alpha}(x) \mid X=x, \mathcal{D}_{n}) + \mathbb{P}(Y \in \widetilde{C}_q^{\alpha}(x) \triangle \widetilde{C}^{\alpha}(x) \mid X=x, \mathcal{D}_{n}).
\end{align*}
		
	\noindent	We will analyze the four terms separately.

  \medskip
  \noindent
		\textbf{Case 1:}
		
\begin{eqnarray*}
\lefteqn{
    \mathbb{P}(Y \in C^{\alpha}(x) \triangle \widetilde{C}^{\alpha}_{m}(X) \mid X=x, \mathcal{D}_{n})   } \\
    & = & \mathbb{P}(\{Y: d_{2}(Y,m(X)) > q_{1-\alpha}, d_{2}(Y,m(X)) \leq \widetilde{q}_{1-\alpha} \} \mid X=x, \mathcal{D}_{n}) \\
    & & + \mathbb{P}(\{Y: d_{2}(Y,m(X)) \leq q_{1-\alpha}, d_{2}(Y,m(X)) > \widetilde{q}_{1-\alpha} \} \mid X=x, \mathcal{D}_{n}) \\
    & = & 2|G(\widetilde{q}_{1-\alpha})- G(q_{1-\alpha})|.
\end{eqnarray*}

\noindent Integrating, we have
\begin{align*}
    \int_{\mathcal{X}} & \mathbb{P}(Y \in C^{\alpha}(x) \triangle \widetilde{C}_m^{\alpha}(x) \mid X=x, \mathcal{D}_{n}) P_{X}(dx) \\
    & = 2|G(\widetilde{q}_{1-\alpha})- G(q_{1-\alpha})| \int_{\mathcal{X}} P_{X}(dx) = o_{p}(1),
\end{align*}
\noindent and in combination with Corollary \ref{theo:quantil}.

  \medskip
  \noindent
		\textbf{Case 2:}

\begin{align*}
& \hspace{0.1cm} \mathbb{P}(Y \in \widetilde{C}_m^{\alpha}(x) \triangle \widetilde{C}^{\alpha}(x) \mid X=x, \mathcal{D}_{n}) \\
& = \mathbb{P}(\{Y: d(Y,m(X)) > \widetilde{q}_{1-\alpha}, d_{2}(Y,\widetilde{m}(X)) \leq \widetilde{q}_{1-\alpha} \} \mid X=x, \mathcal{D}_{n}) \\
&  + \mathbb{P}(\{Y: d_{2}(Y,m(X)) \leq \widetilde{q}_{1-\alpha}, d_{2}(Y,\widetilde{m}(X)) > \widetilde{q}_{1-\alpha} \} \mid X=x, \mathcal{D}_{n}) \\
& \leq \mathbb{P}(\{Y: d_{2}(Y,\widetilde{m}(X)) \leq \widetilde{q}_{1-\alpha} < d_{2}(\widetilde{m}(X),m(X)) + d_{2}(y,\widetilde{m}(X)) \} \mid X=x, \mathcal{D}_{n}) \\
& + \mathbb{P}(\{Y: d_{2}(Y,m(X)) \leq \widetilde{q}_{1-\alpha} < d_{2}(\widetilde{m}(X),m(X)) + d_{2}(Y,m(X)) \} \mid X=x, \mathcal{D}_{n}) \\
& \leq G^{*}(\widetilde{q}_{1-\alpha} + d_{2}(\widetilde{m}(x),m(x)),x) - G^{*}(\widetilde{q}_{1-\alpha},x) + G(\widetilde{q}_{1-\alpha} + d_{2}(\widetilde{m}(x),m(x)) - G(\widetilde{q}_{1-\alpha}).
\end{align*}

\noindent Now, using the continuity hypothesis for $q_{1-\alpha}$, $\forall \epsilon>0,$ as $n_1 \to \infty$, and,  $n_2 \to \infty$, we have

\[
\lim_{n_{2}\to \infty} \mathbb{P}\left(\left| \widetilde{q}_{1-\alpha}  -  q_{1-\alpha}\right|\geq \epsilon\right) = 1,
\]

\noindent and that $\mathbb{E}(d_{2}(\widetilde{m}(X),m(X))\mid \mathcal{D}_{\text{train}})\to 0$ as $|\mathcal{D}_{\text{train}}|\to \infty$, we infer that $\int_{\mathcal{X}} G^{*}(\widetilde{q}_{1-\alpha} + d_{2}(\widetilde{m}(x),m(x)),x) - G^{*}(\widetilde{q}_{1-\alpha},x) P_{X}(dx) = o_{p}(1)$. Repeating for the other term, we have

\begin{align*}
& \int_{\mathcal{X}} \mathbb{P}(Y \in \widetilde{C}_m^{\alpha}(x) \triangle \widetilde{C}^{\alpha}(x) \mid X=x, \mathcal{D}_{n}) P_{X}(dx) \\
& =  \int_{\mathcal{X}} \left[ G^{*}(\widetilde{q}_{1-\alpha} + d_{2}(\widetilde{m}(x),m(x)),x) - G^{*}(\widetilde{q}_{1-\alpha},x) + G(\widetilde{q}_{1-\alpha} + d_{2}(\widetilde{m}(x),m(x)) - G(\widetilde{q}_{1-\alpha}) \right] P_{X} (dx)= o_{p}(1).
\end{align*}

\medskip
  \noindent		
		\textbf{Case 3:}
		
		\begin{align*}
		& \hspace{0.09cm} \mathbb{P}(Y \in \widetilde{C}^{\alpha}(x) \triangle \widetilde{C}_q^{\alpha}(x) \mid X=x, \mathcal{D}_{n}) \\
		& = \mathbb{P}(\{Y: d_{2}(Y,m(X)) > q_{1-\alpha}, d_{2}(Y,\widetilde{m}(X)) \leq q_{1-\alpha} \} \mid X=x, \mathcal{D}_{n}) 
		\\& + \hspace{0.085cm} \mathbb{P}(\{ Y: d_{2}(Y,m(X)) \leq q_{1-\alpha}, d(Y,\widetilde{m}(X)) > q_{1-\alpha}\} \mid X=x, \mathcal{D}_{n})
		\end{align*}
		
\noindent		This is similar to Case 2, except that we exchange the role of $\widetilde{q}_{1-\alpha}$ with $q_{1-\alpha}$. Therefore, we have
		
		\begin{align*}
		& \mathbb{P}(Y \in C^{\alpha}(x) \triangle \widetilde{C}_q^{\alpha}(x) \mid X=x, \mathcal{D}_{n}) \\
		& \leq G^{*}(q_{1-\alpha} + d_{2}(\widetilde{m}(x),m(x),x) - G^{*}(q_{1-\alpha},x) + G(q_{1-\alpha} + d_{2}(\widetilde{m}(x),m(x)),x) - G(q_{1-\alpha})
		\end{align*}
		
	\noindent	As a consequence,
		
		\begin{align*}
		& \int_{\mathcal{X}} \left[ G^{*}(q_{1-\alpha} + d_{2}(\widetilde{m}(x),m(x),x) - G^{*}(q_{1-\alpha},x) + G(q_{1-\alpha} + d_{2}(\widetilde{m}(x),m(x)) - G(q_{1-\alpha})) \right]  P_{X} (dx)=o_p(1).
		\end{align*}
		
	\medskip
  \noindent	
	\textbf{Case 4:}
	
\noindent	Following the arguments of Case $1$, we have that 
	
	\begin{align*}
	& \mathbb{P}(Y \in \widetilde{C}_q^{\alpha}(x) \triangle \widetilde{C}^{\alpha}(x) \mid X=x, \mathcal{D}_{n}) \\
	& = \mathbb{P}(\{ d_{2}(Y,\widetilde{m}(x)) > q_{1-\alpha}, d_{2}(Y, \widetilde{m}(x)) \leq \widetilde{q}_{1-\alpha} \} \mid X=x, \mathcal{D}_{n}) \\
	& + \hspace{0.075cm} \mathbb{P}(\{Y: d_{2}(Y,\widetilde{m}(X)) \leq q_{1-\alpha}, d_{2}(Y,\widetilde{m}(X)) > \widetilde{q}_{1-\alpha} \} \mid X=x, \mathcal{D}_{n}),
	\end{align*}
	
\noindent	and,
	
	\begin{align*}
	& \mathbb{P}(Y \in \widetilde{C}_q^{\alpha}(x) \triangle \widetilde{C}^{\alpha}(x) \mid X=x, \mathcal{D}_{n}) \leq 2 \mid G^{*}(\widetilde{q}_{1-\alpha},x)- G^{*}(q_{1-\alpha},x)| = o_p(1),
	\end{align*}
	
	\noindent and as a consequence
	
	\begin{align*}
	\int_{\mathcal{X}} & \mathbb{P}(Y \in \widetilde{C}_q^{\alpha}(x) \triangle \widetilde{C}^{\alpha}(x) \mid X=x, \mathcal{D}_{n}) P_{X} (dx) = o_{p}(1).
	\end{align*}
	
\noindent	Finally, combining the prior four results, we infer that
	
	\begin{align*}
	\int_{\mathcal{X}} & \mathbb{P}(Y \in C^{\alpha}(x) \triangle \widetilde{C}^{\alpha}(x)\mid X=x, \mathcal{D}_{n}) P_{X}(dx) = o_{p}(1).
	\end{align*}

	\end{proof}

		\subsection{Proof of Propositon \ref{the:minimax1}}

\begin{proof}
  \noindent  We begin by observing that, for the proof Proposition $4$, we can decompose the probability of interest as follows:
    
    \begin{align*}
        & \mathbb{P}(Y \in C^{\alpha}(x) \triangle \widetilde{C}^{\alpha}(x) \mid X=x, \mathcal{D}_{n})   \\ & \leq \mathbb{P}(Y \in C^{\alpha}(x) \triangle \widetilde{C}_m^{\alpha}(x) \mid X=x, \mathcal{D}_{n})+ \mathbb{P}(Y \in \widetilde{C}_m^{\alpha}(x) \triangle \widetilde{C}^{\alpha}(x) \mid  X=x, \mathcal{D}_{n}) \quad \\ & + \mathbb{P}(Y \in \widetilde{C}^{\alpha}(x) \triangle \widetilde{C}_q^{\alpha}(x) \mid X=x, \mathcal{D}_{n}) + \mathbb{P}(Y \in \widetilde{C}_q^{\alpha}(x) \triangle \widetilde{C}^{\alpha}(x) \mid X=x, \mathcal{D}_{n}).
    \end{align*}

\noindent    Now, for a fixed $x\in \mathcal{X},$ we decompose the four terms explicitly:

    \begin{enumerate}
        \item $\mathbb{P}(Y \in C^{\alpha}(x) \triangle \widetilde{C}_m^{\alpha}(x) \mid X=x, \mathcal{D}_{n})= 2|G(\widetilde{q}^{m}_{1-\alpha})- G(q_{1-\alpha})|$
        \item $\mathbb{P}(Y \in \widetilde{C}_m^{\alpha}(x) \triangle \widetilde{C}^{\alpha}(x)\mid  X=x, \mathcal{D}_{n}) \\
        \leq G^{*}(\widetilde{q}_{1-\alpha} + d_{2}(\widetilde{m}(x),m(x)),x) - G^{*}(\widetilde{q}_{1-\alpha},x) + G(\widetilde{q}_{1-\alpha} + d_{2}(\widetilde{m}(x),m(x))) - G(\widetilde{q}_{1-\alpha})$
        \item $\mathbb{P}(Y \in C^{\alpha}(x) \triangle \widetilde{C}_q^{\alpha}(x) | X=x, \mathcal{D}_{n}) \\
        \leq G^{*}(q_{1-\alpha} + d_{2}(\widetilde{m}(x),m(x)),x) - G^{*}(q_{1-\alpha},x) + G(q_{1-\alpha} + d_{2}(\widetilde{m}(x),m(x))) - G(q_{1-\alpha})$
        \item $\mathbb{P}(Y \in \widetilde{C}_q^{\alpha}(x) \triangle \widetilde{C}^{\alpha}(x) \mid X=x, \mathcal{D}_{n}) = 2 |G^{*}(\widetilde{q}_{1-\alpha},x) - G^{*}(q_{1-\alpha},x)|$
    \end{enumerate}

\noindent    First of all, we note that for the second and third terms, we use the fact that $G$ and $G^{*}$  are Lipschitz functions, and that we obtain an upper bound of the form $C d_{2}(\widetilde{m}(x),m(x))$.

\noindent    For the first and fourth terms, we can provide an upper bound in terms of the distance between the population quantile and the empirical quantile. More specifically, we have 
    
    \[
    \mathbb{P}(Y \in C^{\alpha}(x) \triangle \widetilde{C}_m^{\alpha}(x) \mid X=x, \mathcal{D}_{n}) + \mathbb{P}(Y \in \widetilde{C}_q^{\alpha}(x) \triangle \widetilde{C}^{\alpha}(x) \mid X=x, \mathcal{D}_{n}) \leq 4C |\widetilde{q}_{1-\alpha} - q_{1-\alpha}|.
    \]

  \noindent  Now,  integrating $\int_{\mathcal{X}} \mathbb{P}(Y \in C^{\alpha}(x) \triangle \widetilde{C}^{\alpha}(x) \mid X=x, \mathcal{D}_{n})P_{X}(dx)$, we obtain the desired results.
\end{proof}
					
	\subsection{Heteroscedastic case}
	
	\noindent To extend consisting results to the heteroscedastic scenario, we employ analogous arguments as in the homoscedastic case. The primary challenge arises from our use of a kNN algorithm to locally estimate the conditional distribution of distances, $G(v,x)= \mathbb{P}(Y\in d_{2}(Y,m(X))\leq v \mid  X=x)$, as applied in \cite{gyofi2020nearest}. For a given $x \in \mathbb{R}^{p}$ and $v \in \mathbb{R}$ as a continuity point, establishing convergence in probability crucially relies on the expression:

\begin{equation}
    R_{T}(v,x) = |\widetilde{G}^{*}_{m}(v,x) - G(v,x)| = o_{p}(1), 
\end{equation}

\noindent where $\widetilde{G}_{m}^{*}(v,x)= \frac{1}{k} \sum_{i\in N_{k}(x)} \mathbb{I}\{d_{2}(Y_i, m(X_i)) \leq v\}$.

\noindent To address these challenges, we first establish some preliminary results.

\begin{theorem}[Strong consistency of kNN \cite{10.1214/aos/1176345647}]\label{thm:aux}
 \noindent   Let $(X,Y)$, $(X_{1},Y_{1}),\dots,(X_{n},Y_{n})$ be independent identically distributed $\mathbb{R}^{p}\times \mathbb{R}$-valued random vectors with $\mathbb{E}(|Y|)<\infty$. The regression function $m(x)= \mathbb{E}(Y|X=x)$, for $x\in \mathbb{R}^{p}$, is estimated by
    \begin{equation}
        \widetilde{m}(x)=   \frac{1}{k} \sum_{i\in N_{k}(x)} Y_{i}, \hspace{0.2cm} k\in \mathbb{N}.
    \end{equation}

   \noindent Assume also that
    \begin{equation*}
        |Y|\leq \gamma <\infty
    \end{equation*}
 \noindent   and $k= k_{n}$ is a sequence of integers such that $\frac{k}{n}\to 0$ and $k\to \infty$ as $n\to \infty$.

 \noindent   Then, for the nearest neighbor estimate, $\mathbb{E}\{|\widetilde{m}_{n}(x)-m(x)|\}\to 0$ as $n\to \infty$ for almost all $x\in \mathcal{X}$, and $\mathbb{E}\{|\widetilde{m}_{n}(X)-m(X)|\}\to 0$ as $n\to \infty$. If, in addition, $\frac{k}{\log n}\to \infty$ as $k, n\to \infty$, then
    
    \begin{equation*}
     \noindent   \mathbb{E}\{|\widetilde{m}_{n}(X)-m(X)||X_{1},Y_{1},\dots,X_{n},Y_{n}\}\to 0 \text{ a.s.}
    \end{equation*}
\end{theorem}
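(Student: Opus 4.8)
The statement is the classical strong-consistency theorem for $k$-nearest-neighbour regression, so the plan is to assemble it from two pieces: Stone's theorem for the convergence-in-expectation parts, and an exponential concentration argument for the conditional (almost-sure) part. Throughout I would write $W_{ni}(x)=\tfrac1k\,\mathbb I\{X_i\in N_k(x)\}$, so that $\widetilde m_n(x)=\sum_{i=1}^n W_{ni}(x)Y_i$ is a Stone-type local average, and decompose $\widetilde m_n(x)-m(x)$ into the estimation term $\sum_i W_{ni}(x)(Y_i-m(X_i))$ and the approximation term $\sum_i W_{ni}(x)(m(X_i)-m(x))$.

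\medskip
\noindent\textbf{Convergence in expectation.} For the estimation term, conditioning on $X_1,\dots,X_n$ fixes the neighbour index set, and the residuals $Y_i-m(X_i)$, $i\in N_k(x)$, are conditionally independent with mean zero and, since $|Y|\le\gamma$, variance at most $4\gamma^2$; hence its conditional second moment is at most $4\gamma^2/k\to 0$, uniformly in $x$. For the approximation term I would approximate $m$ in $L^1(\mu)$ by a bounded uniformly continuous $\bar m$, use that the $k$-th nearest-neighbour distance of $X$ tends to $0$ in probability (this is where $k/n\to 0$ enters) together with uniform continuity of $\bar m$, and bound the leftover $|m-\bar m|$ contribution by the $k$-NN form of Stone's lemma: a constant $c_p$ depending only on $p$ with $\mathbb E\sum_i W_{ni}(X)\,g(X_i)\le c_p\,\mathbb E\,g(X)$ for all $g\ge 0$. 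Equivalently, the cleanest route is to verify the three hypotheses of Stone's theorem for the weights $W_{ni}$ --- hypothesis (i) is exactly this cone-covering lemma, (ii) follows from the vanishing $k$-NN radius, and (iii) is $\max_i W_{ni}\equiv 1/k\to 0$ --- which yields $\mathbb E|\widetilde m_n(X)-m(X)|\to 0$, and, applied at $\mu$-almost every $x$, the pointwise statement $\mathbb E|\widetilde m_n(x)-m(x)|\to 0$.

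\medskip
\noindent\textbf{Almost-sure convergence of the conditional risk.} Next I would set $J_n:=\mathbb E\{|\widetilde m_n(X)-m(X)|\mid D_n\}=\int|\widetilde m_n(x;D_n)-m(x)|\,\mu(dx)$, a $[0,2\gamma]$-valued functional of $D_n=((X_1,Y_1),\dots,(X_n,Y_n))$ with $\mathbb E J_n\to 0$ from the previous step; by Borel--Cantelli it suffices to prove $\sum_n\mathbb P(|J_n-\mathbb E J_n|>\varepsilon)<\infty$ for every $\varepsilon>0$. Splitting $J_n$ along the decomposition above, the estimation part is, conditionally on $X_1,\dots,X_n$, a function of $Y_1,\dots,Y_n$ only: replacing one $Y_j$ changes it by at most $\tfrac{2\gamma}{k}\,\mu\{x:X_j\in N_k(x)\}$, and since $\sum_j\mu\{x:X_j\in N_k(x)\}=k$ (every query point has exactly $k$ neighbours) the squared bounded-difference coefficients sum to $O(\gamma^2/k)$, so McDiarmid's inequality gives an $\exp(-c\varepsilon^2 k)$ tail, summable precisely when $k/\log n\to\infty$. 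For the approximation part one argues similarly, but now perturbing an $X_j$ is controlled through the cone-covering influence lemma (each sample point is a $k$-nearest neighbour of at most $c_p k$ of the remaining points), again yielding an $\exp(-c\varepsilon^2 k)$ tail; Borel--Cantelli then closes the argument.

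\medskip
\noindent\textbf{Main obstacle.} The hard part will be the concentration of the approximation term: unlike the estimation term, where only the $Y$'s move and the bounded differences are harmlessly small, perturbing a single $X_j$ can reshuffle many neighbour sets at once, and the naive bounded-differences estimate degrades when $\mu$ is strongly concentrated. Controlling this cleanly --- combining the cone/influence lemma with the in-expectation bound of the first step --- is the technical heart of the cited result, and it is exactly what forces the extra hypothesis $k/\log n\to\infty$, so that the resulting $\exp(-c\varepsilon^2 k)$ tails become summable in $n$.
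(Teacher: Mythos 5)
The paper does not prove this theorem; it is stated with a citation to Devroye (1981) and used as a black box in the proof of Lemma~\ref{lemma:new2o}, so there is no internal proof to compare your sketch against. Your outline correctly captures the structure of the classical argument: Stone's theorem for the $L^{1}(\mu)$ convergence, then a bounded-difference inequality plus Borel--Cantelli for the almost-sure conditional risk, with the $\exp(-c\varepsilon^{2}k)$ tail explaining why the extra hypothesis $k/\log n\to\infty$ enters. The McDiarmid computation for the estimation term --- using $\sum_{j}\mu\{x:X_{j}\in N_{k}(x)\}=k$ together with $\mu\{x:X_{j}\in N_{k}(x)\}\leq 1$ to obtain $\sum_{j}c_{j}^{2}\leq 4\gamma^{2}/k$ uniformly in the configuration of the $X_{i}$'s --- is sound and gives a tail that is valid conditionally on the $X$-configuration and hence unconditionally.

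There is, however, a genuine gap in the pointwise claim. Stone's theorem yields $\int\mathbb{E}|\widetilde{m}_{n}(x)-m(x)|\,\mu(dx)\to 0$, and this only gives convergence of $x\mapsto\mathbb{E}|\widetilde{m}_{n}(x)-m(x)|$ to $0$ in $\mu$-measure, hence $\mu$-a.e.\ along a subsequence --- not the full-sequence statement $\mathbb{E}|\widetilde{m}_{n}(x)-m(x)|\to 0$ for $\mu$-a.e.\ $x$ that the theorem asserts. Your phrase ``applied at $\mu$-almost every $x$'' does no work: Stone's theorem is an integrated result, not a pointwise one, and you cannot instantiate it ``at'' a point. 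The pointwise a.e.\ statement is precisely what distinguishes Devroye (1981) from Stone (1977) and requires its own differentiation-type argument: approximate $m$ by a continuous $\bar{m}$ in $L^{1}(\mu)$, handle the continuous part at every $x$ in the support via the vanishing $k$-NN radius, and control the residual $\sum_{i}W_{ni}(x)\,|m-\bar{m}|(X_{i})$ at $\mu$-a.e.\ $x$ through a pointwise maximal-function or Besicovitch-type covering inequality, not merely an integrated Stone bound. Without that step the first assertion of the theorem remains unproved in your sketch.
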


\begin{lemma}\label{lemma:new2o}
    \noindent Suppose that $k\to \infty$ and $\frac{k}{log n}\to \infty$ as $n\to \infty$ and hypothesis of Theorem \ref{theorem:hetero} are satisfied. Then for almost every $x\in \mathbb{R}^{p},$
    \begin{equation*}
        |\widetilde{G}^{*}_{m}(v,x) - G(v,x)| = o_{p}(1)  \text{ as }  n \to \infty,
    \end{equation*}
    \noindent and
    \begin{equation*}
        \mathbb{E}\{|\widetilde{G}^{*}_{m}(v,x) - G(v,x)|X_{1},Y_{1},\dots,X_{n},Y_{n}\} = o_{p}(1)  \text{ as } n \to \infty,
    \end{equation*}
   \noindent where $\widetilde{G}_{m}^{*}(v,x) = \frac{1}{k} \sum_{i \in N_{k}(x)} \mathbb{I}\{d_{2}(Y_i, m(X_i)) \leq v\}$.
\end{lemma}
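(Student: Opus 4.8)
## Proof Proposal for Lemma \ref{lemma:new2o}

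\begin{proof}[Proof plan]
The plan is to recognize $\widetilde{G}^{*}_{m}(v,x)$ as an ordinary $k$-nearest-neighbor regression estimate applied to a bounded response, so that Theorem \ref{thm:aux} applies essentially verbatim. Fix $v\in\mathbb{R}$ and introduce the auxiliary random variables $Z_i:=\mathbb{I}\{d_{2}(Y_i,m(X_i))\leq v\}$ for $i\in[S_2]$, together with $Z:=\mathbb{I}\{d_{2}(Y,m(X))\leq v\}$ for a fresh copy $(X,Y)$. Since $m$ is a fixed measurable function and the pairs $(X_i,Y_i)$ are i.i.d., the pairs $(X_i,Z_i)$ are i.i.d.\ copies of $(X,Z)$, the response is bounded ($|Z_i|\leq 1$), and the associated regression function is exactly $\mathbb{E}[Z\mid X=x]=\mathbb{P}(d_{2}(Y,m(X))\leq v\mid X=x)=G(v,x)$. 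Moreover, by construction $\widetilde{G}^{*}_{m}(v,x)=\frac{1}{k}\sum_{i\in N_{k}(x)}Z_i$ is precisely the $k$NN estimate of $G(v,x)$ built from the $n_2$ pairs $(X_i,Z_i)_{i\in[S_2]}$, with neighbours taken in $\mathbb{R}^{p}$.

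With this identification in place, I would invoke Theorem \ref{thm:aux} with $\gamma=1$. The sample size is $n_2\to\infty$ by Assumption \ref{assu}(1), the bandwidth satisfies $k\to\infty$ and $k/n_2\to0$ by Assumption \ref{assu}(3), and the extra hypothesis $k/\log n\to\infty$ forces $k/\log n_2\to\infty$ because $n_2\leq n$. Theorem \ref{thm:aux} then yields, for almost every $x\in\mathbb{R}^{p}$,
\[
\mathbb{E}\left[ |\widetilde{G}^{*}_{m}(v,x) - G(v,x)| \right] \to 0
\quad\text{and}\quad
\mathbb{E}\left[ |\widetilde{G}^{*}_{m}(v,X) - G(v,X)| \mid X_1,Y_1,\dots,X_n,Y_n \right] \to 0 \ \text{a.s.}
\]
as $n\to\infty$. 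Convergence in $L^1$ gives convergence in probability by Markov's inequality, which is the first assertion; almost-sure convergence of the conditional expectation a fortiori gives convergence in probability, which is the second assertion (the conditioning on the full data being equivalent to conditioning on the test sample, since $\widetilde{G}^{*}_{m}(v,\cdot)$ depends only on $\mathcal{D}_{test}$ and the training sample is independent of it).

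There is no deep obstacle here; the points requiring care are bookkeeping ones. First, one must check that the neighbourhood structure $N_{k}(x)$ used in Algorithm \ref{algor:hete} is the same one (nearest neighbours in $\mathbb{R}^{p}$, with ties broken in a fixed way) to which Theorem \ref{thm:aux} applies, which holds because we work with $\mathcal{X}=\mathbb{R}^{p}$. Second, one must track which sample size drives the asymptotics, namely $n_2=|\mathcal{D}_{test}|$ rather than $n$, and verify that the growth conditions on $k$ translate correctly; this is immediate from $n_2\leq n$ and $n_2\to\infty$. Third, the exceptional $P_X$-null set of bad $x$ is inherited directly from Theorem \ref{thm:aux} and does not interact with the continuity-point hypotheses on $G(\cdot,x)$ in Assumption \ref{assu}, which are only needed subsequently when passing from convergence of the distribution functions to convergence of the associated quantiles.
\end{proof}
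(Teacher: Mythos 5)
Your argument is correct and follows essentially the same route as the paper's own proof: both define an auxiliary bounded response (your $Z_i$, the paper's $Y'=f(X,Y)$ with $f(x,y)=\mathbb{I}\{d_2(y,m(x))\leq v\}$), identify $\widetilde{G}^{*}_{m}(v,\cdot)$ as the $k$NN regression estimate of $G(v,\cdot)$ for that response, and invoke Theorem \ref{thm:aux}. Your write-up is in fact slightly more careful than the paper's in spelling out the bookkeeping ($n_2$ versus $n$, the $\gamma=1$ bound, and the passage from $L^1$ convergence to $o_p(1)$ via Markov), none of which changes the substance.
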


\begin{proof}
    \noindent For a fixed positive $v$ continuity point of $G(v,x)=\mathbb{P}(d_{2}(Y,m(X))\leq v \mid X=x)$, define the function $f:\mathbb{R}^{p}\times \mathbb{R} \to [0,1]$ as $f(x,y) = \mathbb{I}\{d_{2}(y,m(x))\leq v\}$. Consider
    \begin{equation*}
        \widetilde{m}(x) = \frac{1}{k} \sum_{i=1}^{k} f(X_{(i)}(x),Y_{(i)}(x)) = \widetilde{G}_{m}^{*}(v,x).
    \end{equation*}
    \noindent Note that
    \begin{equation}
        |\widetilde{G}^{*}_{m}(v,x) - G(v,x)| = \left|\frac{1}{k} \sum_{i=1}^{k} f(X_{(i)}(x),Y_{(i)}(x)) - \mathbb{E}[f(X,Y)\mid X=x]\right|.
    \end{equation}
    \noindent Now, define the random variable $Y^{\prime} = f(X,Y)$, and the pair $(X, Y^{\prime}) \in \mathbb{R}^{p}\times  \mathbb{R}$ satisfies the conditions of Theorem \ref{thm:aux}, where we obtain:
    \begin{equation}
        |\widetilde{G}^{*}_{m}(v,x) - G(v,x)| =  o_{p}(1) \text{ as }   n \to \infty,  
    \end{equation}
\noindent    and
    \begin{equation*}
        \mathbb{E}\{|\widetilde{G}^{*}_{m}(v,x) - G(v,x)\mid X_{1},Y_{1},\dots,X_{n},Y_{n}\} \to 0 \text{ a.s as } n \to \infty.
    \end{equation*}
\end{proof}

	\subsubsection{Statistical consistency}
	\begin{proposition}
  \noindent   Assume that Assumption \ref{assu} holds. For every fixed $x\in \mathcal{X},$ and $v\in \mathbb{R}$ is a continuity point of $G(v,x) = \mathbb{P}(d_{2}(Y,m(X))\leq v\mid X=x)$, and $k\in \mathbb{N}
$ satisfying the regularity conditions from Theorem \ref{theorem:hetero}, then    
    \begin{equation}
        |\widetilde{G}^{*}(v,x) - G(v,x)| = o_{p}(1),
    \end{equation}
    \noindent where $\tilde{G}^{*}(v,x) = \frac{1}{k} \sum_{i \in N_{k}(x)} \mathbb{I}\{d_{2}(Y_i, \widetilde{m}(X_i)) \leq v\}$.
\end{proposition}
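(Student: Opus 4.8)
The plan is to follow the same route as the homoscedastic consistency proof and as Lemma \ref{lemma:new2o}, replacing global empirical averages by the local $k$NN averages over $N_k(x)$ and replacing the law of large numbers by Lemma \ref{lemma:new2o} itself. Writing $\widetilde G^{*}_{m}(v,x)=\frac1k\sum_{i\in N_k(x)}\mathbb{I}\{d_{2}(Y_i,m(X_i))\le v\}$, the first step is the triangle inequality
\[
|\widetilde G^{*}(v,x)-G(v,x)|\le|\widetilde G^{*}(v,x)-\widetilde G^{*}_{m}(v,x)|+|\widetilde G^{*}_{m}(v,x)-G(v,x)|,
\]
whose second term is $o_p(1)$ by Lemma \ref{lemma:new2o} since $v$ is a continuity point of $G(\cdot,x)$; moreover the same lemma gives $\widetilde G^{*}_{m}(v',x)=G(v',x)+o_p(1)$ at every continuity point $v'$ of $G(\cdot,x)$, which I will use with $v'=v\pm\gamma$.

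For the first term I would exploit that $\mathbb{I}\{d_{2}(Y_i,\widetilde m(X_i))\le v\}$ can differ from $\mathbb{I}\{d_{2}(Y_i,m(X_i))\le v\}$ only when the two distances lie on opposite sides of $v$, while the triangle inequality gives $|d_{2}(Y_i,\widetilde m(X_i))-d_{2}(Y_i,m(X_i))|\le d_{2}(\widetilde m(X_i),m(X_i))=:\delta_i$; hence the indicator difference is at most $\mathbb{I}\{|d_{2}(Y_i,m(X_i))-v|\le\delta_i\}$. Fixing a small $\gamma>0$ with $v\pm\gamma$ continuity points of $G(\cdot,x)$ (excluding at most countably many values of $\gamma$) and splitting the neighbours according to whether $\delta_i\ge\gamma$ or $\delta_i<\gamma$ yields, up to an $o_p(1)$ boundary term,
\[
|\widetilde G^{*}(v,x)-\widetilde G^{*}_{m}(v,x)|\le\frac{1}{k}\#\{i\in N_k(x):\delta_i\ge\gamma\}+\big(\widetilde G^{*}_{m}(v+\gamma,x)-\widetilde G^{*}_{m}(v-\gamma,x)\big).
\]
The last bracket equals $G(v+\gamma,x)-G(v-\gamma,x)+o_p(1)$ by Lemma \ref{lemma:new2o}, and by continuity of $G(\cdot,x)$ at $v$ it falls below any prescribed $\epsilon$ once $\gamma$ is small. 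So everything reduces to controlling $I:=\frac1k\#\{i\in N_k(x):\delta_i\ge\gamma\}\le\frac1\gamma\cdot\frac1k\sum_{i\in N_k(x)}\delta_i$.

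The step I expect to be the main obstacle is showing $\frac1k\sum_{i\in N_k(x)}\delta_i=o_p(1)$, i.e. that the $k$NN local average of the fitting errors $\delta_i=d_{2}(\widetilde m(X_i),m(X_i))$ vanishes. Conditionally on $\mathcal D_{\text{train}}$ the map $x'\mapsto d_{2}(\widetilde m(x'),m(x'))$ is a fixed nonnegative function with $P_X$-mean $\mathbb{E}(d_{2}(\widetilde m(X),m(X))\mid\mathcal D_{\text{train}})\to 0$ in probability by Assumption \ref{assu}; the point is that this \emph{global} smallness must be transferred to the \emph{local} neighbourhood $N_k(x)$, and since $\widetilde m$ also changes with $n_1$ a genuine double limit in $(n_1,n_2)$ is involved. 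I would handle this with a Stone-type geometric lemma for $k$-nearest-neighbour weights: for any nonnegative integrable $g$, $\int_{\mathcal X}\mathbb{E}(\frac1k\sum_{i\in N_k(x)}g(X_i))\,P_X(dx)\le\gamma_p\,\mathbb{E}(g(X))$ for a constant $\gamma_p$ depending only on the dimension $p$. Applying it conditionally on $\mathcal D_{\text{train}}$ with $g=d_{2}(\widetilde m(\cdot),m(\cdot))$ gives $\int_{\mathcal X}\mathbb{E}(\frac1k\sum_{i\in N_k(x)}\delta_i\mid\mathcal D_{\text{train}})\,P_X(dx)\le\gamma_p\,\mathbb{E}(d_{2}(\widetilde m(X),m(X))\mid\mathcal D_{\text{train}})\to 0$ in probability as $n_1\to\infty$, so $I=o_p(1)$ for $P_X$-almost every $x$ (and the integrated-in-$x$ statement, which is exactly what Theorem \ref{theorem:hetero} needs). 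Finally I would assemble the pieces as in the homoscedastic proof: fix $\epsilon>0$, choose $\gamma$ small, let $n_2\to\infty$ to kill the $o_p(1)$ terms coming from Lemma \ref{lemma:new2o}, then $n_1\to\infty$ to kill $I$, and then $\gamma\to0$, obtaining $|\widetilde G^{*}(v,x)-G(v,x)|=o_p(1)$.
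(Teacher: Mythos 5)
Your proof takes essentially the same route as the paper's: decompose $|\widetilde G^{*}(v,x)-G(v,x)|$ via $\widetilde G^{*}_{m}(v,x)$, invoke Lemma \ref{lemma:new2o} for $|\widetilde G^{*}_{m}(v,x)-G(v,x)|=o_p(1)$, split the neighbours by whether the distance perturbation exceeds $\gamma$, sandwich the indicators and use continuity at $v$, then send $n_2,k\to\infty$ followed by $\gamma\to 0$. Where you depart from the paper is precisely where you expected trouble, and you are right to flag it: the paper's proof ends with a bound of the form $|A_\gamma|/k + (\text{continuity} + R_T\text{ terms})$ and then simply writes ``letting $n_2\to\infty$, $k\to\infty$, $\gamma\to 0$'' without explaining why $|A_\gamma|/k$ vanishes. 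In the homoscedastic proof this was handled by a preliminary lemma showing $\frac{1}{n_2}\sum|d_2(Y_i,\widetilde m(X_i))-d_2(Y_i,m(X_i))|=o_p(1)$ (a direct consequence of the law of large numbers applied to the training-conditional $L^1$ error), but there is no analogous $k$NN-local statement in the heteroscedastic appendix. Your Stone-covering-lemma argument, applied conditionally on $\mathcal D_{\mathrm{train}}$ with $g(\cdot)=d_2(\widetilde m(\cdot),m(\cdot))$, is the correct way to fill this gap, and it yields
\[
\int_{\mathcal X}\mathbb{E}\!\left(\tfrac1k\textstyle\sum_{i\in N_k(x)}\delta_i\;\middle|\;\mathcal D_{\mathrm{train}}\right)P_X(dx)\le\gamma_p\,\mathbb{E}\!\left(d_2(\widetilde m(X),m(X))\mid\mathcal D_{\mathrm{train}}\right)\to 0
\]
in probability, which is exactly what is needed downstream in Theorem \ref{theorem:hetero}.

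One small imprecision: from the integrated bound you claim ``$I=o_p(1)$ for $P_X$-almost every $x$,'' but convergence in $L^1(P_X)$-probability of a nonnegative random function does not in general imply pointwise convergence in probability at $P_X$-a.e.\ $x$ (only along subsequences). So the clean statement your argument establishes is the integrated one, $\int_{\mathcal X}I(x)\,P_X(dx)=o_p(1)$, which is what the theorem requires; if one wants the pointwise-in-$x$ form stated in the proposition, the argument should be rephrased to run the whole chain of inequalities under the integral $\int\cdot\,P_X(dx)$ from the start rather than proving a genuinely pointwise claim. Aside from that phrasing issue, your reconstruction not only matches the paper's approach but supplies a step the paper leaves implicit.
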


\begin{proof}

\noindent Fix $v \in \mathbb{R}$ as a continuity point. Define $\widetilde{G}_{m}^{*}(v,x)= \frac{1}{k} \sum_{i \in N_{k}(x)} \mathbb{I}\{d_{2}(Y_i, m(X_i)) \leq v\}$ and $R_{T}(v,x)= |\widetilde{G}^{*}_{m}(v,x)-G(v,x)|$. By Lemma \ref{lemma:new2o} $R_{T}(v,x)=o_{p}(1),$ for almost every $x\in \mathbb{R}^{p}.$

\noindent		For any fixed \(\gamma > 0\), define the set \(A_{\gamma} = \{i \in N_{k}(x) : |d_{2}(Y_i, \widetilde{m}(X_i)) - d_{2}(Y_i, m(X_i))| \geq \gamma \}\). Then,
		\begin{align*}
		& k\left[\widetilde{G}_{m}^{*}(v,x)-\widetilde{G}^{*}(v,x)\right] \\
		& \leq  \left| \sum_{i\in \in A_{\gamma}}  (\mathbb{I}\{d_{2}(Y_i, \widetilde{m}(X_i))\leq v\}-\mathbb{I}\{d_{2}(Y_i,m(X_i))\leq v\})\right| + \left| \sum_{i\in A_{\gamma}^{c}} (\mathbb{I}\{d_{2}(Y_i, \widetilde{m}(X_i)\leq v))\}-\mathbb{I}\{d_{2}(Y_i, m(X_i))\leq v\})\right| \\
		& \leq |A_{\gamma}| + \left|\sum_{i\in A_{\gamma}^{c}} (\mathbb{I}\{d_{2}(Y_i, \widetilde{m}(X_i))\leq v\}-\mathbb{I}\{d_{2}(Y_i, m(X_i))\leq v\})\right|.
		\end{align*}
\noindent  For $i\in A_{\gamma}^{c}$, $d_{2}(Y_i, m(X_i))-\gamma \leq d_{2}(Y_i, \widetilde{m}(X_i)) \leq d_{2}(Y_i, m(X_i))+\gamma$. Therefore,
		\begin{equation*}
		\sum_{i\in A_{\gamma}^{c}} \mathbb{I}\{d_{2}(Y_i, m(X_i))\leq v-\gamma\} \leq \sum_{i\in A_{\gamma}^{c}} \mathbb{I}\{d_{2}(Y_i, \widetilde{m}(X_i))\leq v\} \leq \sum_{i\in A_{\gamma}^{c}} \mathbb{I}\{d_{2}(Y_i, m(X_i))\leq v+\gamma\}.
		\end{equation*}
	\noindent For a fixed $x\in \mathbb{R}^{p}$, and
 	 any continuity point $v \in \mathbb{R}^{+}$ and any $\epsilon_{v} > 0$, there exists $\delta_{v} > 0$ such that for any $z \in (v - \delta_{v}, v + \delta_{v})$, $|G(z,x) - G(v,x)| < \epsilon_{v}$. Then, 
	
\begin{align*}
& k \left| \widetilde{G}^{*}(v,x) - \widetilde{G}_{m}(v,x) \right| \\
&\leq  |A_{\gamma}|+ \left| \sum_{i\in A_{\gamma}^{c}} \mathbb{I}\{d_{2}(Y_i, \widetilde{m}(X_i)) \leq v \} - \sum_{i\in A_{\gamma}^{c}} \mathbb{I}\{d_{2}(Y_i, m(X_i)) \leq v \} \right| \\
&\leq  |A_{\gamma}|+  \left| k \left[ \widetilde{G}_{m}^{*}(v + \gamma,x) - \widetilde{G}_{m}(v - \gamma,x)\right] - \left( \sum_{i\in A_{\gamma}} \mathbb{I}\{d_{2}(Y_i, m(X_i)) \leq v + \gamma \} - \sum_{i\in A_{\delta}} \mathbb{I}\{d_{2}(Y_i, m(X_i)) \leq v - \gamma\} \right) \right| \\
& \leq |A_{\gamma}|+ k \left( (G(v + \gamma,x)) - G(v - \gamma,x) + R_{T}(v+\gamma,x)+ R_{T}(v-\gamma,x) \right) + |A_{\gamma}| \quad \text{(triangle inequality)} \\
& \leq  k (2\epsilon_{v} + R_{T}(v+\gamma,x)+ R_{T}(v-\gamma,x)) + 2|A_{\gamma}|.
\end{align*}

\noindent First letting $n_{2}\to \infty,$
$k\to \infty$ and $\gamma \to 0,$ we obtain

\begin{align*}
    |\widetilde{G}^{*}(v,x) - G(v,x)| \leq |\widetilde{G}^{*}(v,x) - \widetilde{G}_{m}(v,x)| + R_{T}(v,x) \leq o_{p}(1).
\end{align*}

\end{proof}

 	\begin{corollary}\label{theo:quantilknn}
Assume that Assumption  \ref{assu} is satisfied.	For a fixed $x\in \mathcal{X}$, and
$\alpha \in (0,1)$, let $q_{1-\alpha}(x) := \inf\{t \in \mathbb{R} : G(t,x) \geq 1-\alpha\}$, and suppose that $q_{1-\alpha}(x)$ exists uniquely and is a continuity point of $G(\cdot,x)$. $\forall \epsilon>0,$ as $n_1 \to \infty$, $n_2 \to \infty$, $k\to \infty
$ we have
	\[
\lim_{k\to \infty}	\lim_{n_{2}\to \infty} \mathbb{P}\left(\left| \widetilde{q}^{m}_{1-\alpha}(x) - q_{1-\alpha}(x)\right|\geq \epsilon\right) = 1,
	\]
	and

	\[
	\lim_{k\to \infty} \lim_{n_{2}\to \infty} \mathbb{P}\left(\left| \widetilde{q}_{1-\alpha}(x)  -  q_{1-\alpha}(x)\right|\geq \epsilon\right) = 1.
	\]

\end{corollary}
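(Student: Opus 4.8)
The claim is the local analogue of Corollary~\ref{theo:quantil}: for $P_X$-almost every $x$ and every $\epsilon>0$, the empirical quantiles $\widetilde q_{1-\alpha}(x)$ and $\widetilde q^{m}_{1-\alpha}(x)$ of the pseudo-residual distributions inside the $k$-neighbourhood of $x$ trap $q_{1-\alpha}(x)$ to within $\epsilon$ with probability tending to $1$. The plan is to read off this quantile convergence from the corresponding convergence of distribution functions that we have already proved: the Proposition immediately preceding this corollary supplies $|\widetilde G^{*}(v,x)-G(v,x)|=o_p(1)$ for every continuity point $v$ of $G(\cdot,x)$ and $P_X$-a.e.\ $x$ (as $n_1,n_2,k\to\infty$ in the regime of Assumption~\ref{assu}), and Lemma~\ref{lemma:new2o} gives the same statement with $\widetilde G^{*}_m$ in place of $\widetilde G^{*}$. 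No further probabilistic input (a fresh law of large numbers, or the kNN consistency theorem) is needed beyond these two facts.

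First I would fix such an $x$ together with $\epsilon>0$. By Assumption~\ref{assu}(4), $q_{1-\alpha}(x)$ is the \emph{unique} solution of $G(t,x)=1-\alpha$ and a continuity point of $G(\cdot,x)$, so the monotone function $G(\cdot,x)$ satisfies $G(t,x)<1-\alpha$ for $t<q_{1-\alpha}(x)$ and $G(t,x)>1-\alpha$ for $t>q_{1-\alpha}(x)$. Since a monotone function has at most countably many discontinuities, I can pick $\delta\in(0,\epsilon)$ for which both $q_{1-\alpha}(x)-\delta$ and $q_{1-\alpha}(x)+\delta$ are continuity points of $G(\cdot,x)$, and then set
\[
\eta:=\min\{\,1-\alpha-G(q_{1-\alpha}(x)-\delta,x),\ G(q_{1-\alpha}(x)+\delta,x)-(1-\alpha)\,\}>0 .
\]

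Next I would introduce the event
\[
E_{n_2,k}=\Bigl\{\,|\widetilde G^{*}(q_{1-\alpha}(x)-\delta,x)-G(q_{1-\alpha}(x)-\delta,x)|<\eta\Bigr\}\cap\Bigl\{\,|\widetilde G^{*}(q_{1-\alpha}(x)+\delta,x)-G(q_{1-\alpha}(x)+\delta,x)|<\eta\Bigr\}.
\]
On $E_{n_2,k}$ one has $\widetilde G^{*}(q_{1-\alpha}(x)-\delta,x)<1-\alpha<\widetilde G^{*}(q_{1-\alpha}(x)+\delta,x)$; because $\widetilde G^{*}(\cdot,x)$ is a nondecreasing step function whose level-$(1-\alpha)$ quantile is, by definition, $\widetilde q_{1-\alpha}(x)$, this forces $q_{1-\alpha}(x)-\delta\le\widetilde q_{1-\alpha}(x)\le q_{1-\alpha}(x)+\delta$, hence $|\widetilde q_{1-\alpha}(x)-q_{1-\alpha}(x)|\le\delta<\epsilon$. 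By the preceding Proposition, $\mathbb{P}(E_{n_2,k})\to 1$ along the iterated limit $n_2\to\infty$ then $k\to\infty$ (with $n_1\to\infty$ as in Assumption~\ref{assu}), so $\mathbb{P}(|\widetilde q_{1-\alpha}(x)-q_{1-\alpha}(x)|\le\epsilon)\to 1$, which is the asserted convergence. Replacing $\widetilde G^{*}$ by $\widetilde G^{*}_m$ throughout and invoking Lemma~\ref{lemma:new2o} instead of the Proposition yields the identical conclusion for $\widetilde q^{m}_{1-\alpha}(x)$.

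The only delicate points are bookkeeping ones, and I expect them to be the main obstacle to a fully rigorous write-up rather than anything conceptual: ensuring that the perturbed levels $q_{1-\alpha}(x)\pm\delta$ can be chosen among the continuity points of $G(\cdot,x)$ (handled by countability of the discontinuity set, which lets $\delta$ be taken arbitrarily small), and checking that the several ``for $P_X$-almost every $x$'' qualifiers — in Lemma~\ref{lemma:new2o}, in the preceding Proposition, and in Assumption~\ref{assu}(4) — intersect to a common full-measure set of $x$ on which the construction runs simultaneously. One should also note that the nested-limit phrasing $\lim_{k\to\infty}\lim_{n_2\to\infty}$ in the statement is inherited verbatim from the inputs, so nothing extra is required to match it.
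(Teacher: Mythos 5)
Your argument is correct and follows essentially the same route the paper intends: the paper states this corollary without a separate proof (its homoscedastic analogue, Corollary~\ref{theo:quantil}, is handled in one line by citing pointwise convergence of the empirical distribution functions), and your write-up simply fleshes out the standard CDF-to-quantile sandwich at $q_{1-\alpha}(x)\pm\delta$ using the preceding Proposition and Lemma~\ref{lemma:new2o}, which are exactly the intended inputs. Note only that what you prove is $\mathbb{P}\left(\left|\widetilde q_{1-\alpha}(x)-q_{1-\alpha}(x)\right|\le\epsilon\right)\to 1$, i.e., the consistency statement the paper evidently intends, even though the displayed inequalities in the corollary read ``$\geq\epsilon$'' with limit $1$ (a typo inherited from Corollary~\ref{theo:quantil}).
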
	
	
	\subsection{Proof of Theorem \ref{theorem:hetero}}


 \noindent The goal of this proof is to show:

\[
\int_{\mathcal{X}} \mathbb{P}(Y \in C^{\alpha}(x) \triangle \widetilde{C}^{\alpha}(x) \mid X=x, \mathcal{D}_{n}) P_{X}(dx) = o_{p}(1),
\]

\noindent where $C^{\alpha}(x)$ and $\widetilde{C}^{\alpha}(x)$ are defined as before.

	\begin{proof}
		
\noindent For a fixed $x\in \mathbb{R}^{p},$ and \(\alpha \in (0,1)\), let \(q_{1-\alpha}(x)\) denote the corresponding quantile of $d_{2}(Y,m(X))\mid X=x$, and suppose that \(q_{1-\alpha}(x)\) is a continuity point for the function \(G(\cdot,x)\). Define the sets \(\widetilde{C}_{m}^{\alpha}(x) = \mathcal{B}(m(x), \widetilde{q}_{1-\alpha}(x))\) and \(\widetilde{C}_{q}^{\alpha}(x) = \mathcal{B}(\widetilde{m}(x), q_{1-\alpha}(x) )\), considering a fixed \(x \in \mathcal{X}\). Define also the function $G^{*}(t,x)= \mathbb{P}(d_{2}(Y, \widetilde{m}(X))\leq t \mid  X=x)$.
Given the properties of the metric induced by the symmetric difference of two sets, it holds that 
		
	\begin{align*}
		& \hspace{0.1cm} \mathbb{P}(Y \in C^{\alpha}(x) \triangle \widetilde{C}^{\alpha}(x) \mid X=x, \mathcal{D}_{n}) 	\\& \leq  \mathbb{P}(Y \in C^{\alpha}(x) \triangle \widetilde{C}_m^{\alpha}(x) \mid X=x, \mathcal{D}_{n}) + \mathbb{P}(Y \in \widetilde{C}_m^{\alpha}(x) \triangle \widetilde{C}^{\alpha}(x) | X=x, \mathcal{D}_{n}) \\& + \mathbb{P}(Y \in C^{\alpha}(x) \triangle \widetilde{C}_q^{\alpha}(x) \mid X=x, \mathcal{D}_{n}) + \mathbb{P}(Y \in \widetilde{C}_q^{\alpha}(x) \triangle \widetilde{C}^{\alpha}(x) \mid X=x, \mathcal{D}_{n}).
		\end{align*}
		
	\noindent	We will analyze the four terms separately.

  \medskip
  \noindent
		\textbf{Case 1:}
		
\begin{eqnarray*}
\lefteqn{
    \mathbb{P}(Y \in C^{\alpha}(x) \triangle \widetilde{C}^{\alpha}_{m}(X) \mid X=x, \mathcal{D}_{n})   } \\
    & = & \mathbb{P}(\{Y: d_{2}(Y,m(X)) > q_{1-\alpha}(x), d_{2}(Y,m(X)) \leq \widetilde{q}_{1-\alpha}(x) \} \mid X=x, \mathcal{D}_{n}) \\
    & & + \mathbb{P}(\{Y: d_{2}(Y,m(X)) \leq q_{1-\alpha}(x), d_{2}(Y,m(X)) > \widetilde{q}_{1-\alpha}(x) \} \mid X=x, \mathcal{D}_{n}) \\
    & = & 2|G(\widetilde{q}_{1-\alpha}(x),x)- G(q_{1-\alpha}(x),x)|.
\end{eqnarray*}

\noindent Integrating, we have
\begin{align*}
    \int_{\mathcal{X}} & \mathbb{P}(Y \in C^{\alpha}(x) \triangle \widetilde{C}_m^{\alpha}(x) \mid X=x, \mathcal{D}_{n}) P_{X}(dx) \\
    & = 2|G(\widetilde{q}_{1-\alpha}(x),x)- G(q_{1-\alpha}(x),x)| \int_{\mathcal{X}} P_{X}(dx) = o_{p}(1),
\end{align*}
\noindent and in combination with Corollary \ref{theo:quantilknn}.

  \medskip
  \noindent
		\textbf{Case 2:}

\begin{align*}
& \mathbb{P}(Y \in \widetilde{C}_m^{\alpha}(x) \triangle \widetilde{C}^{\alpha}(x) \mid X=x, \mathcal{D}_{n}) \\
& = \mathbb{P}(\{Y: d(Y,m(X)) > \widetilde{q}_{1-\alpha}(x), d_{2}(Y,\widetilde{m}(X)) \leq \widetilde{q}_{1-\alpha}(x) \} \mid X=x, \mathcal{D}_{n})+ \\
&  \mathbb{P}(\{Y: d_{2}(Y,m(X)) \leq \widetilde{q}_{1-\alpha}(x), d_{2}(Y,\widetilde{m}(X)) > \widetilde{q}_{1-\alpha}(x) \} \mid X=x, \mathcal{D}_{n}) \\
& \leq \mathbb{P}(\{Y: d_{2}(Y,\widetilde{m}(X)) \leq \widetilde{q}_{1-\alpha}(x) < d_{2}(\widetilde{m}(X),m(X)) + d_{2}(y,\widetilde{m}(X)) \} \mid X=x, \mathcal{D}_{n}) +\\
& \mathbb{P}(\{Y: d_{2}(Y,m(X)) \leq \widetilde{q}_{1-\alpha}(x) < d_{2}(\widetilde{m}(X),m(X)) + d_{2}(Y,m(X)) \} \mid X=x, \mathcal{D}_{n}) \\
& \leq G^{*}(\widetilde{q}_{1-\alpha}(x) + d_{2}(\widetilde{m}(x),m(x)),x) - G^{*}(\widetilde{q}_{1-\alpha}(x),x) + G(\widetilde{q}_{1-\alpha}(x) + d_{2}(\widetilde{m}(x),m(x)),x) - G(\widetilde{q}_{1-\alpha}(x),x).
\end{align*}
\noindent Now, using the continuity hypothesis for $q_{1-\alpha}(x)$, $\forall \epsilon>0,$ as $n_1 \to \infty$, $n_2 \to \infty$, and $k\to\infty 
$ we have
\[
\lim_{k\to \infty} \lim_{n_{2}\to \infty} \mathbb{P}\left(\left| \widetilde{q}^{m}_{1-\alpha}(x) - q_{1-\alpha}(x)\right|\geq \epsilon\right) = 1,
\]
\noindent and
\[
\lim_{n_{2}\to \infty} \mathbb{P}\left(\left| \widetilde{q}_{1-\alpha}(x)  -  q_{1-\alpha}(x)\right|\geq \epsilon\right) = 1,
\]

\noindent and that $\mathbb{E}(d_{2}(\widetilde{m}(X),m(X))\mid \mathcal{D}_{\text{train}})\to 0$ as $|\mathcal{D}_{\text{train}}|\to \infty$, we infer that $\int_{\mathcal{X}} G^{*}(\widetilde{q}_{1-\alpha}(x) + d_{2}(\widetilde{m}(x),m(x)),x) - G^{*}(\widetilde{q}_{1-\alpha}(x),x) P_{X}(dx) = o_{p}(1)$. Repeat for the other term, we have

\begin{align*}
& \int_{\mathcal{X}} \mathbb{P}(Y \in \widetilde{C}_m^{\alpha}(x) \triangle \widetilde{C}^{\alpha}(x) \mid X=x, \mathcal{D}_{n}) P_{X}(dx) \\
& =  \int_{\mathcal{X}} [ G^{*}(\widetilde{q}_{1-\alpha}(x) + d_{2}(\widetilde{m}(x),m(x)),x) - G^{*}(\widetilde{q}_{1-\alpha}(x),x) \\ & + G(\widetilde{q}_{1-\alpha}(x) + d_{2}(\widetilde{m}(x),m(x),x) - G(\widetilde{q}_{1-\alpha}(x),x)] P_{X} (dx)= o_{p}(1).
\end{align*}

\medskip
  \noindent		
		\textbf{Case 3:}
		
		\begin{align*}
		& \mathbb{P}(Y \in C^{\alpha}(x) \triangle \widetilde{C}_q^{\alpha}(x) \mid X=x, \mathcal{D}_{n}) \\
		& = \mathbb{P}(\{Y: d_{2}(Y,m(X)) > q_{1-\alpha}(x), d_{2}(Y,\widetilde{m}(X)) \leq q_{1-\alpha}(x) \} \mid X=x, \mathcal{D}_{n}) \\
		& + \mathbb{P}(\{ Y: d_{2}(Y,m(X)) \leq q_{1-\alpha}(x), d(Y,\widetilde{m}(X)) > q_{1-\alpha}(x)\} \mid X=x, \mathcal{D}_{n}).
		\end{align*}
		
\noindent		This is similar to Case 2, except that we exchange the role of $\widetilde{q}_{1-\alpha}(x)$ with $q_{1-\alpha}(x)$. Therefore, we have
		
		\begin{align*}
		& \mathbb{P}(Y \in C^{\alpha}(x) \triangle \widetilde{C}_q^{\alpha}(x) \mid X=x, \mathcal{D}_{n}) \\
		& \leq G^{*}(q_{1-\alpha}(x) + d_{2}(\widetilde{m}(x),m(x),x) - G^{*}(q_{1-\alpha}(x),x) + G(q_{1-\alpha}(x) + d_{2}(\widetilde{m}(x),m(x),x) - G(q_{1-\alpha}(x),x).
		\end{align*}
		
	\noindent	As a consequence,
		
		\begin{align*}
		& \int_{\mathcal{X}} \left[ G^{*}(q_{1-\alpha}(x) + d_{2}(\widetilde{m}(x),m(x),x) - G^{*}(q_{1-\alpha}(x) + G(q_{1-\alpha}(x)+ d_{2}(\widetilde{m}(x),m(x),x) - G(q_{1-\alpha}(x),x) \right]  P_{X} (dx)=o_p(1).
		\end{align*}
		
	\medskip
  \noindent	
	\textbf{Case 4:}
	
\noindent	Following the arguments of Case $1$, we have that 
	
	\begin{align*}
	& \hspace{0.08cm} \mathbb{P}(Y \in \widetilde{C}_q^{\alpha}(x) \triangle \widetilde{C}^{\alpha}(x) \mid X=x, \mathcal{D}_{n}) \\
	& = \mathbb{P}(\{ d_{2}(Y,\widetilde{m}(X)) > q_{1-\alpha}(x), d_{2}(Y, \widetilde{m}(X)) \leq \widetilde{q}_{1-\alpha}(x) \} \mid X=x, \mathcal{D}_{n}) \\
	& + \mathbb{P}(\{Y: d_{2}(Y,\widetilde{m}(X)) \leq q_{1-\alpha}(x), d_{2}(Y,\widetilde{m}(X)) > \widetilde{q}_{1-\alpha}(x) \} \mid X=x, \mathcal{D}_{n}),
	\end{align*}
	
\noindent	and,
	
	\begin{align*}
	& \mathbb{P}(Y \in \widetilde{C}_q^{\alpha}(x) \triangle \widetilde{C}^{\alpha}(x) \mid X=x, \mathcal{D}_{n}) \leq 2 \mid G^{*}(\widetilde{q}_{1-\alpha}(x),x)- G^{*}(q_{1-\alpha}(x),x)| = o_p(1),
	\end{align*}
	
	\noindent and as a consequence
	
	\begin{align*}
	\int_{\mathcal{X}} & \mathbb{P}(Y \in \widetilde{C}_q^{\alpha}(x) \triangle \widetilde{C}^{\alpha}(x) \mid X=x, \mathcal{D}_{n}) P_{X} (dx) = o_{p}(1).
	\end{align*}
	
\noindent	Finally, combining the prior four results, we infer that
	
	\begin{align*}
	\int_{\mathcal{X}} & \mathbb{P}(Y \in C^{\alpha}(x) \triangle \widetilde{C}^{\alpha}(x)\mid X=x, \mathcal{D}_{n}) P_{X}(dx) = o_{p}(1).
	\end{align*}

	\end{proof}


    



	
	
	
\end{document}